\documentclass[11pt, reqno]{amsart}
\usepackage{amssymb}
\usepackage{times}
\usepackage{amsmath,amsthm}
\usepackage{amsfonts}
\usepackage{leftidx}
\usepackage{mathrsfs}
\usepackage{enumerate}
\usepackage{abstract}
\usepackage{stmaryrd}
\usepackage{ulem}

\usepackage{graphicx}
\usepackage{float}
\usepackage{hyperref}
\synctex=-1

\def\R{\mathbb{R}}

\def\d{|\nabla|}
\def\n{\nabla}

\def\p{\partial}

\def\be{\begin{equation}}
\def\ee{\end{equation}}
\def\vo{\vspace{1\baselineskip}}

\def\h{\frac{1}{2}}
\def\ta{\thickapprox}

\newtheorem{theorem}{Theorem}[section]

\newtheorem{lemma}[theorem]{Lemma}
\newtheorem{proposition}[theorem]{Proposition}
\theoremstyle{definition}
\newtheorem{definition}[theorem]{Definition}

\theoremstyle{remark}
\newtheorem{remark}[theorem]{Remark}

\setlength{\textwidth}{16cm} \setlength{\oddsidemargin}{0cm}
\setlength{\evensidemargin}{0cm}
\setcounter{tocdepth}{2}
\numberwithin{equation}{section}
\setcounter{tocdepth}{1}
\begin{document}
 \title[3D Water Waves Above Flat Bottom]{On 3D water waves system above a flat bottom}
\author{Xuecheng Wang}
\address{Mathematics Department, Princeton University, Princeton, New Jersey,08544, USA}
\email{xuecheng@math.princeton.edu}
\thanks{}
\maketitle
\begin{abstract}
As a starting point of studying the long time behavior of the $3D$ water waves system in the flat bottom setting, in this paper,  we try to improve the understanding of the  Dirichlet-Neumann operator in this setting. As an application, we  study the $3D$ gravity   waves system and derive a new $L^2-L^\infty$ type energy estimate, which has a good structure in the $L^\infty-$type space. In our second paper \cite{wang2},  base  on the results we  obtained in this paper, we prove the  global regularity of the $3D$ gravity waves system for suitably small initial data.

\end{abstract}

\tableofcontents

\section{Introduction}
\subsection{The full water waves system above a flat bottom} 
We are interested in the long time behavior of the $3D$ water waves system  for suitably small initial data in the flat bottom  setting. 

The water waves system describes the evolution of an  inviscid incompressible fluid with constant density (e.g., water) inside a time dependent  water region   $\Omega(t)$, which has a free interface $\Gamma(t)$  and a fixed flat bottom $\Sigma$.  Above the domain $\Omega(t)$, it is vacuum.

Without loss of generality, we normalize  the depth of $\Omega(t)$ to be $1$. In the Eulerian coordinates system,   we can represent the domain ``$\Omega(t)$'', the interface ``$\Gamma(t)$'' and the bottom ``$\Sigma$''  as follows, 
\[
\Omega(t):= \{(x,y): x\in \R^2, -1\leq y\leq \,h(t,x)\},\]
\[ \Gamma(t):= \{(x,y): x\in \R^2, y= \,h(t,x)\}, \quad \Sigma :=\{(x,y): x\in \R^2, y=-1\}.
\]
We remark that, for the case we are considering, the size of $h(t,\cdot)$ will be small all   time.

We assume that the velocity field is  irrotational. The evolution of fluid is subjected to the gravity effect or the surface tension effect. We can describe the evolution of fluid   by the Euler equation as follows, 
\begin{equation}
\left\{\begin{array}{l}
\p_t u + u\cdot \nabla u =-\nabla p -g(0,0,1),\\
\\
\nabla\cdot u =0,\,\, \nabla \times u =0, \,\,u(0)=u_0,\\
\end{array}\right.
\end{equation}
where  ``$g$'' denotes the gravity effect constant.

Moreover, we have following boundary conditions,
\begin{equation}\label{boundaryconditions}
\left\{\begin{array}{lr}
u\cdot \vec{\mathbf{n}}=0 & \textup{on\,\,$\Sigma$},\\

P=\sigma H(h) & \textup{on\,\, $\Gamma(t),$}\\

\p_t + u\cdot\nabla \textup{tangents to $\cup_{t} \Gamma(t)$} & \textup{on\, $\Gamma(t)$,}
\end{array}\right.
\end{equation}
where $\sigma$ denotes the surface tension coefficient and ``$H(h)$ '' denotes the mean curvature of the interface, which is given as follows,
\[
H(h)= \nabla\cdot \Big(\frac{\nabla h}{\sqrt{1+|\nabla h|^2}}\Big).
\]

The first boundary condition in (\ref{boundaryconditions}) means that the fluid cannot go through the fixed bottom. The second boundary condition in (\ref{boundaryconditions})  comes from the Young-Laplace equation for the pressure. The third boundary condition in (\ref{boundaryconditions}) represents the kinematic boundary condition, which says that the free interface moves with the normal component of the velocity.

Recall that the velocity field is irrotational.  Hence, we can represent it in terms of a velocity potential $\phi$. We use $\psi$ to denote the restriction of the velocity potential on the boundary $\Gamma(t)$, i.e., $\psi(t,x):=\phi(t,x,\,h(t,x))$. From the  incompressible condition and boundary conditions, we can derive the following Laplace equation with two boundary conditions: Neumann type on the bottom and Dirichlet type on the interface,
\begin{equation}\label{harmoniceqn}
(\Delta_x + \p_y^2)\phi=0, \quad \frac{\p \phi}{\p \vec{\mathbf{n}}}\big|_{\Sigma}=0, \quad \phi\big|_{\Gamma(t)} = \psi.
\end{equation}

Hence, we can reduce (for example, see \cite{zakharov}) the motion of fluid inside the water region $\Omega(t)$ to the  evolution of height $\,h$ and the restricted velocity potential $\psi$ on the interface $\Gamma(t)$   as follows,
\begin{equation}\label{fullsystem}
\left\{\begin{array}{l}
\p_t \,h= G(\,h)\psi,\\
\\
\p_t \psi = - g h + \sigma H(h) - \frac{1}{2} |\nabla \psi|^2 + \displaystyle{\frac{(G(\,h)\psi + \n \,h\cdot\n \psi)^2}{2(1+ |\n\,h|^2)}},
\end{array}\right.
\end{equation}
where $G(\,h)\psi= \sqrt{1+|\n\,h|^2}\mathcal{N}(\,h)\psi$  and $\mathcal{N}(\,h)\psi$ is the Dirichlet-Neumann operator on the interface.

 The system (\ref{fullsystem}) has the following conservation law
\[
\mathcal{H}(\,h(t),\psi(t)) := \Big[ \int \frac{1}{2}\psi(t) G(\,h(t))\psi(t) +\frac{g}{2}  |\,h(t)|^2 + \frac{\sigma |\nabla h(t)|^2}{1+\sqrt{1+|\nabla h(t)|^2}}\Big]= \mathcal{H}(\,h(0), \psi(0)).
\] 

Intuitively speaking, after diagonalizing the system (\ref{fullsystem}), we find ourself dealing with the following type of  quasilinear dispersive  equation,
\begin{equation}\label{dispersivefullsystem}
(\p_t + i \widetilde{\Lambda}) u = \mathcal{N}(u, \nabla u),\quad \widetilde{\Lambda} = \sqrt{|\nabla| \tanh(|\nabla|)(g+\sigma |\nabla|^2)}, \quad u = \,h + i \widetilde{\Lambda}^{-1} |\nabla|\tanh{|\nabla|}\psi,
\ee
\be\label{diagonalizedvariable}
 u: \mathbb{R}_t \times \mathbb{R}_x^2\longrightarrow \mathbb{C}.
\end{equation}
Readers can temporarily take (\ref{dispersivefullsystem})   as granted. It would be much clearer after we obtain the linear term of the Dirichlet-Neumann operator, which is $\d\tanh\d \psi$, in section \ref{DNoperator}.

\subsection{Motivation and the main result of this paper.}
 Note that  the best decay rate that one can expect for a $2D$ dispersive equation is $1/t$, which is critical to establish the global regularity for small initial data.  

  To know the long time behavior,  it is crucial  to know what type of quadratic term we are dealing with.   Unfortunately, to the best of author's knowledge, there is no previous work that addresses this issue. It  motivates us to study this problem in this paper.

To identify the quadratic terms, it requires more careful analysis of the Dirichlet-Neumann operator in the flat bottom setting. Note that the water waves system  in the Eulerian coordinates  formulation (\ref{fullsystem}) is dimensionless.  Since we don't want to limit our scope to the $3$D setting, in this paper,  we will identify structures inside the Dirichlet-Neumann operator as many as possible.

To help readers understand what they will read about the  Dirichlet-Neumann operator in this paper, we   summarize and explain several important properties of  the  Dirichlet-Neumann operator here.  Those understandings will play   important roles in the long time behavior. They are listed as follows,

(i) Unlike the infinite depth setting,   in the flat bottom setting, we don't have   the null structure in the low frequency part.
  More precisely, if the frequencies of two inputs are $1$ and $0$ respectively, then   the size of symbol is ``$1$'' (flat bottom setting)  instead of ``$0$''(infinite depth setting).

 We remark that the principle symbol  of the  Dirichlet-Neumann operator  in the flat bottom setting is still same as in the infinite depth setting. Intuitively speaking, the high frequency parts of the  Dirichlet-Neumann operator in  two setting are almost same.

   (ii)   We give the explicit  formula of the quadratic terms of the Dirichlet-Neumann operator, which provides the first step to study the long time behavior of (\ref{dispersivefullsystem}).

(iii) We formulate the cubic and higher order terms of Dirichlet-Neumann operator   in a fixed point type formulation, which provides a good way to control the high order terms over time.

As a starting point and an example, we study  a specific  setting  of the water waves  system (\ref{fullsystem}), which is the gravity water waves system. More precisely, we consider the gravity effect and neglect the surface tension effect.  After normalizing the gravity effect constant ``$g$ ''to be ``$1$'',  the system (\ref{fullsystem}) is reduced as follows,
\begin{equation}\label{waterwave}
\left\{\begin{array}{l}
\p_t \,h= G(\,h)\psi,\\
 
\p_t \psi = -  h  - \frac{1}{2} |\nabla \psi|^2 + \displaystyle{\frac{(G(\,h)\psi + \n \,h\cdot\n \psi)^2}{2(1+ |\n\,h|^2)}}.
\end{array}\right.
\end{equation}

Correspondingly, the diagonalized equation  (\ref{dispersivefullsystem})    is reduced to the 
    quasilinear dispersive  equation as follows,
\begin{equation}\label{dispersive}
(\p_t + i \Lambda) u = \mathcal{N}(u, \nabla u),\quad \Lambda = \sqrt{|\nabla| \tanh(|\nabla|)}, \quad u = \,h + i\Lambda \psi.
\end{equation}

For the water waves system in the flat bottom setting, a typical issue is that the phases are  highly degenerated at the low frequency part. For example, we   consider a   phase associated with  a quadratic term  of (\ref{dispersive}) as follows,
\[
\Lambda(|\xi|) -\Lambda(|\xi-\eta|)+\Lambda(|\eta|) \approx [|\xi|-|\xi-\eta|+|\eta|] -\frac{1}{6}\big( |\xi|^3 -|\xi-\eta|^3 +|\eta|^3 \big), \quad    |\eta|\leq |\xi|\sim |\xi-\eta|\ll 1.
\]

When   ``$\xi$'' and ``$-\eta$'' are in the same direction, above phase is of size $|\xi|^2|\eta|$, which is highly degenerated.  Because of this issue, generally speaking, it is not hopeful to prove the sharp  $1/t$ decay rate of the nonlinear  solution over time. As a result, a rough energy estimate is not sufficient to control the growth of energy in the long run.

To get around this issue, instead of working too hard, it turns out that there is a relatively simple way to control the growth of energy. It relies on the   two observations for the system (\ref{waterwave}) as follows,

 (i) We can derive a new $L^2-L^\infty$ type   energy estimate    after carefully analyzing     the structures inside the quadratic terms  in (\ref{waterwave}). The input inside quadratic terms is not roughly putted in $L^\infty$ but in a weaker $L^\infty$ type space, which has derivatives in front.  See (\ref{energyestimate}).

  (ii)  The derivatives at low frequency part compensate  the decay rate of the solution of (\ref{waterwave}).  We can prove that the solution with some derivatives in front decays sharply, despite the fact that the solution itself may not have sharp decay rate. The proof of this fact involves a very delicate Fourier analysis. For those interested readers, please refer to \cite{wang2} for more details.

Before stating our main result, we define our  main function spaces as follows, 
\begin{equation}\label{equation1}
\| f \|_{\widetilde{W^\gamma}}:= \sum_{k \geq 0, k\in \mathbb{Z}}2^{\gamma k} \|P_{k} f\|_{L^\infty} + \| P_{\leq 0} f\|_{L^\infty},
\end{equation}
\begin{equation}\label{equation2}
\| f \|_{\widehat{W^{\gamma,\alpha}}}:= \sum_{ k\in \mathbb{Z}}(2^{\alpha k}+2^{\gamma k}) \|P_{k} f\|_{L^\infty}, \quad 0\leq \alpha \leq \gamma, \quad \| f\|_{\widehat{W^\gamma}}:= \| f \|_{\widehat{W^{\gamma,0}}}.
\end{equation}

\begin{theorem}\label{maintheorem}
Let $0< \delta < c$,    $  \alpha\in(0,1]$, and $N_0\geq 6$, where  $c$ is some sufficiently small constant.  If the initial data $(\,h_0, \Lambda \psi_0)\in H^{N_0+1/2}(\R^2)\times H^{N_0}(\R^2)$  satisfies the   smallness condition as follows,
\begin{equation}\label{smallnesscondition}
\| (h_0, \Lambda\psi_0)\|_{\widetilde{W^4}} \leq \delta, 
\end{equation}
then there exists $T > 0$ such that the system \textup{(\ref{waterwave})} has a unique solution $(\,h, \Lambda\psi)\in C^{0}\big([0,T]; H^{N_0}(\R^2)\times H^{N_0}(\R^2)\big)$. Moreover, we have a new type of energy estimate in the time interval of existence as follows,
\begin{equation}\label{energyestimate}
\frac{d}{d t}E_{N_0}(t)\lesssim_{N_0}  \big[ \|(\,h, \Lambda\psi)(t)\|_{\widehat{W^{4,\alpha}}} + \| (\,h, \Lambda\psi)(t)\|_{\widehat{W^4}}^2\big] E_{N_0}(t),
\end{equation}
where the energy $E_{N_0}(t)$ is defined in \textup{(\ref{energyfunction})}. The size of energy is comparable to $\|(\,h, \Lambda\psi)(t)\|_{H^{N_0}}^2$.
\end{theorem}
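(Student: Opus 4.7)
The plan combines a paralinearization of the system (\ref{waterwave}) with the structural information about the Dirichlet-Neumann operator promised in the remainder of the paper: the explicit formula for its quadratic part, the fixed-point representation of its cubic-and-higher tail, and its non-vanishing but smooth low-frequency symbol. For local well-posedness, I would pass to the diagonalized variable $u = h + i\Lambda\psi$, paralinearize the top-order terms, symmetrize, and appeal to a standard hyperbolic energy argument to obtain a solution in $C^0([0,T]; H^{N_0}\times H^{N_0})$; the hypothesis $(h_0,\Lambda\psi_0)\in H^{N_0+1/2}\times H^{N_0}$ together with the $\widetilde{W^4}$ smallness (\ref{smallnesscondition}) supplies both the regularity and the Lipschitz control that the argument needs.

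For the growth estimate (\ref{energyestimate}), I would take $E_{N_0}(t)$ to be a quasilinear symmetrization of $\|(h,\Lambda\psi)\|_{H^{N_0}}^2$ in which lower-order corrections cancel the principal top-order contribution coming from $G(h)\psi$; this makes $\tfrac{d}{dt}E_{N_0}$ free of unbounded commutator terms. Substituting from (\ref{waterwave}) then produces three classes of contributions: (a) quadratic terms arising from the explicit quadratic piece of $G(h)\psi$, from $|\nabla\psi|^2$, and from $(G(h)\psi+\nabla h\cdot\nabla\psi)^2/(2(1+|\nabla h|^2))$; (b) cubic and higher-order terms from the fixed-point tail of $G(h)\psi$; and (c) symmetrization remainders. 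Class (b) is bounded by $\|(h,\Lambda\psi)\|_{\widehat{W^4}}^2 E_{N_0}$ after iterating the fixed-point formula and pairing in $L^2\times L^\infty\times L^\infty$, and class (c) gains one derivative through the commutator structure and hence costs only $\|(h,\Lambda\psi)\|_{\widehat{W^{4,\alpha}}} E_{N_0}$.

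The main obstacle is the quadratic family (a). A naive $L^2\times L^\infty$ pairing costs the full $L^\infty$ norm, and in the flat-bottom setting, as the introduction stresses, no null structure rescues the low frequencies, so closing the estimate forces one to extract at least one low-frequency derivative from one of the factors. The strategy is to decompose each quadratic term into low-high, high-low, and high-high pieces via a Littlewood--Paley paradifferential split. For high-high and high-low interactions, an integration by parts in the $L^2$ pairing transfers a derivative from the high-frequency input to the low-frequency one; combined with Bernstein, this supplies exactly the $2^{\alpha k}$ low-frequency weight that $\widehat{W^{4,\alpha}}$ is designed to absorb. For low-high interactions, one uses that the explicit quadratic symbol of $G(h)\psi$ is smooth in the low-frequency variable, so a Taylor expansion in that variable pulls out an extra factor of the low frequency, again matching the weight in $\widehat{W^{4,\alpha}}$. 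The resulting bound on class (a) is $\|(h,\Lambda\psi)\|_{\widehat{W^{4,\alpha}}} E_{N_0}$, which combined with (b) and (c) closes (\ref{energyestimate}).
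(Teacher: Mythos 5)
Your overall architecture (paralinearize the system, pass to good symmetrized unknowns, prove a tame estimate, classify quadratic versus cubic-and-higher contributions, and exploit the fixed-point tail for the latter) does track the paper's line of attack, which follows Alazard--M\'etivier and Alazard--Burq--Zuily: the good unknown is $\omega = \psi - T_{B}h$, the further symmetrizers are $U_1 = h + T_{\alpha}h$, $U_2=\Lambda(\omega+T_\beta\omega)$ with $\alpha=\sqrt{a}-1$, $\beta = \sqrt{\lambda/|\xi|}-1$, and after symmetrization the only top-order quadratic interaction is $T_{\sqrt{\lambda}\alpha}$, which is treated by the adjoint lemma (Lemma~\ref{adjoint}), i.e.\ exactly the ``move a derivative by the hidden symmetry'' step you gesture at for high-low/high-high. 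However, there is a concrete gap in your treatment of the remaining low-high quadratic interaction.

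You propose to handle the low-high case (say $h$ at low frequency $\xi-\eta$, $\psi$ at high frequency $\eta$) by Taylor expanding the explicit quadratic symbol in the low-frequency variable to ``pull out an extra factor of the low frequency.'' That step fails: the quadratic symbol of $G(h)\psi$ is $\tilde q(\xi-\eta,\eta)=\xi\cdot\eta - |\xi||\eta|\tanh|\xi|\tanh|\eta|$, and its value at $\xi-\eta=0$ (i.e.\ $\xi=\eta$) is $|\eta|^2(1-\tanh^2|\eta|)=|\eta|^2/\cosh^2|\eta|$, which does \emph{not} vanish. So the Taylor expansion has a non-trivial constant term and cannot produce the low-frequency factor you want. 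The paper instead splits $\tilde q=p_1+p_2$ with $p_1=\xi\cdot\eta-|\xi||\eta| = -\tfrac12|\xi-\eta|^2+\tfrac12(|\xi|-|\eta|)^2$, which by the algebraic identity (not a Taylor expansion) genuinely carries two powers of the low frequency, and $p_2=|\xi||\eta|(1-\tanh|\xi|\tanh|\eta|)$, which is exponentially small when the high frequency is large, so one can lower the regularity and put $\psi$ in $L^\infty$ and $h$ in $L^2$. You need one of these two mechanisms, and each for the right piece; a Taylor expansion alone gives neither. A secondary, smaller concern is that you never commit to specific symmetrizers, so the claim that the symmetrization remainders ``gain one derivative through the commutator structure'' is not yet substantiated: that gain hinges on the particular choice $\alpha=\sqrt a-1$, $\beta=\sqrt{\lambda/|\xi|}-1$, which makes the coupling operator $T_{\sqrt\lambda\alpha}$ self-adjoint to leading order so that Lemma~\ref{adjoint} applies.
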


\begin{remark}
Note that smallness condition is not assumed  in  \cite{alazard1, alazard2,alazard3, lannes}  to derive the local wellposedness. For the 
purpose of obtaining global solution, we impose the  smallness condition (\ref{smallnesscondition})   to derive (\ref{energyestimate}) as our main goal, which is the first step to obtain global existence for small initial data. 
\end{remark}

In our second paper \cite{wang2}, base  on the results we obtained in this paper, we show  that the solution of the system (\ref{waterwave}) globally exists and scatters to a linear solution. We will study the long time behaviors of the water waves system (\ref{fullsystem}) in other settings  in the future. For example, the capillary waves system and the gravity-capillary waves system. We expect that the results we obtained in this paper  will be very helpful to the future study of water waves system in the flat bottom setting. 

\subsection{Previous results.}  To be concise,  we mainly discuss works on the local behavior of the water waves system in this subsection.   For more detailed discussion on the long time behavior, please refer to the introduction  of our second paper \cite{wang2} and references therein.

 Starting from the work of Nalimov \cite{nalimov} and Yoshihara \cite{yosihara}, there are considerable amount of works on the local theory of the water waves system. In the framework of Sobolev spaces and without smallness assumptions on the initial data, the local wellposedness was first obtained by Wu \cite{wu1, wu2} for the gravity waves system.  The local wellposedness was also obtained when the surface tension effect is   effective  by Beyer-G\"unther  \cite{beyer}. Later, different methods were developed and many important results were obtained to improve our understanding on the local behavior of the water waves system.  Among many of them, we mention the following works:  Christodoulou-Lindblad \cite{christodoulou}, Ambrose-Masmoudi \cite{ambrose}, Lannes \cite{lannes}, Shatah-Zeng \cite{shatah1}, Coutand-Shkoller \cite{coutand1}, Alazard-Burq-Zuily \cite{alazard1, alazard2, alazard3}. Interested readers may refer there and the references therein for more details.

Roughly speaking,  the local existence of the water waves system (\ref{fullsystem}) holds even when the initial interface  has a unbounded curvature and  the   bottom   is very rough. A fixed length separation between the interface and the bottom is sufficient.  See the works of Alazard-Burq-Zuily \cite{alazard1, alazard2, alazard3} and Lannes \cite{lannes} for more details and more precise descriptions.

 \subsection{Main ideas and the outline of this paper}
To prove our main theorem, we have to pay attentions to both the low frequency part and the high frequency part. 

For the high frequency part, due to the quasilinear nature of the gravity waves system (\ref{waterwave}), we have to get around the difficulty of losing  one derivative. Thanks to the work of Lannes \cite{lannes}, the work of Alazard-M\'etivier \cite{alazard4} and the works of  Alazard-Burq-Zuily \cite{alazard1, alazard2, alazard3}, we can utilize the  paralinearization method to get around the potential loss of one derivative. However, for their purposes, only the high frequency part has been paid attentions in their works.  In this paper, we will  do the paralinearization process   and pay special attentions to the low frequency part at the same time. 

For the low frequency part, more  careful estimates of the Dirichlet-Neumann operator are necessary and essential. Since it is not straightforward to see the fact  that we can gain ``$\alpha$ ''derivatives for the input that is putted in $L^\infty-$ type space. For example, for the quadratic term   $\nabla h \cdot \nabla \psi$ of the Dirichlet-Neumann operator, it is problematic to gain $\alpha$ derivatives  when $\psi$ has smaller frequency   because   the total number of derivatives of $\psi$ in (\ref{energyestimate})   is  $1+\alpha$  at the low frequency part  when the input $\psi$ of the quadratic terms is putted in $L^\infty$.

 To close the argument,  we will use the hidden structures inside the system (\ref{waterwave}) for different scenarios.  Without involving too many details, we give two examples as follows to explain the main ideas behind. (i) When $\psi$ has a smaller frequency inside $\nabla h\cdot \nabla \psi$, we can use the hidden symmetry  to move one derivative from $\nabla h$ to  $\nabla \psi$ during energy estimate, hence we have two derivatives in total for $\psi$. (ii) For some terms, e.g., the good remainder term of  the paralinearization  process,  we can lower their regularities to $L^2$. Hence, we can  put $\nabla\psi$ in $L^2$  and  put $\nabla\,h$ in $L^\infty$, as a result the desire estimate (\ref{energyestimate}) also  holds for this case.

\vo

\noindent \textbf{Outline:\quad }In section \ref{preliminary}, we introduce   notations and give a quick summary of the paradifferential calculus. In section \ref{DNoperator}, we study various properties of the Dirichlet-Neumann operator. In section \ref{parasymm}, we use the paralinearization method to show the good structures inside the system (\ref{waterwave}), which   help us to find   good substitution   variables. In section \ref{energyestimatesection}, we  prove the new energy estimate (\ref{energyestimate}) by using the symmetries inside the  equations satisfied by the  good substitution variables. In the appendix, we   calculate explicitly the quadratic terms of good remainder terms. It aims to help readers   understand the fact that we can gain  gain ``$\alpha$ ''derivatives in (\ref{energyestimate}) for the inputs  of quadratic terms, which are putted	 in the $L^\infty-$ type space.

\vo
\noindent \textbf{Acknowledgement} \quad I thank my Ph.D. advisor Alexandru Ionescu for many helpful discussions and suggestions. The first version of manuscript was done when I was visiting Fudan University and BICMR, Peking University. I thank Prof. Zhen Lei and BICMR for their warm hospitalities during the visits.
\section{Preliminary}\label{preliminary}

\subsection{Notations} For any two numbers $A$ and $B$, we use  $A\lesssim B$ and $B\gtrsim A$ to denote  $A\leq C B$, where $C$ is an absolute constant. We use   $A\lesssim_{\epsilon} B$ to denote $A\leq C_{\epsilon} B$, where  constant $C_{\epsilon}$ depends on $\epsilon$. For an integer $k\in\mathbb{Z}$, we use $k_{+}$ to denote $\max\{k,0\}$ and   use $k_{-}$ to denote $\min\{k,0\}$.

Throughout this paper, we will abuse the notation of $``\Lambda"$. When  there is no lower script associated with $\Lambda$, then  $\Lambda:=\sqrt{\tanh(|\nabla|)|\nabla|}$, which is the linear operator associated with the  system (\ref{dispersive}). For $p\in \mathbb{N}_{+}$, we  use   $\Lambda_{p}(\mathcal{N})$ to denote the $p$-th order terms of a nonlinearity $\mathcal{N}$  when a Taylor expansion  for the nonlinearity $\mathcal{N}$ is available.   For example, $\Lambda_{2}[\mathcal{N}]$ denotes the quadratic term of $\mathcal{N}$.   We also use   $\Lambda_{\geq p}[\mathcal{N}]$ to denote the $p$-th  and higher orders terms. More precisely, $\Lambda_{\geq p}[\mathcal{N}]:=\sum_{q\geq p}\Lambda_{q}[\mathcal{N}]$.  In this paper, 
 the Taylor expansion and $\Lambda_{p}[\cdot]$  are in terms of $\,``h"$ and $``\psi"$ when there is no special annotation.

We fix an even smooth function $\tilde{\psi}:\R \rightarrow [0,1]$, which is supported in $[-3/2,3/2]$ and equals to $1$ in $[-5/4, 5/4]$. For any $k\in \mathbb{Z}$, define
\[
\psi_{k}(x) := \tilde{\psi}(x/2^k) -\tilde{\psi}(x/2^{k-1}), \quad \psi_{\leq k}(x):= \tilde{\psi}(x/2^k), \quad \psi_{\geq k}(x):= 1-\psi_{\leq k-1}(x).
\]
Denote the projection operators $P_{k}$, $P_{\leq k}$ and $P_{\geq k}$ by the Fourier multipliers $\psi_{k},$ $\psi_{\leq k}$ and $\psi_{\geq k }$ respectively. For a well defined function $f$, we will also use the notation $f_{k}$ to abbreviate $P_{k} f$.

 The Fourier transform is defined as follows, 
\[
\mathcal{F}(f)(\xi) = \int_{\R^2} e^{-i x\cdot \xi} f (x) d x. 
\] 
 For two well defined functions $f$ and $g$ and  a bilinear form  $Q(f,g)$, we will use the convention that the symbol $q(\cdot, \cdot)$ of $Q(\cdot, \cdot)$  is defined in the following sense throughout this paper,
\begin{equation}
\mathcal{F}[Q(f,g)](\xi)= \frac{1}{4\pi^2} \int_{\R^2} \widehat{f}(\xi-\eta)\widehat{g}(\eta)q(\xi-\eta, \eta) d \eta. 
\end{equation}
Meanwhile, for a trilinear form $C(f, g, h)$, its symbol $c(\cdot, \cdot, \cdot)$ is defined in the following sense, 
\[
\mathcal{F}[C(f,g,h)](\xi) =  \frac{1}{16\pi^4} \int_{\R^2}\int_{\R^2} \widehat{f}(\xi-\eta)\widehat{g}(\eta-\sigma) \widehat{h}(\sigma) c(\xi-\eta, \eta-\sigma, \sigma) d \eta d \sigma.
\]

\subsection{Multilinear estimate}

We define a class of symbol and its associated norms as follows,
\[
\mathcal{S}^\infty:=\{ m: \mathbb{R}^4\,\textup{or}\, \mathbb{R}^6 \rightarrow \mathbb{C}, m\,\textup{is continuous and }  \quad \| \mathcal{F}^{-1}(m)\|_{L^1} < \infty\},
\]
\[
\| m\|_{\mathcal{S}^\infty}:=\|\mathcal{F}^{-1}(m)\|_{L^1}, \quad \|m(\xi,\eta)\|_{\mathcal{S}^\infty_{k,k_1,k_2}}:=\|m(\xi, \eta)\psi_k(\xi)\psi_{k_1}(\xi-\eta)\psi_{k_2}(\eta)\|_{\mathcal{S}^\infty},
\]
\[
 \|m(\xi,\eta,\sigma)\|_{\mathcal{S}^\infty_{k,k_1,k_2,k_3}}:=\|m(\xi, \eta,\sigma)\psi_k(\xi)\psi_{k_1}(\xi-\eta)\psi_{k_2}(\eta-\sigma)\psi_{k_3}(\sigma)\|_{\mathcal{S}^\infty}.
\]

\begin{lemma}\label{multilinearestimate}
Assume that $m$, $m'\in S^\infty$, $p, q, r, s \in[1, \infty]$ , then the following estimates hold for  well defined functions $f(x), g(x)$, and $h(x)$, 
\begin{equation}\label{productofsymbol}
\| m\cdot m'\|_{S^\infty} \lesssim \| m \|_{S^\infty}\| m'\|_{S^\infty},
\end{equation}
\begin{equation}\label{bilinearesetimate}
\Big\| \mathcal{F}^{-1}\big[\int_{\R^2} m(\xi, \eta) \widehat{f}(\xi-\eta) \widehat{g}(\eta) d \eta\big]\Big\|_{L^p} \lesssim \| m\|_{\mathcal{S}^\infty}\| f \|_{L^q}\| g \|_{L^r}  \quad \textup{if}\,\,\, \frac{1}{p} = \frac{1}{q} + \frac{1}{r},
\end{equation}
\begin{equation}\label{trilinearesetimate}
\Big\| \mathcal{F}^{-1}\big[\int_{\R^2}\int_{\R^2} m'(\xi, \eta,\sigma) \widehat{f}(\xi-\eta) \widehat{h}(\sigma) \widehat{g}(\eta-\sigma)  d \eta d\sigma\big] \Big\|_{L^{p}} \lesssim \|m'\|_{\mathcal{S}^\infty} \| f \|_{L^q}\| g \|_{L^r} \| h\|_{L^s},\,\, \end{equation}
where $ \displaystyle{\frac{1}{p} = \frac{1}{q} + \frac{1}{r} + \frac{1}{s}}.$
\end{lemma}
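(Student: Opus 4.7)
The strategy for all three estimates is to pass to the physical side, representing each symbol as the Fourier transform of its inverse Fourier kernel and then reducing the bounds to Young's and Hölder's inequalities. The point of the $\mathcal{S}^\infty$ norm is precisely that it measures the $L^1$ mass of this kernel, which is the quantity that survives under convolution and under translation-by-parameter arguments.

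For the product bound \eqref{productofsymbol}, I would write $K=\mathcal{F}^{-1}(m)$ and $K'=\mathcal{F}^{-1}(m')$ as functions on $\mathbb{R}^4$ (or $\mathbb{R}^6$ in the trilinear setting). Since the Fourier transform takes products to convolutions, $\mathcal{F}^{-1}(m\cdot m')=K*K'$, and Young's inequality $L^1*L^1\hookrightarrow L^1$ gives
\[
\|m\cdot m'\|_{\mathcal{S}^\infty}=\|K*K'\|_{L^1}\le\|K\|_{L^1}\|K'\|_{L^1}=\|m\|_{\mathcal{S}^\infty}\|m'\|_{\mathcal{S}^\infty}.
\]

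For the bilinear estimate \eqref{bilinearesetimate}, I would expand $m(\xi,\eta)=\int_{\mathbb{R}^4}K(y,z)e^{iy\cdot(\xi-\eta)}e^{iz\cdot\eta}\,dy\,dz$ inside the defining integral, interchange the order of integration, and recognize the inner Fourier integrals as translations. This gives the representation
\[
Q(f,g)(x)=\int_{\mathbb{R}^4}K(y,z)\,f(x-y)\,g(x-z)\,dy\,dz
\]
(absorbing the $1/(4\pi^2)$ into normalization). Then Minkowski's integral inequality pulls the $L^p_x$-norm inside and Hölder with $1/p=1/q+1/r$ applied to each translated pair yields
\[
\|Q(f,g)\|_{L^p}\le\int_{\mathbb{R}^4}|K(y,z)|\,\|f(\cdot-y)\|_{L^q}\,\|g(\cdot-z)\|_{L^r}\,dy\,dz=\|K\|_{L^1}\|f\|_{L^q}\|g\|_{L^r},
\]
which is exactly the claim since $\|K\|_{L^1}=\|m\|_{\mathcal{S}^\infty}$.

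The trilinear case \eqref{trilinearesetimate} follows the same template with one extra variable: after a change of variable in the $(\xi,\eta,\sigma)$ integration and writing $K'=\mathcal{F}^{-1}(m')$ on $\mathbb{R}^6$, the trilinear operator admits the representation $\int K'(y,z,w)f(x-y)g(x-z)h(x-w)\,dy\,dz\,dw$, after which Minkowski plus three-factor Hölder with $1/p=1/q+1/r+1/s$ concludes the proof. The only subtlety is keeping track of the change of variables so that the three input functions are evaluated at translates of a single base point $x$; this is automatic because the symbol depends on $(\xi-\eta,\eta-\sigma,\sigma)$, which exponentiates into three independent shifts. None of the steps is delicate — the main content of the lemma is really the definition of the $\mathcal{S}^\infty$ norm itself, and once the physical-side representation is in hand everything reduces to classical inequalities.
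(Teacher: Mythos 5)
The paper states this lemma without proof, treating it as a standard fact from the dispersive PDE literature. Your argument --- Young's inequality $L^1 * L^1 \hookrightarrow L^1$ for the product bound, and the physical-side kernel representation of the bilinear/trilinear operators followed by Minkowski's integral inequality and H\"older --- is correct and is the standard proof, with the one implicit step (that the unit-determinant shear $(\xi,\eta)\mapsto(\xi-\eta,\eta)$ leaves the $L^1$ norm of the kernel unchanged) handled properly.
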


To estimate the $\mathcal{S}^{\infty}_{k,k_1,k_2}$ norm  and the $\mathcal{S}^{\infty}_{k,k_1,k_2,k_3}$ norm  of symbols, we   constantly use the following lemma . 
\begin{lemma}\label{Snorm}
For $i\in\{1,2,3\}, $ if $f:\mathbb{R}^{2i}\rightarrow \mathbb{C}$ is a smooth function and $k_1,\cdots, k_i\in\mathbb{Z}$, then the following estimate holds,
\begin{equation}\label{eqn61001}
\| \int_{\mathbb{R}^{2i}} f(\xi_1,\cdots, \xi_i) \prod_{j=1}^{i} e^{i x_j\cdot \xi_j} \psi_{k_j}(\xi_j) d \xi_1\cdots  d\xi_i \|_{L^1_{x_1, \cdots, x_i}} \lesssim \sum_{m=0}^{i+1}\sum_{j=1}^i 2^{m k_j}\|\p_{\xi_j}^m f\|_{L^\infty} .
 \end{equation}
\end{lemma}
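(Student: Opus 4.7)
The inequality is a standard Bernstein-type kernel estimate: it bounds the $L^1$ norm of the inverse Fourier transform of a smooth, compactly supported symbol in terms of a sum of $L^\infty$ derivative norms of that symbol. The proof is integration by parts in each frequency variable combined with the trivial bound coming from the compact support of the cutoffs.

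Denote
\[
K(x_1,\ldots,x_i):=\int_{\R^{2i}} f(\xi_1,\ldots,\xi_i)\prod_{j=1}^i e^{ix_j\cdot\xi_j}\psi_{k_j}(\xi_j)\,d\xi.
\]
The trivial bound, using that each $\psi_{k_j}$ is supported in an annulus of measure $\lesssim 2^{2k_j}$, gives $|K(x)|\lesssim \|f\|_{L^\infty}\prod_l 2^{2k_l}$. For a fixed index $j$, I would integrate by parts $m$ times in $\xi_j$ along the direction $x_j/|x_j|$; each integration trades a factor of $i|x_j|$ for a $\xi_j$-derivative on the product $f\,\psi_{k_j}$. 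Distributing these derivatives via the Leibniz rule and using $\|\partial_{\xi_j}^{l}\psi_{k_j}\|_{L^\infty}\lesssim 2^{-lk_j}$, this gives
\[
(2^{k_j}|x_j|)^m |K(x)|\lesssim \Bigl(\prod_l 2^{2k_l}\Bigr)\sum_{m'=0}^m 2^{m'k_j}\|\partial_{\xi_j}^{m'}f\|_{L^\infty}.
\]
Combining with the trivial bound produces, for any chosen order $m\le i+1$,
\[
|K(x)|\lesssim \Bigl(\prod_l 2^{2k_l}\Bigr)\cdot (1+2^{k_j}|x_j|)^{-m}\cdot \sum_{m'=0}^m 2^{m'k_j}\|\partial_{\xi_j}^{m'}f\|_{L^\infty}.
\]

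To integrate over $x\in\R^{2i}$, I would partition the space into $i$ regions according to which index $j^\ast(x)\in\{1,\ldots,i\}$ maximizes $2^{k_j}|x_j|$. On the region $\{j^\ast=j\}$, apply the pointwise kernel estimate above in direction $j$ with $m=i+1$, and use the constraint $2^{k_l}|x_l|\lesssim 2^{k_j}|x_j|$ for the remaining variables. After the linear change of variables $y_l=2^{k_l}x_l$, each piece reduces to a convergent integral over $\R^2$ of the type $\int|y|^{2(i-1)}(1+|y|)^{-(i+1)}\,dy$ (with the borderline cases handled by one additional IBP, which is precisely why the sum on the right-hand side is allowed to run up to $m=i+1$). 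Summing over the $i$ regions gives the claim.

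\textbf{Main obstacle.} The delicate point is to produce a pointwise bound in which only single-variable derivatives $\partial_{\xi_j}^m f$ appear on the right, and not mixed partials across several $\xi_j$'s. This is achieved by performing IBP in a single direction per region of $x$-space and absorbing the other variables with the trivial bound and the support volume $\prod_l 2^{2k_l}$. The second delicate point is the integrability at infinity in each $x_j\in\R^2$, which requires decay strictly faster than $|x_j|^{-2}$ per direction; this is exactly what forces the IBP order in the dominant direction to be taken up to $i+1$.
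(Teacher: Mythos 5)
There is a genuine gap in the integrability step. On the region $\{j^\ast=j\}$ you integrate the other variables over the slab $\{|y_l|\le|y_j|\}$, each contributing a factor $\pi|y_j|^2$, so the integral you must control is
\[
\int_{\R^2}|y_j|^{2(i-1)}(1+|y_j|)^{-m}\,dy_j=2\pi\int_0^\infty r^{2i-1}(1+r)^{-m}\,dr,
\]
which converges only if $m\ge 2i+1$. With $m=i+1$ (the largest derivative order the lemma allows) the integrand behaves like $r^{i-2}$ at infinity, so it diverges for every $i\in\{1,2,3\}$; and the ``one additional IBP'' you invoke gives only $m=i+2$, which still falls short for $i\ge 2$. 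The stated inequality is simply not reachable by pointwise kernel decay plus direct $L^1$ integration, because an $L^1$ bound on the kernel requires roughly twice as many derivatives as the right-hand side provides.

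The paper's proof avoids this by never asking for pointwise decay. It rescales to $k_j=0$, then records two $L^2$ estimates via Plancherel: $\|K\|_{L^2}\lesssim\|f\|_{L^\infty}$ from the compact support, and $\|(|x_1|+\cdots+|x_i|)^{i+1}K\|_{L^2}\lesssim\sum_{m\le i+1}\sum_j\|\partial_{\xi_j}^m f\|_{L^\infty}$ from moving the weight to frequency derivatives. Cauchy--Schwarz then yields $\|K\|_{L^1}\le\|(1+|x|)^{-(i+1)}\|_{L^2(\R^{2i})}\,\|(1+|x|)^{i+1}K\|_{L^2}$, and the first factor is finite precisely because $2(i+1)>2i$. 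In other words, $L^2$ control costs half as many derivatives as $L^1$ control, and that saving is exactly what makes the exponent $i+1$ work. Your argument would be fine if the lemma's right-hand side ran up to $m=2i+1$; to hit $m=i+1$ you should switch to the Plancherel/Cauchy--Schwarz route.
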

\begin{proof}
The case when $i=1,3$ can be estimated in the same way as the case when $i=2$. We only do the case $i=2$ in details here. Through scaling, it is  sufficient to prove above estimate for the case when $k_1=k_2=0$. From Plancherel theorem, we have the following two estimates, 
\[
 \| \int_{\mathbb{R}^{2i}} f(\xi_1,\xi_2) e^{i (x_1\cdot \xi_1+ x_2\cdot \xi_2)} \psi_{0}(\xi_1) \psi_{0}(\xi_2) d \xi_1 d\xi_2 \|_{L^2_{x_1, x_2}}\lesssim \| f(\xi_1, \xi_2)\|_{L^\infty_{\xi_1, \xi_2}},
\]
\[
 \| (|x_1|+|x_2|)^3 \int_{\mathbb{R}^{2i}} f(\xi_1,\xi_2) e^{i (x_1\cdot \xi_1+ x_2\cdot \xi_2)} \psi_{0}(\xi_1) \psi_{0}(\xi_2) d \xi_1 d\xi_2 \|_{L^2_{x_1, x_2}}\lesssim  \sum_{m=0}^3\big[\|\p_{\xi_1}^m f\|_{L^\infty} + \| \p_{\xi_2}^m f \|_{L^\infty}\big],
\]
which are sufficient to finish the proof of (\ref{eqn61001}).
\end{proof}

\subsection{Paradifferential calculus}
In this subsection, we discuss some necessary background materials of the paradifferential calculus. For more details and related topics, please refer to \cite{para} and references therein.
\begin{definition}
Given $\rho\in \mathbb{N}_{+}, \rho \geq 0$ and $m\in \R$, we use $\Gamma^{m}_{\rho}(\R^2)$  to denote the space of locally bounded functions $a(x,\xi)$ on $\R^2\times (\R^2/\{0\})$, which are $C^{\infty}$ with respect to $\xi$ for $\xi\neq 0 $. Moreover, they satisfy the following estimate, 
\[
\forall |\xi|\geq 1/2, \| \p_{\xi}^{\alpha} a(\cdot, \xi)\|_{W^{\rho, \infty}}\lesssim_{\alpha} (1+|\xi|)^{m-|\alpha|}, \quad \alpha\in \mathbb{N}^2,
\]
where $W^{\rho, \infty}$ is the usual Sobolev space. Note that $W^{\rho, \infty}$ contains the spaces $\widetilde{W^\rho}$ and $\widehat{W^{\rho, \alpha}}$, which are defined in (\ref{equation1}) and (\ref{equation2}),  as subspaces.
\end{definition}

\begin{remark}
$\rho$ in above definitions is not necessary an integer, but the integer case is sufficient for our purpose.
\end{remark}

\begin{definition}
\begin{enumerate}
\item[(i)] We use   $\dot{\Gamma}^{m}_{\rho}(\R^2)$ to denote the subspace of $\Gamma^{m}_{\rho}(\R^2)$, which consists of symbols that are homogeneous of degree $m$ in $\xi.$ \\
\item[(ii)] If $a= \displaystyle{\sum_{0\leq j < \rho} a^{(m-j)}}$, where $a^{(m-j)}\in \dot{\Gamma}^{m-j}_{\rho-j}(\R^2)$, then we say $a^{(m)}$ is the principal symbol of $a$.\\
\item[(iii)] An operator $T$ is said to be of order $m$, $m\in \R$, if for all $\mu\in\R$, it's bounded from $H^{\mu}(\R^2)$ to $H^{\mu-m}(\R^2)$. We use $S^{m}$ to denote the set of all operators of order m.
\end{enumerate}
\end{definition}
For symbol $a\in \Gamma^{m}_{\rho}$, we can define its norm as follows,
\[
M^{m}_{\rho}(a):= \sup_{|\alpha|\leq 2+\rho} \sup_{|\xi|\geq 1/2} \| (1+|\xi|)^{|\alpha|-m}\p_{\xi}^{\alpha} a (\cdot, \xi)\|_{W^{\rho, \infty}}.
\]

For  
$a, f\in L^2$ and  a pseudo differential operator $\tilde{a}(x,\xi)$, we define the operator $T_{a} f$ and $T_{\tilde{a}} f$ as follows,
 \begin{equation}\label{eqn1001}
T_a f = \mathcal{F}^{-1}[\int_{\R} \widehat{a}(\xi-\eta) \theta(\xi-\eta, \eta)\widehat{f}(\eta) d \eta]
,\,\, T_{\tilde{a}} f = \mathcal{F}^{-1} [ \int_{\R} \mathcal{F}_x(\tilde{a})(\xi-\eta,\eta) \theta(\xi-\eta,\eta)\widehat{f}(\eta) d \eta  ],
\end{equation}
where the cut-off function  $\theta(\xi-\eta, \eta) $ is defined as follows,
\[
\theta(\xi-\eta, \eta) = \left\{\begin{array}{ll}
1 & \textup{when}\,\,|\xi-\eta|\leq 2^{-10} |\eta|, |\eta| \geq 1,\\
0 & \textup{when}\,\, |\xi-\eta| \geq 2^{10} |\eta|\,\, \textup{or $|\eta|\leq 1$}.\\
\end{array}\right.
\]
For two well defined functions $a$ and $b$, we have the   paraproduct decomposition as follows,
\begin{equation}\label{equation340}
a b = T_a b + T_b a + \mathcal{R}(a,b),
\end{equation}
where $\mathcal{R}(a,b)$ contains those terms in which $a$ and $b$ have comparable size of frequencies or the frequency of output is less than ``$1$''.

 We have the following composition lemma for paradifferential operators. It can be found, for example, in \cite{ alazard1,para}.
\begin{lemma}\label{composi}
Let $m\in \R$ and $\rho >0$. If given symbols $a\in \Gamma_{\rho}^{m}(\R^d)$ and $b\in\Gamma_{\rho}^{m'}(\R^d)$,  we   define
\[
a\sharp b = \sum_{|\alpha|< \rho} \frac{1}{i^{|\alpha|} \alpha!} \p_{\xi}^{\alpha} a \p_{x}^{\alpha}b,
\]
then for all $\mu\in\R$, there exists a constant $K$ such that 
\begin{equation}\label{eqn700}
\| T_a T_b - T_{a\sharp b}\|_{H^{\mu}\rightarrow H^{\mu-m-m'+\rho}} \leq K M^{m}_{\rho}(a) M^{m'}_{\rho}(b).
\end{equation}
\end{lemma}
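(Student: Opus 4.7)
The plan is to compute the Schwartz kernel of $T_a T_b$ in Fourier variables, compare it to that of $T_{a\sharp b}$, and control the discrepancy through a Taylor expansion of $a$ in its frequency argument to order $\rho$.

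First I would unfold (\ref{eqn1001}) to write
\[
\mathcal{F}(T_a T_b f)(\xi)=\int\!\!\int \mathcal{F}_x a(\xi-\zeta,\zeta)\,\theta(\xi-\zeta,\zeta)\,\mathcal{F}_x b(\zeta-\eta,\eta)\,\theta(\zeta-\eta,\eta)\,\widehat{f}(\eta)\,d\zeta\,d\eta.
\]
The inner cutoff $\theta(\zeta-\eta,\eta)$ forces $|\zeta-\eta|\ll|\eta|$, so $\zeta\approx\eta$ on the support of the integrand and $a(x,\zeta)$ may legitimately be Taylor expanded at $\zeta=\eta$:
\[
a(x,\zeta)=\sum_{|\alpha|<\rho}\frac{(\zeta-\eta)^{\alpha}}{\alpha!}\,\partial_\xi^{\alpha}a(x,\eta)+r_\rho(x,\eta,\zeta-\eta),
\]
with $r_\rho$ the standard integral remainder. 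Using the identity $(\zeta-\eta)^{\alpha}\mathcal{F}_x b(\zeta-\eta,\eta)=i^{-|\alpha|}\mathcal{F}_x(\partial_x^{\alpha}b)(\zeta-\eta,\eta)$, the $\alpha$-th main term, after integrating in $\zeta$, assembles into $\mathcal{F}_x(\partial_\xi^{\alpha}a\cdot\partial_x^{\alpha}b)(\xi-\eta,\eta)$ multiplied by a cutoff. Collecting $|\alpha|<\rho$ with weight $1/(i^{|\alpha|}\alpha!)$ reproduces $T_{a\sharp b}$ modulo two error terms: (a) the Taylor remainder from $r_\rho$, and (b) a ``cutoff mismatch'' expressing the difference between the double cutoff $\theta(\xi-\zeta,\zeta)\theta(\zeta-\eta,\eta)$ and the single cutoff $\theta(\xi-\eta,\eta)$ that defines $T_{a\sharp b}$.

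For (b) I would observe that both cutoffs live in the same paraproduct region $|\xi-\eta|\ll|\eta|$, and the smooth difference $\theta(\xi-\zeta,\zeta)\theta(\zeta-\eta,\eta)-\theta(\xi-\eta,\eta)$ is supported where at least one of $|\xi-\zeta|$, $|\zeta-\eta|$ is comparable to $|\eta|$; in that regime Bernstein-type estimates transform the $W^{\rho,\infty}$ regularity of $a$ and $b$ in $x$ into a gain of $\rho$ factors of $|\eta|^{-1}$. The main estimate is (a): the factor $\partial_\xi^{\alpha}a$ with $|\alpha|=\rho$ has order $m-\rho$ since $a\in\Gamma^m_\rho$, and the $\rho$ factors of $(\zeta-\eta)$ in $r_\rho$ pair with $\mathcal{F}_x b$ to yield $\rho$ $x$-derivatives of $b$, bounded by $M^{m'}_\rho(b)(1+|\eta|)^{m'}$. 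Dyadically localizing in $\xi$, $\zeta$, and $\eta$, the kernel of the remainder operator has, on the shell $|\eta|\sim 2^k$, $\mathcal{S}^{\infty}_{k_1,k_2,k}$ norm controlled by $C M^m_\rho(a) M^{m'}_\rho(b) 2^{(m+m'-\rho)k}$ via Lemma \ref{Snorm}. Applying Lemma \ref{multilinearestimate} and summing the dyadic contributions yields the $H^\mu\to H^{\mu-m-m'+\rho}$ bound with constant $K M^m_\rho(a)M^{m'}_\rho(b)$.

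The main obstacle I expect is the bookkeeping for (b): a priori the cutoff mismatch could produce a symbol of the full order $m+m'$, so one must carefully show that its support and smoothness force a gain of $\rho$ orders. Once that analysis is done, the Taylor remainder (a) reduces to a routine dyadic Young inequality using the $\mathcal{S}^\infty$ calculus from Lemmas \ref{multilinearestimate} and \ref{Snorm}, and the homogeneity hypotheses on symbols and cutoffs are just enough to make every term fall into place.
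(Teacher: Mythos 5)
The paper does not prove this lemma; it invokes it as a known fact of Bony's paradifferential calculus and cites \cite{alazard1,para}. Your proof is therefore being compared against the literature rather than against an argument in the paper, and the strategy you outline (Taylor expand $a(x,\zeta)$ in $\zeta$ about $\eta$, using the inner cutoff to justify the expansion, then absorb the $(\zeta-\eta)^\alpha$ factors as $x$-derivatives on $b$) is precisely the classical argument, as in M\'etivier \cite{para}. So in approach you have reproduced the standard proof, which is what the paper relies on.

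One caveat on the part you flag yourself as the main obstacle, the cutoff mismatch (b). The assertion that the smooth difference $\theta(\xi-\zeta,\zeta)\theta(\zeta-\eta,\eta)-\theta(\xi-\eta,\eta)$ is supported where \emph{at least one} of $|\xi-\zeta|,|\zeta-\eta|$ is comparable to $|\eta|$ is not quite correct as stated: on the region where both cutoffs in the product are active but not yet equal to $1$, both frequency increments can be $\lesssim|\eta|$ without either being $\sim|\eta|$, and conversely one can have the product equal to $1$ while $\theta(\xi-\eta,\eta)<1$ because the $2^{-10}$ thresholds are not nested. The cleaner way to run step (b) is the standard spectral-localization argument: since $T_a$ and $T_b$ already restrict $|\xi-\zeta|\leq 2^{10}|\zeta|$ and $|\zeta-\eta|\leq 2^{10}|\eta|$, the composition automatically has its output spectrally close to the input, and $T_{a\sharp b}$ with a \emph{different but admissible} cutoff differs from the one you want by an operator that is $\rho$-smoothing precisely because $a\sharp b$ lies in $\Gamma^{m+m'}_{\rho'}$ for suitable $\rho'$ and changing the Bony cutoff in $T_c$ for $c\in\Gamma^s_\rho$ perturbs $T_c$ by an operator of order $s-\rho$. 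With that correction, the remainder (a) is handled exactly as you describe and the dyadic summation closes. Overall the proposal is sound; step (b) just needs this sharper bookkeeping rather than the support claim as written.
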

\begin{remark}
It may be too early to give this remark here, but we think that it is a good idea to  keep the following simple observation in mind,   which will be very helpful to see the equivalence relations later. The simple observation is that if the  symbols $a$ and $b$ are all depends on $\nabla\,h$ instead of $\,h$, then the rough estimate (\ref{eqn700}) is sufficient to gain one derivative at low frequency part. 

\end{remark}

\begin{lemma}\label{adjoint}
 Let $m\in \R$,  $\rho >0$ and  $a\in \Gamma^{m}_{\rho}(\R^d)$. If we use $(T_{a})^{\ast}$ to denote the adjoint operator of $T_a$ and use $\bar{
a}$ to denote   the complex conjugate of $a$, then  $(T_{a})^{\ast}- T_{a^{\ast}}$ is of order $m-\rho$, where
\[
a^{\ast} = \sum_{|\alpha|< \rho} \frac{1}{i^{|\alpha|} \alpha !} \p_{\xi}^{\alpha} \p_{x}^{\alpha} \bar{a}.
\]
Moreover, the operator norm of $(T_{a})^{\ast} - T_{a^{\ast}}$ is bounded by $M^{m}_{\rho}(a).$
\end{lemma}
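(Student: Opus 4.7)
The plan is to compute $(T_a)^{\ast}$ directly on the Fourier side, compare its symbol with that of $T_{a^{\ast}}$ by a Taylor expansion in the frequency variable, and bound the remainder as an operator of order $m-\rho$.

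First, starting from the definition (\ref{eqn1001}) and the duality $\langle T_a f,g\rangle_{L^2}=\langle f,(T_a)^{\ast} g\rangle_{L^2}$, Plancherel's theorem together with the identity $\overline{\mathcal{F}_x(a)(\eta-\xi,\xi)}=\mathcal{F}_x(\bar a)(\xi-\eta,\xi)$ (and the reality of $\theta$) yields
$$\widehat{(T_a)^{\ast} g}(\xi) = \int_{\R^d} \mathcal{F}_x(\bar a)(\xi-\eta,\xi)\,\theta(\eta-\xi,\xi)\,\widehat{g}(\eta)\,d\eta.$$
The essential mismatch with $T_{a^{\ast}}g$ is that here the second slot of $\bar a$ is the \emph{output} frequency $\xi$, whereas in $T_{a^{\ast}}g$ the symbol is evaluated at the \emph{input} frequency $\eta$. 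To reconcile the two, I would Taylor expand $\mathcal{F}_x(\bar a)(\xi-\eta,\cdot)$ in its second slot around $\eta$ up to order $\rho$,
$$\mathcal{F}_x(\bar a)(\xi-\eta,\xi) = \sum_{|\alpha|<\rho}\frac{(\xi-\eta)^{\alpha}}{\alpha!}\,\partial_\xi^{\alpha}\mathcal{F}_x(\bar a)(\xi-\eta,\eta) + R_\rho(\xi,\eta),$$
and use the Fourier identity $(\xi-\eta)^{\alpha}\mathcal{F}_x(\bar a)(\xi-\eta,\eta)=i^{-|\alpha|}\mathcal{F}_x(\partial_x^{\alpha}\bar a)(\xi-\eta,\eta)$: the finite sum collapses to exactly $\mathcal{F}_x(a^{\ast})(\xi-\eta,\eta)$, producing the symbol of $T_{a^{\ast}}$.

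The operator $(T_a)^{\ast}-T_{a^{\ast}}$ is then split into two pieces: (i) the Taylor remainder $R_\rho(\xi,\eta)$ multiplied by the common cutoff $\theta(\eta-\xi,\xi)$, and (ii) a mismatch piece coming from $\theta(\eta-\xi,\xi)-\theta(\xi-\eta,\eta)$. For (i), Taylor's integral formula expresses $R_\rho$ as a sum over $|\alpha|=\rho$ of $(\xi-\eta)^{\alpha}$ times $\partial_\xi^{\alpha}\mathcal{F}_x(\bar a)(\xi-\eta,\eta+t(\xi-\eta))$, integrated in $t\in[0,1]$. Decomposing dyadically with $|\xi|\sim|\eta|\sim 2^k$ and $|\xi-\eta|\sim 2^{k_1}\ll 2^k$ on the support of $\theta$, the $\rho$ factors of $(\xi-\eta)$ convert into $\rho$ derivatives in $x$ of $\bar a$ (controlled in $W^{\rho,\infty}$ since $a\in\Gamma^m_\rho$), while $\partial_\xi^{\rho}a$ has order $m-\rho$ in $\xi$. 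Applying Lemma \ref{Snorm} then yields $\mathcal{S}^{\infty}_{k,k_1,k}$ norms $\lesssim 2^{(m-\rho)k}M^{m}_{\rho}(a)$, and summing dyadically via Lemma \ref{multilinearestimate} gives the desired bound $H^{\mu}\to H^{\mu-m+\rho}$ with constant $\lesssim M^{m}_{\rho}(a)$. The mismatch piece (ii) is supported in the residual region $|\xi-\eta|\sim|\eta|\sim|\xi|\gtrsim 1$, where both frequencies are comparable and bounded below; there $a$ is smooth and the corresponding bilinear symbol trivially defines an operator of arbitrary smoothing order, with norm again controlled by $M^{m}_{\rho}(a)$.

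The main obstacle I anticipate is the bookkeeping for the cutoff-mismatch piece (ii): one has to verify that the difference $\theta(\eta-\xi,\xi)-\theta(\xi-\eta,\eta)$ localizes away from $\xi=0$, so that the symbol is regular, and confirm that the resulting operator is majorized by $M^{m}_{\rho}(a)$ rather than by some larger $W^{k,\infty}$ norm of $a$. Once that is settled, the $(T_a)^{\ast}-T_{a^{\ast}}$ bound reduces to routine dyadic summation within the framework of Lemmas \ref{multilinearestimate} and \ref{Snorm}, and the order $m-\rho$ and norm $M^{m}_{\rho}(a)$ follow immediately.
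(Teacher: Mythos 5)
The paper does not prove this lemma itself: its ``proof'' is a one-line citation to \cite{alazard1}[Theorem 3.10], so there is no in-paper argument to compare against. Your blind proposal is therefore supplying a proof where the paper outsources one, and it follows the standard paradifferential route (Fourier-side computation of the adjoint symbol, Taylor expansion in the frequency slot around the input frequency, conversion of $(\xi-\eta)^\alpha$ factors into $x$-derivatives, and a dyadic remainder estimate). That is essentially the argument one finds in M\'etivier's notes \cite{para} and in Alazard--Burq--Zuily, and it fits cleanly into the paper's own technical framework (Lemmas \ref{Snorm} and \ref{multilinearestimate}).

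One small imprecision worth flagging in your piece (ii): the support of $\theta(\eta-\xi,\xi)-\theta(\xi-\eta,\eta)$ is not only the region $|\xi-\eta|\sim|\eta|\sim|\xi|\gtrsim 1$. Since $\theta(\zeta,\eta)$ also carries the low-frequency cutoff $|\eta|\geq 1$, the difference is additionally supported where one of $|\xi|,|\eta|$ sits near the threshold $\sim 1$ while $|\xi-\eta|$ is small, so that one cutoff is on and the other is transitioning. In that second region all frequencies are $O(1)$, so the symbol is bounded and the resulting operator is still of arbitrary smoothing order, with norm controlled by $M^{m}_{\rho}(a)$; the conclusion survives, but the support description should be split into the two cases so that the dyadic bookkeeping is honest. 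With that correction, and with the usual care that the operator bound $M^{m}_{\rho}(a)$ (which allows $|\alpha|\leq 2+\rho$ in $\xi$ and $W^{\rho,\infty}$ in $x$) indeed covers both the $\partial_\xi^\alpha$ with $|\alpha|=\rho$ appearing in the Taylor remainder and the extra $\xi$-derivatives consumed by Lemma \ref{Snorm}, the proposal closes.
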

\begin{proof}
See \cite{alazard1}[Theorem 3.10].
\end{proof}
\begin{remark}
In most  applications of Lemma \ref{adjoint}, we have $m\leq 1$. If we let $\rho=1$ in above lemma, then it's easy to see $a^{\ast}= \bar{a}$. If moreover  $a$ is real, then $a^{\ast}=\bar{a}=a$. 
\end{remark}

\section{Dirichlet-Neumann operator}\label{DNoperator}

The main goal of this section is to study various properties of the Dirichlet-Neumann operator, which provide a foundation to do the process of paralinearization and symmetrization in section \ref{parasymm} and obtain the new energy estimate (\ref{energyestimate}) in section \ref{energyestimatesection}.  The study of the Dirichlet-Neumann operator is mainly reduced to study the velocity potential inside the water region $\Omega(t)$.



Recall the smallness condition (\ref{smallnesscondition}) of the initial data.  From the local wellposedness result of the gravity waves system (\ref{waterwave}), we know that there exists a   positive time $T$, such that the following estimate holds, 
\begin{equation}\label{smallnessestimate}
\sup_{t\in[0,T]} \| (h, \Lambda\psi)(t)\|_{\widetilde{W^4}}\leq 2\delta,
\end{equation}
which means that the $L^\infty$ norm of solution remains small in the time interval $[0,T].$ Throughout the rest of this paper, we restrict ourself to the time interval $[0,T].$

\subsection{Type I formulation of the Laplace equation (\ref{harmoniceqn})}

In this subsection, we   reduce the Laplace equation  (\ref{harmoniceqn})  into a favorable formulation such that we can solve it and identify the fixed point type structure inside the Laplace equation, which further enables us to estimate the Dirichlet-Neumann operator.

We   do change of variables and map the water region $\Omega(t)$ to the strip $\mathcal{S}:=\R^2\times[-1,0]$ as follows,
\[
(x,y)\rightarrow (x,z), \quad z: = \displaystyle{\frac{y-h(t,x)}{h(t,x)+1}}.
\]
Very naturally, the inverse of transformation is given as follows, 
\[
y= h + (h+1)z.
\]

Define the velocity potential in the $(x,z)$ coordinate system as $\varphi(x,z):= \phi(x, h+ (h+1)z)$. From direct computations, we have    the following identities, 
\begin{equation}\label{eqn640}
\phi(x,y)= \varphi(x, \frac{y-h}{1+h}), \quad \p_y \phi = \frac{\p_z\varphi }{1+h},\quad \p_y^2 \phi = \frac{\p_z^2 \varphi}{(1+h)^2},
\end{equation}
\begin{equation}\label{eqn641}
\p_{x_i}  \phi = \p_{x_i} \varphi + \p_z \varphi \big[\frac{-\p_{x_i}h}{1+h}- \frac{(y-h)\p_{x_i} h}{(1+h)^2}\big] = \p_{x_i}\varphi - \frac{(y+1)\p_{x_i}h}{(1+h)^2}\p_z\varphi,
\end{equation}
\[\p_{x_i}^2 \phi = \p_{x_i}^2 \varphi - 2 \frac{(y+1)\p_{x_i} h}{(1+ h)^2}\p_z\p_{x_i}\varphi + [\frac{-(y+1)\p_{x_i}^2 h}{(1+ h)^2} + 2\frac{(y+1)(\p_{x_i} h)^2}{(1+ h)^3}] \p_z\varphi + \frac{(y+1)^2(\p_{x_i} h)^2}{(1+h)^4} \p_z^2\varphi.
\]
From above identities and (\ref{harmoniceqn}), it's easy to derive the following equation,
\begin{equation}\label{changeco1}
(\Delta_{x}+\p_y^2)\phi = 0 \Longrightarrow P_{x,z}\varphi:=[ \Delta_{x}+ \tilde{a}\p_z^2 +  \tilde{b}\cdot \nabla \p_z + \tilde{c}\p_z ] \varphi=0, 
\end{equation}
where
\begin{equation}\label{coeff}
\tilde{a}= \frac{(y+1)^2|\nabla h|^2}{(1+h)^4}  + \frac{1}{(1+h)^2}=\frac{1+(z+1)^2|\nabla h|^2}{(1+h)^2},
\end{equation}
\begin{equation}\label{coeff1}
\tilde{b}=- 2 \frac{(y+1)\nabla h}{(1+h)^2} = \frac{-2(z+1)\nabla h}{1+h},\quad \tilde{c}= \frac{-(z+1)\Delta_{x} h}{(1+h)} + 2\frac{(z+1)|\nabla h|^2}{(1+h)^2}. 
\end{equation}

To sum up, we can reduce the Laplace equation (\ref{harmoniceqn}) with two boundary conditions    in terms of  ``$\varphi$'' as follows,
\begin{equation}\label{harmonic}
P_{x,z} \varphi =0, \quad \varphi\big|_{z=0}=\psi, \quad \p_z\varphi\big|_{z=-1} = 0, \quad (x,z)\in \R^2\times [-1,0].
\end{equation}

\subsection{Type II formulation of the Laplace equation (\ref{harmoniceqn}) }\label{type2}
In this subsection, we reduce the Laplace equation (\ref{harmoniceqn}) into another favorable formulation, which will be used  to do the paralinearization of the Dirichlet-Neumann operator in subsection \ref{paralinearDN}.  

We remark that we don't use the type I formulation (\ref{harmonic}) to do the paralinearization process because the coefficients $\tilde{a}, \tilde{b}, \tilde{c}$ in (\ref{coeff}) and (\ref{coeff1})  are very complicated, which complicate the paralinearization process and prevent us to see clearly the principle symbol of the Dirichlet-Neumann operator.

Recall the smallness condition (\ref{smallnessestimate}). Since  the height of interface  is very small, we know that  there exists a curve parallel to the interface $\Gamma(t)$ with depth  $1/2$ inside $\Omega(t)$. More precisely, we have
\[
\Omega_1(t):=\{(x,y): x\in \R^2, \quad \,h(t,x)-1/2\leq y \leq \,h(t,x)\}, \quad \Omega_1(t) \subset \Omega(t).
\]
  Define 
 \begin{equation}\label{region2}
\Omega_2(t):=\{(x,y): x\in \R^2, \quad \,h(t,x)-1/4\leq y \leq \,h(t,x)\}, \quad \Omega_2(t)\subset \Omega_1(t) \subset \Omega(t),
\end{equation}
\be\label{ee1}
\tilde{\phi}(x,y):= \chi(y-\,h(t,x)) \phi(x,y), \quad (x,y)\in \Omega_1(t),
 \quad \chi(z)=1\, \textup{if $z\geq -1/4$}, \textit{supp}(\chi)\subset [-1/2,0],\ee
where $\chi(x)$  is a fixed Schwarz function. 

Recall the Laplace equation (\ref{harmoniceqn}). From   (\ref{ee1}), it is easy to derive the  following identities
\begin{equation}\label{cutoffharmonic}
\Delta_{x,y}\tilde{\phi} = \tilde{g}:= \Delta_{x,y}[\chi \phi ]- \chi \Delta_{x,y}\phi, \quad (x,y)\in \Omega_1(t),
\end{equation}
\[
\tilde{\phi}(x,y)= {\phi}(x,y),\quad \tilde{g}(x,y)=0, \quad (x,y)\in \Omega_2(t).
\]

We can map the water region $\Omega_1(t)$ to the strip $\mathcal{S}':=\R^2 \times [-1/2,0]$ by changing the coordinate system as follows, 
\[
(x, y)\rightarrow (x, w),\quad w:= y-\,h(t,x).
\]
Define the velocity potential in the $(x,w)$ coordinate system as  $\Phi(x,w):= \tilde{\phi}(x, \omega+\,h(t,x)).$ Hence $\tilde{\phi}(x,y)= \Phi(x,y -\,h(t,x))$. From (\ref{cutoffharmonic}), it's easy to verify that the following equality holds, 
\begin{equation}\label{changco2}
\mathcal{P}_{x,w} \Phi:=[ \Delta_{x}+ a'\p_w^2 +  b'\cdot \nabla \p_w + c'\p_w ]\Phi = g'(x,w):= \tilde{g}(x,\omega+ h(t,x)), 
\end{equation}
where
\begin{equation}\label{coeff2}
a'= 1 + |\nabla \,h|^2, \quad b'= -2\nabla \,h, \quad c'= -\Delta \,h. 
\end{equation}

\begin{remark}
From  (\ref{coeff}),  (\ref{coeff1}), and (\ref{coeff2}), it is easy to see that the coefficients in the equation (\ref{changco2}) satisfied by ``$\Phi$''  are much easier and more favorable than the coefficients in  the equation (\ref{changeco1}) satisfied by ``$\varphi$''.  However, the equation (\ref{changco2})  satisfied by $\Phi$ cannot be used as the starting point because we don't know the   estimates of $\Phi$ at the first place. 
\end{remark}

From above definitions,   the following identities hold inside the water region $\Omega_2(t)$ (see (\ref{region2})) and its corresponding regions in the new coordinate systems,
\[
\Phi(x,w)= \varphi(x,\frac{w}{1+\,h}), \quad \varphi(x,z)= \Phi(x,(1+\,h)z),\quad  (x,w)\in \R^2\times[-1/4, 0],
\]
\be\label{eqq4}
\p_{x_i}\Phi = \p_{x_i}\varphi -\frac{w \p_z\varphi \p_{x_i} h }{(1+h)^2}
\quad \p_{w} \Phi = \frac{\p_{z}\varphi}{1+\,h}.
\ee

From (\ref{eqn640}) and (\ref{eqq4}), the Dirichlet-Neumann operator $G( h)\psi $ in terms of ``$\varphi$'' and ``$\Phi$''  and  the quadratic term of $G(\,h)\psi$  are given as follows,
\begin{equation}\label{DN1}
G( h)\psi = [-\nabla\,h\cdot\nabla \phi + \p_y\phi]\big|_{y=\,h} = \frac{1+|\nabla \,h|^2}{1+\,h} \p_z \varphi \big|_{z=0} -\nabla\psi \cdot \nabla \,h,\quad 
\end{equation}
\begin{equation}\label{DN2}
G(h)\psi=(1+|\nabla \,h|^2) \p_{w}\Phi\big|_{w=0} - \nabla\,h \cdot \nabla\psi,
\end{equation}
\begin{equation}\label{quadraticformula}
\Lambda_{2}[G(\,h)\psi]= \Lambda_{2}[\p_z\varphi\big|_{z=0}] - \Lambda_1[\p_z\varphi\big|_{z=0}]\,h - \nabla\psi \cdot \nabla \,h. 
\end{equation}

\subsection{A fixed point type formulation for the Dirichlet-Neumann operator} In this subsection, our main goal is   to obtain basic estimates of the Dirichlet-Neumann operator with special attention to the low frequency part, which will further help us to obtain a new energy estimate. 

To this end, we  study the reduced Laplace equation (\ref{harmonic}) and formulate $\nabla_{x,z}\varphi$ into a fixed point type formulation, which enables us to use a fixed point type argument. 

We first rewrite the equation (\ref{harmonic}). After moving all nonlinear terms to the right hand, we can rewrite the  equation (\ref{harmonic}) as follows,
\be\label{eqq40}
\p_z^2\varphi + \Delta_x\varphi = (\p_z -\d)(\p_z +\d) \varphi= g(z) := (1-\tilde{a})\p_z^2 \varphi - \tilde{b}\cdot \nabla \p_z \varphi - \tilde{c}\p_z\varphi.
\ee

Now, we will solve $\varphi(z)$ from (\ref{eqq40}) by treating ``$g(z)$''  in (\ref{eqq40})  as a given nonlinearity.  Define $\tilde{h}(x,z):= (\p_z - \d)\varphi$. Very naturally, we have
\begin{equation}
\left\{\begin{array}{l}
(\p_z + \d)\tilde{h} = g,\\
\\
(\p_z- \d)\varphi = \tilde{h}, \quad \varphi\big|_{z=0}=\psi,\p_{z}\varphi\big|_{z=-1}=0.\\
\end{array}\right.
\end{equation}
We can solve above system of equations with $\tilde{h}(-1)$ to be determined as follows,
\[
\tilde{h}(z) = e^{-z\d}\tilde{h}(-1) + \int_{-1}^{z} e^{-(z-z')\d} g(z') d z',\quad 
\]
\[
\varphi(z) = e^{z\d} \varphi(0) + \int_{0}^{z} e^{(z-z')\d} \tilde{h}(z') d z'
= e^{z\d}\psi - \int_{z}^{0} e^{(z-z')\d} [ e^{-z'\d} \tilde{h}(-1)\]
\[ + \int_{-1}^{z'} e^{-(z'-s)\d} g(s) d s] d z'
=e^{z\d}\psi -\h \d^{-1}[e^{-z\d}- e^{z\d} ]\tilde{h}(-1)\]
\[- \int_{-1}^{z} \int_{z}^{0} e^{(z+s-2z')\d} g(s) d z' d s - \int_{z}^{0} \int_{s}^0 e^{(z+s-2z')\d} g(s) d z' d s
\]
\begin{equation}\label{eqn10}
= e^{z\d}\psi -\h\d^{-1}[e^{-z\d}- e^{z\d} ]\tilde{h}(-1) +\h \int_{-1}^{0} \d^{-1}e^{(z+s)\d} g(s) d s- \h \int_{-1}^{0} \d^{-1} e^{-|z-s|\d} g(s) d s. 
\end{equation}

The unknown $\tilde{h}(-1)$ is determined by the Neumann type boundary condition $\p_{z}\varphi|_{z=-1}=0$. We calculate $\p_z \varphi $ from the formula (\ref{eqn10}) and have the following equality,
\[
\p_z \varphi = \d e^{z\d}\psi  + \h [e^{z\d} + e^{-z\d}]\tilde{h}(-1) + \h \int_{-1}^{0} e^{(z+s)\d} g(s) ds  - \h \int_{-1}^{0} e^{-|z-s|\d} \textup{sign}(s-z) g(s) d s.
\]
After evaluating above equality at point $z=-1$, we have
\begin{equation}
\tilde{h}(-1)= - \frac{2\d e^{-\d}\psi}{e^{-\d} + e^{\d}} -\int_{-1}^0 \frac{e^{(s-1)\d} - e^{-(s+1)\d}}{e^{-\d} + e^{\d}} g(s)\, d s ,
\end{equation}
which further gives us 
\begin{equation}
\p_z \varphi =  \frac{e^{(z+1)\d}- e^{-(z+1)\d}}{e^{-\d}+e^{\d}} \d \psi -\h \frac{e^{z\d}+e^{-z\d}}{e^{-\d} +e^{\d}} \int_{-1}^0 [e^{(s-1)\d}- e^{-(s+1)\d}] g(s) \, d s 
\end{equation}
\begin{equation}
+ \h \int_{-1}^{0} e^{(z+s)\d} g(s) ds  - \h \int_{-1}^{0} e^{-|z-s|\d} \textup{sign}(s-z) g(s) d s.
\end{equation}
Moreover, we can reduce (\ref{eqn10})    as follows, 
\[
\varphi(z) = \Big[ \frac{e^{-(z+1)\d}+ e^{(z+1)\d}}{e^{-\d} + e^{\d}}\Big]\psi + \h\d^{-1}\frac{e^{-z\d}- e^{z\d} }{e^{-\d} + e^{\d}}\int_{-1}^0 \big[e^{(s-1)\d} - e^{-(s+1)\d}\big] g(s)\, d s
\]
\begin{equation}\label{eqn11}
+\h \int_{-1}^{0} \d^{-1}e^{(z+s)\d} g(s) d s - \h \int_{-1}^{0} \d^{-1} e^{-|z-s|\d} g(s) d s. 
\end{equation}

However, we can not use the formulation  (\ref{eqn11}) to estimate the velocity potential and the Dirichlet-Neumann operator because $g(z)$ actually depends on the  velocity potential $\varphi(z)$, see (\ref{eqq40}). 

To get around this issue. we observe that there exists a    fixed point type structure inside $g(z)$. Recall (\ref{eqq40}), (\ref{coeff}), and (\ref{coeff1}). Note that
\[
g= \p_z [ \frac{2\,h+\,h^2 - (z+1)^2 |\nabla\,h|^2}{(1+\,h)^2} \p_z \varphi +\frac{2(z+1)\nabla\,h\cdot \nabla\varphi}{1+\,h}] - \frac{2\nabla \,h \cdot \nabla\varphi}{1+\,h}  + \frac{(z+1)\Delta \,h}{1+\,h}\p_z \varphi,
\]
and
\[
 \frac{(z+1)\Delta \,h}{1+\,h}\p_z \varphi = \nabla \cdot [\frac{(z+1)\nabla \,h \p_z\varphi}{1+\,h}] + \frac{(z+1)|\nabla \,h|^2 \p_z\varphi}{(1+\,h)^2} -\p_z[ \frac{(z+1)\nabla \,h \cdot \nabla \varphi}{1+\,h} ] + \frac{\nabla \,h\cdot \nabla \varphi}{1+\,h}.
\]
Hence, we can decompose the nonlinearity $g(z)$ into three parts as follows, 
\be\label{eqq20}
g(z)= \p_z g_1(z)+g_2(z) + \nabla\cdot g_3(z),
\ee
 where
\begin{equation}\label{eqn12}
g_1(z) =  \frac{2\,h+\,h^2 - (z+1)^2 |\nabla\,h|^2}{(1+\,h)^2} \p_z \varphi +\frac{(z+1)\nabla\,h\cdot \nabla\varphi}{1+\,h},\quad g_1(-1)=0,
\end{equation}
\begin{equation}\label{eqn14}
g_2(z) =\frac{(z+1)|\nabla \,h|^2 \p_z\varphi}{(1+\,h)^2}  - \frac{\nabla \,h \cdot \nabla\varphi}{1+\,h} ,\quad g_3(z)= \frac{(z+1)\nabla \,h \p_z\varphi}{1+\,h}.
\end{equation}
To simplify the notation, we define 
\begin{equation}\label{tildeeta}
 \tilde{\,h}_1:= \frac{2\,h+\,h^2}{(1+\,h)^2}, \quad    \tilde{h}_2:=\frac{|\nabla \,h|^2}{(1+\,h)^2}, \quad \tilde{h}_3:=  \frac{\nabla \,h}{1+\,h}.
 \ee
As a result, we have
\begin{equation}\label{eqn782}
g_1(z)=\tilde{\,h}_1 \p_z \varphi - (z+1)^2 \tilde{h}_2  \p_z\varphi + (z+1) \tilde{h}_3\cdot \nabla \varphi,
\end{equation}
\begin{equation}\label{eqn783}
g_2(z)=  (z+1) \tilde{h}_2\p_z \varphi - \tilde{h}_3\cdot \nabla \varphi, \quad g_3(z)=(z+1)   \tilde{h}_3 \p_z \varphi.
\end{equation}
Note that $g_1(z)$, $g_2(z)$, and $g_3(z)$ are all linear with respect to $\nabla_{x,z}\varphi(z)$.

    After decomposing  $g(s)$ in (\ref{eqn11}) into three parts: ``$\p_s g_1 $'', ``$g_2$'' and ``$\nabla\cdot g_3$'', we  do integration by parts in ``$s$'' to move the derivative ``$\p_s$'' in front of   ``$\p_s g_1$'' around.  As a result,   the following equality holds, 
\[
\varphi(z)=\Big[ \frac{e^{-(z+1)\d}+ e^{(z+1)\d}}{e^{-\d} + e^{\d}}\Big]\psi + \h\d^{-1}\frac{e^{-z\d}- e^{z\d} }{e^{-\d} + e^{\d}}\int_{-1}^0 \big[e^{(s-1)\d}  (g_2 +\nabla\cdot g_3 -\d g_1)  \]
\[- e^{-(s+1)\d}(g_2 + \nabla \cdot g_3+\d g_1)  \big] d s +\h \int_{-1}^{0} \d^{-1}e^{(z+s)\d} [g_2 +\nabla\cdot g_3 -\d g_1] d s
\]
\begin{equation}\label{solution}
  - \h \int_{-1}^{0} \d^{-1} e^{-|z-s|\d} [g_2 +\nabla\cdot g_3 -\textup{sign}(z-s)\d g_1] d s. 
\end{equation}
Now, we know that the nonlinearity in (\ref{solution}) is linear with respect to $\nabla_{x,z}\varphi$. 

To see the fixed point type structure of $\nabla_{x,z}\varphi$, we take the derivative  ``$\nabla_{x,z}$'' on the both hand  sides of (\ref{solution}). As a result,  we   derive the  fixed point type formulation for $\nabla_{x,z}\varphi$ as follows, 
\[
\nabla_{x,z}\varphi = \Bigg[ \Big[ \frac{e^{-(z+1)\d}+ e^{(z+1)\d}}{e^{-\d} + e^{\d}}\Big]\nabla\psi ,  \frac{e^{(z+1)\d}- e^{-(z+1)\d}}{e^{-\d}+e^{\d}} \d \psi\Bigg] + \]
\[
\Bigg[\h \frac{\nabla}{\d}\frac{e^{-z\d}- e^{z\d} }{e^{-\d} + e^{\d}}\int_{-1}^0  \big[e^{(s-1)\d}  (g_2 + \nabla \cdot g_3  -\d g_1)  - e^{-(s+1)\d}(g_2 + \nabla \cdot g_3+\d g_1) \big] d s,\]
\[ -\h \frac{e^{z\d}+e^{-z\d}}{e^{-\d} +e^{\d}} \int_{-1}^0  \big[e^{(s-1)\d}  (g_2+\nabla \cdot g_3 -\d g_1)  - e^{-(s+1)\d}(g_2 +\nabla\cdot g_3 +\d g_1) \big] d s 
  \Bigg] + 
\]
\[
\Bigg[ \h \int_{-1}^{0} \frac{\nabla}{\d}e^{(z+s)\d} [g_2+ \nabla \cdot g_3-\d g_1] d s - \h \int_{-1}^{0} \frac{\nabla}{\d} e^{-|z-s|\d} [g_2+ \nabla \cdot g_3- \textup{sign}(z-s)\d g_1] d s,\]
\begin{equation}\label{fixedpoint2}
\h \int_{-1}^{0} e^{(z+s)\d} [g_2+ \nabla \cdot g_3-\d g_1] ds  - \h \int_{-1}^{0} e^{-|z-s|\d}  [\textup{sign}(s-z)(g_2+ \nabla \cdot g_3) + \d g_1]d s\Bigg]+ [\mathbf{0}, g_1(z)].\end{equation}
To simplify the notation, we define  operators as follows,
\begin{equation}\label{equation300}
K_1(z,s):=\Big[ \frac{\nabla}{2\d}\frac{e^{-z\d}- e^{z\d} }{e^{-\d} + e^{\d}}e^{(s-1)\d} +\frac{\nabla}{2\d}e^{(z+s)\d} ,  -\h \frac{e^{z\d}+e^{-z\d}}{e^{-\d} +e^{\d}}e^{(s-1)\d} + \frac{1}{2}e^{(z+s)\d}  \Big],
\end{equation}
\begin{equation}\label{equation301}
K_2(z,s):= \Big[ \frac{\nabla}{2\d}\frac{e^{-z\d}- e^{z\d} }{e^{-\d} + e^{\d}}e^{-(s+1)\d}\,\, , \,\,
  -\h \frac{e^{z\d}+e^{-z\d}}{e^{-\d} +e^{\d}} e^{-(s+1)\d} \Big],
\end{equation}
\begin{equation}\label{equation302}
K_3(z,s)= \Big[  \frac{\nabla}{2\d}e^{-|z-s|\d} \,\, , \,\, \frac{1}{2}e^{-|z-s|\d}\textup{sign($s-z$)}\Big].
\end{equation}
With above operators, we can rewrite (\ref{fixedpoint2}) as follows,
\[\nabla_{x,z}\varphi = \Bigg[ \Big[ \frac{e^{-(z+1)\d}+ e^{(z+1)\d}}{e^{-\d} + e^{\d}}\Big]\nabla\psi ,  \frac{e^{(z+1)\d}- e^{-(z+1)\d}}{e^{-\d}+e^{\d}} \d \psi\Bigg] + [\mathbf{0}, g_1(z)]+ \]
\[
+\int_{-1}^{0} [K_1(z,s)-K_2(z,s)-K_3(z,s)](g_2(s)+\nabla \cdot g_3(s))  ds \]
\begin{equation}\label{fixedpoint}
+\int_{-1}^{0} K_3(z,s)\d\textup{sign($z-s$)}g_1(s)  -\d [K_1(z,s) +K_2(z,s)]g_1(s)\, d  s.
\end{equation}
  
To make sure that we can close the fixed point type argument, we need to check the boundednesses of operators $K_i(z,s)$ such that the issue of losing derivative doesn't exist. More precisely,   the following Lemma holds.

\begin{lemma}\label{integraloperator}
For $k, \gamma \geq 0$, we have the following estimates, 
\begin{equation}\label{eqn2300}
\sum_{i=1,2,3}\| \int_{-1}^{0} K_i(z,s)\nabla g(s) ds \|_{L^\infty_z H^k} + \| \int_{-1}^{0} K_i(z,s) g(s) ds\|_{L^\infty_z H^k} \lesssim \|g(z)\|_{L^\infty_z H^k},
\end{equation}
\[
\sum_{i=1,2,3}\| \int_{-1}^{0} K_i(z,s)\nabla g(s) ds\|_{L^\infty_z \widetilde{W^\gamma}}
\]
\begin{equation}\label{eqn2301}
 + \| \int_{-1}^{0} [K_1(z,s)-K_2(z,s)-K_3(z,s)] g(s) ds \|_{L^\infty_z\widetilde{W^\gamma}} \lesssim \| g(z)\|_{L^\infty_z \widetilde{W^\gamma}}.
\end{equation}
\end{lemma}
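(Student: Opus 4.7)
The plan is to reduce both estimates to frequency-localized Fourier multiplier bounds. For fixed $(z,s)\in [-1,0]^2$, each $K_i(z,s)$ acts on the spatial variable as a Fourier multiplier whose symbol is an explicit function of $\xi$, and hence commutes with the Littlewood-Paley projections $P_k$. Applying Minkowski's inequality in $s$, the problem reduces to bounding the symbols of $K_i(z,s)\n$ and $K_i(z,s)$ (or the indicated linear combinations) on each dyadic frequency block, uniformly in $z\in[-1,0]$ and with an integrable tail in $s\in[-1,0]$. Two elementary facts will be used repeatedly: for $z\in[-1,0]$,
\[
\Big|\frac{\sinh(z|\xi|)}{\cosh(|\xi|)}\Big|+\Big|\frac{\cosh(z|\xi|)}{\cosh(|\xi|)}\Big|\lesssim e^{-(z+1)|\xi|}\leq 1,
\]
and $\int_{-1}^{0} e^{(s-1)|\xi|}\,ds = (1-e^{-|\xi|})/|\xi| \lesssim \min(1,|\xi|^{-1})$.

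For the Sobolev estimate (\ref{eqn2300}), one works directly via Plancherel, since the $H^k\to H^k$ boundedness of a Fourier multiplier is controlled by the $L^\infty_\xi$ norm of its symbol. A direct computation shows that each scalar and vector component of $K_i(z,s)$, paired with or without an extra $\n$, has a symbol that is uniformly bounded on $\R^2$: the $1/|\xi|$ singularity is compensated either by the $\n$ when present, by the linear vanishing of $\sinh(z|\xi|)/\cosh(|\xi|)$ at $\xi=0$, or by the exponential decay $e^{(s-1)|\xi|}$ and $e^{-(s+1)|\xi|}$ at high frequencies. Combined with $L^1$ integrability in $s\in[-1,0]$, Minkowski's inequality yields (\ref{eqn2300}).

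For the $\widetilde{W^\gamma}$ estimate (\ref{eqn2301}), the plan is to apply $P_k$ for each $k\in\mathbb{Z}$ and bound $\|m_i(z,s,\cdot)\psi_k\|_{\mathcal{S}^\infty}$ via Lemma \ref{Snorm}; for $k\geq 0$ the factor $1/|\xi|\sim 2^{-k}$ is absorbed by the exponential decay in $s$, while for $k\leq 0$ the symbol is smooth and Lemma \ref{Snorm} reduces the task to uniform control of finitely many derivatives on the support of $\psi_k$. When the input is $\n g$, the extra $\n$ cancels the $1/\d$ and each $K_i$ is treated individually. The main obstacle is the second sub-estimate in (\ref{eqn2301}), where the input is $g$ without a derivative: the factor $\nabla/\d$ in the first component of each $K_i$ is a Riesz-type multiplier that is not smooth at $\xi=0$, so the three operators must be grouped together in order to exploit cancellations. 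Taylor expanding at $\xi=0$,
\[
\frac{e^{-z|\xi|}-e^{z|\xi|}}{e^{-|\xi|}+e^{|\xi|}}\big(e^{(s-1)|\xi|}-e^{-(s+1)|\xi|}\big) = (-z|\xi|)(2s|\xi|) + O(|\xi|^{3}),
\]
\[
e^{(z+s)|\xi|}-e^{-|z-s||\xi|} = \big[(z+s)-\textup{sign}(z-s)|z-s|\big]|\xi|+O(|\xi|^{2}),
\]
so both bracketed expressions in the first component of $K_{1}-K_{2}-K_{3}$ carry enough extra factors of $|\xi|$ to neutralize the $1/|\xi|$ in front. Hence the full symbol of $K_{1}-K_{2}-K_{3}$ is smooth at the origin, Lemma \ref{Snorm} applies uniformly on each low-frequency block, and summing over $k$ produces the desired bound (\ref{eqn2301}).
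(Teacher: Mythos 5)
Your overall strategy is the same as the paper's (frequency-localize, bound the localized symbols via Lemma \ref{Snorm}, and integrate in $s$), but there is a concrete gap in the way you handle the Sobolev estimate (\ref{eqn2300}), and it matters. The assertion that the symbol of $K_i(z,s)\nabla$ is ``uniformly bounded on $\R^2$'' at each fixed $(z,s)$ is false. Take, e.g., the first component of $K_3(z,s)\nabla$, whose symbol is $\tfrac{\xi\otimes\xi}{2|\xi|}e^{-|z-s||\xi|}$, or the $e^{(z+s)\d}$ piece of $K_1(z,s)\nabla$; at high frequency these grow like $|\xi|$, and near the diagonal $z=s$ (or $z=s=0$) there is no exponential decay to compensate, so $\sup_\xi|m(z,s,\xi)|\sim |z-s|^{-1}$ (respectively $|z+s|^{-1}$). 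Consequently the naive Minkowski bound $\int_{-1}^0\|K_i(z,s)\nabla g(s)\|_{H^k}\,ds\lesssim \int_{-1}^0\sup_\xi|m(z,s,\xi)|\,ds\,\|g\|_{L^\infty_sH^k}$ fails: the $s$-integral of $|z-s|^{-1}$ diverges. The three ``compensation mechanisms'' you list all address the $1/|\xi|$ singularity at $\xi=0$, not the $|\xi|$-growth of $K_i\nabla$ near the diagonal in $(z,s)$, which is the real obstruction here.

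The missing idea is exactly what the paper's kernel estimate (\ref{equation332}) provides: one must localize to a dyadic block $\psi_{k_1}$ \emph{before} estimating the $s$-integral, so that on that block the symbol is $\lesssim 2^{k_1}e^{-c2^{k_1}|z-s|}$ and $\int_{-1}^0 2^{k_1}e^{-c2^{k_1}|z-s|}\,ds\lesssim 1$ uniformly in $k_1\geq 0$. The paper encodes this by differentiating the symbol in $\xi$ and showing $|K_{i;k}(z,s,y)|\lesssim 2^{3k}(1+2^k|y|+2^k|z-s|)^{-10}$, whence $\|K_{i;k}\|_{L^1_{s,y}}\lesssim 1$ uniformly in $k$, and then closing via Young's inequality on each block. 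Your observation $\int_{-1}^0 e^{(s-1)|\xi|}\,ds\lesssim\min(1,|\xi|^{-1})$ is this mechanism in spirit, but you never actually couple it with the $|\xi|$ growth; instead you revert to an unjustified pointwise symbol bound. For the $\widetilde{W^\gamma}$ estimate your cancellation at $\xi=0$ for $K_1-K_2-K_3$ is the right idea and matches the paper's (\ref{equation330}), though two small remarks: your Taylor expansion has a sign slip ($(z+s)+|z-s|$, not $(z+s)-\textup{sign}(z-s)|z-s|$), which does not affect the conclusion since both are $O(|\xi|)$; and ``smooth at the origin'' overclaims --- the combined symbol only vanishes to first order (giving the paper's $2^{k_{1,-}}$ gain), which is enough for the $\mathcal{S}^\infty$ bound but is not $C^\infty$ in $\xi$ near $0$. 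You should also record, as the paper does, that the second (scalar) component of $K_1-K_2-K_3$ approaches the constant $\tfrac12(1-\textup{sign}(s-z))$ as $\xi\to 0$; this constant must be split off and handled separately, since it is not $O(|\xi|)$ but is trivially bounded after integrating in $s$.
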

\begin{proof}
We first prove the desired estimate (\ref{eqn2300}).
Recall (\ref{equation300}), (\ref{equation301}), and (\ref{equation302}). From   Lemma \ref{Snorm},   the following estimates hold,	
\begin{equation}\label{equation330}
\sup_{z,s\in[-1,0]}\|\mathcal{F}^{-1}[\mathcal{F}\big([K_1(z,s)-K_2(z,s)-K_3(z,s)-[0,(1-\textup{sign($s-z$)})/2]]\big)(\xi)\psi_{k_1}(\xi)] \|_{L^1} \lesssim 2^{k_{1,-}},  
\end{equation}
\begin{equation}\label{equation320}
\sup_{z,s\in[-1,0]}\sum_{i=1,2,3}\|\mathcal{F}^{-1}[\mathcal{F}\big(K_i(z,s)\big)(\xi)\psi_{k_1}(\xi)] \|_{L^1} \lesssim 1. 
\end{equation}

We will use above estimates for the case when $k_1< 0$. However, when $k_1\geq 0$, we can not use the estimate  (\ref{equation320}) directly to estimate the left hand side of (\ref{eqn2300}), otherwise	 we lose one derivative. An  important observation is that the integration with respect to ``$s$'' actually compensates the loss.

 For any fixed $k\geq 0, k\in \mathbb{Z}$, we have the following formulation in terms of kernel,
\begin{equation}\label{equation331}
\int_{-1}^{0} K_i(z,s)\nabla P_k[g(s)] ds  = \int_{-1}^0\int_{\R^2} K_{i;k}(z,s,y) g(s,x-y) d y d s, \quad 
\end{equation}
where
\[
K_{i;k}(z,s,y) = \int_{\R^2} e^{i y \cdot \xi} \mathcal{F}(K_1(z,s))(\xi)\psi_k(\xi) \xi d \xi.
\]
After integration by parts in $\xi$ many times, the following point-wise estimate holds for $i\in\{1,2,3\}$,
\begin{equation}\label{equation332}
|K_{i;k}(z,s,y)| \lesssim 2^{3k} \big( 1 +2^k |y| + 2^k|z-s| \big)^{-10},
\end{equation}
which further implies that the kernel ``$K_{i;k}(z,s,y)$'' belongs to $L^1_{s,y}$ for fixed $z$.
Therefore, from (\ref{equation320}) and (\ref{equation332}), the following estimate holds, 
\[
\big|\textup{The left hand side of (\ref{eqn2300})}\big|^2 \lesssim \sum_{k_1\leq 0} \| P_{k_1}[g(z)]\|_{L^\infty_z L^2}^2 + \sum_{i=1,2,3} \sum_{k_1\geq 0 }  2^{2kk_1} \| K_{i;k_1}(z,s)\|_{L^1_{s,y}}^2\]
\[\times \| P_{k_1}[g(z)]\|_{L^\infty_z L^2}^2   \lesssim \| g(z)\|_{L^\infty_z H^k}^2. 
\]
 Hence finishing the proof of (\ref{eqn2300}).  Very similarly, from (\ref{equation330}), (\ref{equation320}), and (\ref{equation332}), our desired estimate (\ref{eqn2301}) follows in the same way.  
\end{proof}

From (\ref{fixedpoint}) and estimates in Lemma \ref{integraloperator}, now, it is clear that we can estimate $\nabla_{x,z}\varphi$ by using a fixed point type argument.  

However, if we do it roughly, then the resulted estimate will not tell the difference between $\nabla_x \varphi$ and $\p_z \varphi$. To  capture the fact that $\p_z \varphi$ actually has two derivatives at the low frequency part while $\nabla_x \varphi$ only has one derivative, we 	decompose  $\nabla_{x,z}\varphi$ as follows, 
\be\label{eqq51}
\nabla_{x,z}\varphi = \Lambda_{1}[\nabla_{x,z}\varphi] + \Lambda_{\geq 2}[\nabla_{x,z}\varphi].
\ee
From (\ref{fixedpoint}), it's easy to see that  $ \Lambda_{1}[\nabla_{x,z}\varphi]$ is given as follows, 
 \begin{equation}\label{eqn773}
 \Lambda_{1}[\nabla_{x,z}\varphi]= \Bigg[ \Big[ \frac{e^{-(z+1)\d}+ e^{(z+1)\d}}{e^{-\d} + e^{\d}}\Big]\nabla\psi ,  \frac{e^{(z+1)\d}- e^{-(z+1)\d}}{e^{-\d}+e^{\d}} \d \psi\Bigg].
 \end{equation}
From (\ref{eqn773}), it is easy to see that $\Lambda_{1}[\p_z \varphi]$ has two derivatives at the low frequency part. Now, the goal  is reduced to estimate  $\Lambda_{\geq 2}[\nabla_{x,z}\varphi]$, which is done again by a fixed-point type argument.

Recall (\ref{fixedpoint}). To identify the   fixed point type structure inside $\Lambda_{\geq 2}[\nabla_{x,z}\varphi]$, it is sufficient to reformulate $\Lambda_{\geq 2}[g_i(z)], i \in \{1,2,3\}$.

 Recall (\ref{eqn782}) and (\ref{eqn783}). After using the decomposition (\ref{eqq51})  for $\nabla_{x,z}\varphi$ in  $g_{i}(z), i \in \{1,2,3\}$, we have the decomposition of  $\Lambda_{\geq 2}[g_i(z)], i \in \{1,2,3\}$, as follows,
\begin{equation}\label{eqn2001}
\Lambda_{\geq 2}[g_1(z)] = \tilde{\,h}_1 \Lambda_{\geq 2}[\p_z \varphi] - (z+1)^2  \tilde{\,h}_2 \Lambda_{\geq 2}[\p_z\varphi] + (z+1) \tilde{\,h}_3\cdot\Lambda_{\geq 2}[ \nabla \varphi]
\end{equation}
\begin{equation}\label{eqn2002}
+ \tilde{\,h}_1 \Lambda_{1}[\p_z \varphi] - (z+1)^2  \tilde{\,h}_2 \Lambda_{1}[\p_z\varphi] + (z+1)  \tilde{\,h}_3\cdot \Lambda_{1}[ \nabla \varphi],
\end{equation}
\begin{equation}\label{eqn2003}
\Lambda_{\geq 2}[g_2(z)]= (z+1)  \tilde{\,h}_2\Lambda_{\geq 2}[\p_z \varphi] -   \tilde{\,h}_3\cdot \Lambda_{\geq 2}[ \nabla \varphi]+  (z+1)   \tilde{\,h}_2\Lambda_{1}[\p_z \varphi] -  \tilde{\,h}_3\cdot \Lambda_{1}[\nabla \varphi],
\end{equation}
\begin{equation}\label{eqn2004}
\Lambda_{\geq 2}[g_3(z)]=(z+1)  \tilde{\,h}_3 \Lambda_{\geq 2}[\p_z \varphi] + (z+1)  \tilde{\,h}_3 \Lambda_{1}[\p_z \varphi].
\end{equation}
 
From (\ref{eqn2002}), (\ref{eqn2003}), and (\ref{eqn2004}), now it is easy to see that there exists the fixed point type structure for $\Lambda_{\geq 2}[\nabla_{x, z}\varphi]$ in $\Lambda_{\geq 2}[g_i(z)], i \in \{1,2,3\}$. From the standard fixed point type argument and the estimates in Lemma \ref{integraloperator}, we obtain basic estimates for $\Lambda_{\geq 2}[\nabla_{x, z}\varphi]$, which further gives us a more precise estimate for $ \nabla_{x, z}\varphi $ from (\ref{eqq51}).

More precisely, our main results in this subsection is summarized as follows,
\begin{lemma}\label{Sobolevestimate}
For $\gamma', k' \geq 1$,  $0< \delta \ll 1 $, $\alpha\in(0,1],$ if $\,h \in \widetilde{W^{\gamma'}}\cap H^{k'}$ satisfies the following smallness assumption:
\begin{equation}\label{smallness}
\| \,h\|_{\widetilde{W^{\gamma'}}} < \delta,
 \end{equation}
  then  the following $L^2-$ type estimate and $L^\infty-$ type estimate of the velocity potential ``$\varphi$'' hold,
\begin{equation}\label{eqn2200}
 \|\nabla_{x,z}\varphi\|_{L^\infty_z H^k}  \lesssim \| \nabla\psi\|_{H^k} + \| \,h\|_{H^{k+1}} \| \nabla\psi\|_{\widetilde{W^0}},\quad 
\end{equation}
\begin{equation}\label{eqn2201}
\| \nabla_{x}\varphi\|_{L^\infty_z \widetilde{W^\gamma}} \lesssim \| \nabla \psi \|_{\widetilde{W^{\gamma}}} , \quad \| \p_z \varphi\|_{L^\infty_z \widetilde{W^{\gamma}}}\lesssim\| \nabla \psi\|_{\widehat{W}^{\gamma,\alpha}}+  \| \,h\|_{\widetilde{W^{\gamma+1}}} \| \nabla \psi \|_{\widetilde{W^\gamma}},
\end{equation}
\begin{equation}\label{eqn2203}
\|\Lambda_{\geq 2}[\nabla_{x,z}\varphi]\|_{L^\infty_z\widetilde{W^{\gamma}}} \lesssim \| \nabla\psi\|_{\widetilde{W^\gamma}}\| \,h\|_{\widetilde{W^{\gamma+1}}},
\end{equation}
\begin{equation}\label{eqn2202}
\| \Lambda_{\geq 2}[\nabla_{x,z}\varphi]\|_{L^\infty_z H^k} \lesssim  \| \,h\|_{\widetilde{W^1}}\| \d\psi\|_{H^k} + \| \nabla \psi\|_{\widetilde{W^0}}\| \,h\|_{H^{k+1}},
\end{equation}
where $k\leq k'-1$ and $1\leq \gamma \leq \gamma'-1$. In above estimates, the range of $z$ for the $L^\infty_z$ norm is  $[-1,0].$
\end{lemma}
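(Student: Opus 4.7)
The plan is to exploit the fixed-point formulation (\ref{fixedpoint}) in which the ``nonlinearities'' $g_1,g_2,g_3$ defined in (\ref{eqn782})--(\ref{eqn783}) are actually linear in $\nabla_{x,z}\varphi$ with coefficients $\tilde{h}_1,\tilde{h}_2,\tilde{h}_3$ (see (\ref{tildeeta})) that are small in $\widetilde{W^1}$ under the hypothesis (\ref{smallness}). Combining this with the uniform boundedness of the integral operators $K_i(z,s)$ from Lemma \ref{integraloperator} sets up a contraction-mapping argument in the appropriate function spaces.

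First I would prove the $L^\infty$-type estimates (\ref{eqn2201}). The linear part $\Lambda_1[\nabla_{x,z}\varphi]$ is given explicitly by (\ref{eqn773}). For $\nabla_x\varphi$, the multiplier $\cosh((z{+}1)|\xi|)/\cosh(|\xi|)$ has uniformly bounded $\mathcal{S}^\infty_k$-norm on each dyadic ring by Lemma \ref{Snorm}, giving $\|\Lambda_1[\nabla_x\varphi]\|_{L^\infty_z\widetilde{W^\gamma}}\lesssim\|\nabla\psi\|_{\widetilde{W^\gamma}}$. For $\p_z\varphi$, the multiplier is $|\xi|\sinh((z{+}1)|\xi|)/\cosh(|\xi|)$, which is bounded for $|\xi|\gtrsim 1$ but vanishes like $(z{+}1)|\xi|^2$ as $|\xi|\to 0$; so $\|P_k\Lambda_1[\p_z\varphi]\|_{L^\infty}\lesssim\min(2^k,1)\|P_k\nabla\psi\|_{L^\infty}$, which is exactly what the $\widehat{W^{\gamma,\alpha}}$ norm with $\alpha\in(0,1]$ captures. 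Next, plug the splitting (\ref{eqq51}) into the decompositions (\ref{eqn2001})--(\ref{eqn2004}) of $\Lambda_{\geq 2}[g_i]$. Because each $\tilde{h}_i$ is small and is multiplied either by $\Lambda_1[\nabla_{x,z}\varphi]$ or by $\Lambda_{\geq 2}[\nabla_{x,z}\varphi]$, applying the $\widetilde{W^\gamma}$-bound in (\ref{eqn2301}) to the integrals against $K_i(z,s)$ in (\ref{fixedpoint}) yields a contraction whose fixed point satisfies (\ref{eqn2203}); combining (\ref{eqn2203}) with the linear estimate gives (\ref{eqn2201}).

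For the $L^2$-type estimate (\ref{eqn2200}) I would apply the same fixed-point scheme to the formulation (\ref{fixedpoint}), but now use the $H^k$ half of (\ref{eqn2300}). The products in $g_j$ are estimated by the standard high-low inequality $\|fg\|_{H^k}\lesssim\|f\|_{\widetilde{W^0}}\|g\|_{L^\infty_zH^k}+\|f\|_{H^{k+1}}\|g\|_{L^\infty_z\widetilde{W^0}}$: placing $\tilde{h}_i$ in $\widetilde{W^0}$ (small) and $\nabla_{x,z}\varphi$ in $L^\infty_zH^k$ produces the term that is absorbed by smallness, while placing $\tilde{h}_i$ in $H^{k+1}$ and $\nabla_{x,z}\varphi$ in $L^\infty_z\widetilde{W^0}$ (already controlled by (\ref{eqn2201})) yields the second term on the right side of (\ref{eqn2200}). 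For the refined estimate (\ref{eqn2202}), one runs the same argument on the equation for $\Lambda_{\geq 2}[\nabla_{x,z}\varphi]$ driven by (\ref{eqn2002})--(\ref{eqn2004}), so that at least one factor of $h$ (or $\nabla h$) is always present, producing the product structure in (\ref{eqn2202}).

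The main obstacle, in my view, is the asymmetry between $\p_z\varphi$ and $\nabla_x\varphi$ at low frequency. A naive fixed-point estimate on the full $\nabla_{x,z}\varphi$ would forfeit the additional $\alpha$-power derivative at low frequency that (\ref{eqn2201}) asks for $\p_z\varphi$: this gain comes solely from the $\sinh((z{+}1)|\xi|)$ smoothing in the explicit linear formula (\ref{eqn773}) and disappears as soon as one substitutes $\nabla_{x,z}\varphi$ back into $g_i$. The remedy is to isolate $\Lambda_1[\nabla_{x,z}\varphi]$ before iterating, estimate it by hand using Lemma \ref{Snorm}, and verify via (\ref{eqn2001})--(\ref{eqn2004}) that every term contributing to $\Lambda_{\geq 2}[\nabla_{x,z}\varphi]$ carries at least one factor of $\tilde{h}_i$; the $L^\infty$ norm of that factor is then bounded by $\|h\|_{\widetilde{W^{\gamma+1}}}$ (the extra derivative coming from the $\nabla h/(1+h)$ structure in $\tilde{h}_3$ and from the $|\nabla h|^2$ structure in $\tilde{h}_2$), producing the product $\|h\|_{\widetilde{W^{\gamma+1}}}\|\nabla\psi\|_{\widetilde{W^\gamma}}$ that appears in (\ref{eqn2203}) and (\ref{eqn2201}).
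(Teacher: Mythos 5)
Your proposal is correct and follows essentially the same route as the paper's proof: isolate the explicit linear part $\Lambda_1[\nabla_{x,z}\varphi]$ from (\ref{eqn773}), estimate it by a direct multiplier/kernel argument (noting the extra low-frequency smoothing in the $\p_z$ component that yields the $\widehat{W}^{\gamma,\alpha}$ gain), then run a contraction on $\Lambda_{\geq 2}[\nabla_{x,z}\varphi]$ via the fixed-point identity (\ref{fixedpoint}) together with the decompositions (\ref{eqn2001})--(\ref{eqn2004}) and Lemma \ref{integraloperator}, using the smallness hypothesis to absorb the self-referential term. The only minor imprecision is invoking smallness of $\tilde{h}_i$ in $\widetilde{W^0}$; since $\tilde h_2,\tilde h_3$ carry a $\nabla h$, one needs smallness in $\widetilde{W^1}$ (guaranteed by (\ref{smallness}) with $\gamma'\ge 1$), which you implicitly acknowledge when attributing the $\|h\|_{\widetilde{W^{\gamma+1}}}$ factor to the $\nabla h$ structure.
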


\begin{proof}
We first estimate $\Lambda_{\geq 2}[\nabla_{x,z}\varphi]$. Recall (\ref{fixedpoint}), (\ref{eqn2001}), (\ref{eqn2002}), (\ref{eqn2003}) and (\ref{eqn2004}). From estimate   (\ref{eqn2301}) in Lemma \ref{integraloperator},  the following estimate holds   
\[
\| \Lambda_{\geq 2}[\nabla_{x,z}\varphi]\|_{L^\infty_z \widetilde{W^\gamma}} \lesssim \|\Lambda_{\geq 2}[(g_1(z), g_2(z), g_3(z))] \|_{L^\infty_z \widetilde{W^\gamma}}\]
\[
\lesssim \| \,h\|_{\widetilde{W^{\gamma+1}}} \| \Lambda_{\geq 2}[\nabla_{x,z}\varphi]\|_{L^\infty_z \widetilde{W^\gamma}} + \|\,h\|_{\widetilde{W^{\gamma+1}}} \|\nabla\psi\|_{\widetilde{W^\gamma}}. 
\]
Hence, from the smallness condition (\ref{smallness}), we have
\begin{equation}\label{Linfty}
\| \Lambda_{\geq 2}[\nabla_{x,z}\varphi]\|_{L^\infty_z \widetilde{W^\gamma}} \lesssim \| \,h\|_{\widetilde{W^{\gamma+1}}} \| \nabla\psi\|_{\widetilde{W^\gamma}}.
\end{equation}
Very similarly, from estimate (\ref{eqn2300}) in Lemma \ref{integraloperator}, the following estimate holds,
\[
\| \Lambda_{\geq 2}[\nabla_{x,z}\varphi]\|_{L^\infty_z H^k} \lesssim \|\Lambda_{\geq 2}[(g_1(z), g_2(z), g_3(z))] \|_{L^\infty_z H^k}\lesssim \| \,h\|_{\widetilde{W^1}} \| \Lambda_{\geq 2}[\nabla_{x,z}\varphi]\|_{L^\infty_z H^k}\]
\[
 + \| \,h\|_{H^{k+1}} \| \Lambda_{\geq 2}[\nabla_{x,z}\varphi]\|_{L^\infty_z \widetilde{W^0}} + \| \,h\|_{H^{k+1}}\| \nabla\psi\|_{\widetilde{W^0}} + \| \nabla \psi\|_{H^{k}} \| \,h\|_{\widetilde{W^1}}
\]
\[
\lesssim \| \,h\|_{\widetilde{W^1}} \| \Lambda_{\geq 2}[\nabla_{x,z}\varphi]\|_{L^\infty_z H^k} 
+ \| \,h\|_{H^{k+1}}\|\nabla\psi\|_{\widetilde{W^0}}(1+ \| \,h\|_{\widetilde{W^1}}) + \| \nabla \psi\|_{H^{k}} \| \,h\|_{\widetilde{W^1}}.
\]
Again, from the smallness assumption (\ref{smallness}), we conclude
\begin{equation}\label{L2type}
\| \Lambda_{\geq 2}[\nabla_{x,z}\varphi]\|_{L^\infty_z H^k} \lesssim \| \,h\|_{H^{k+1}}\|\nabla\psi\|_{\widetilde{W^0}} + \| \nabla \psi\|_{H^{k}} \| \,h\|_{\widetilde{W^1}}.
\end{equation}
From estimates (\ref{Linfty}) and (\ref{L2type}) and the explicit formulas of $\Lambda_{1}[\nabla_{x,z}\varphi]$ in (\ref{eqn773}), we have
\[
\| \nabla_{x,z}\varphi\|_{L^\infty H^k} \lesssim \| \nabla\psi\|_{H^k} +  \| \,h\|_{H^{k+1}}\|\nabla\psi\|_{\widetilde{W^0}},\quad\|\nabla_{x}\varphi\|_{L^\infty_z \widetilde{W^\gamma}} \lesssim \|\nabla\psi\|_{\widetilde{W^\gamma}},
\]
\[
\| \p_z \varphi\|_{L^\infty_z \widetilde{W^\gamma}} \lesssim \| \Lambda^2\psi\|_{\widetilde{W^\gamma}} + \| \,h\|_{\widetilde{W^{\gamma+1}}} \| \nabla\psi\|_{\widetilde{W^\gamma}}\lesssim \|\nabla\psi\|_{\widehat{W}^{\gamma, \alpha}} + \| \,h\|_{\widetilde{W^{\gamma+1}}} \| \nabla\psi\|_{\widetilde{W^\gamma}}.
\]
Hence  finishing the proof.
\end{proof}

\subsection{ The quadratic terms  of the Dirichlet-Neumann operator }\label{secondorderexpansion} The content of this subsection is not related to the proof of our main theorem. However, it is crucial to the study of the long time behavior of the water waves system in the flat bottom.

 Generally speaking, the main enemies of the global existence of a $2D$ dispersive equation are the quadratic terms. The  first step is to know exactly what the enemies are. Surprisingly, as a byproduct of  the fixed point type formulation  (\ref{fixedpoint}),  we can calculate explicitly the quadratic terms of the Dirichlet-Neumann operator.

More precisely, the main result of this subsection is stated as follows,
\begin{lemma}\label{quadraticterms}
In terms of $h$ and $\psi$, the quadratic terms of   the Dirichlet-Neumann operator is given as follows,
\be\label{eqq345}
\Lambda_{2}[G(h)\psi] = -\nabla\cdot( h \nabla\psi) - \d \tanh\d( h \d \tanh\d \psi).
\ee
\end{lemma}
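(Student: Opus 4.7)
My plan is to extract $\Lambda_2[G(h)\psi]$ from the reduction (\ref{quadraticformula}) that has already been set up, namely
\[
\Lambda_2[G(h)\psi] = \Lambda_2[\partial_z\varphi|_{z=0}] - \Lambda_1[\partial_z\varphi|_{z=0}]\cdot h - \nabla h\cdot\nabla\psi.
\]
This isolates the task to computing $\Lambda_1$ and $\Lambda_2$ of $\partial_z\varphi|_{z=0}$.

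First, evaluating (\ref{eqn773}) at $z=0$ one reads off $\Lambda_1[\partial_z\varphi|_{z=0}] = |\nabla|\tanh|\nabla|\psi$, which is precisely the inner operator appearing inside the outer $|\nabla|\tanh|\nabla|$ in the claim. The same computation also yields the linear velocity potential $\Lambda_1[\varphi](z) = \frac{\cosh((z+1)|\nabla|)}{\cosh|\nabla|}\psi$, which will be needed below.

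Next, I specialize the representation formula for $\partial_z\varphi$ derived just before (\ref{eqn10}) to $z=0$. Substituting the explicit value of $\tilde h(-1)$ obtained from the Neumann boundary condition and simplifying with $\cosh^2-\sinh^2=1$ on the Fourier side, I obtain the clean identity
\[
\partial_z\varphi|_{z=0} = |\nabla|\tanh|\nabla|\psi + \int_{-1}^{0}\frac{\cosh((s+1)|\nabla|)}{\cosh|\nabla|}\,g(s)\,ds,
\]
so that $\Lambda_2[\partial_z\varphi|_{z=0}] = \int_{-1}^0 \frac{\cosh((s+1)|\nabla|)}{\cosh|\nabla|}\Lambda_2[g(s)]\,ds$. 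For $\Lambda_2[g(s)]$ I would return to the pre-decomposition form $g = (1-\tilde a)\partial_z^2\varphi - \tilde b\cdot\nabla\partial_z\varphi - \tilde c\,\partial_z\varphi$ used in (\ref{eqq40}). The linear parts of the coefficients are read off from (\ref{coeff})--(\ref{coeff1}) as $\Lambda_1[1-\tilde a]=2h$, $\Lambda_1[\tilde b]=-2(z+1)\nabla h$, and $\Lambda_1[\tilde c]=-(z+1)\Delta h$; plugging against $\Lambda_1[\varphi]$ renders $\Lambda_2[g(s)]$ a fully explicit bilinear expression in $(h,\psi)$ with hyperbolic Fourier multipliers in $s$.

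Finally, I would simplify the $s$-integral by repeated integration by parts, using $\partial_s\cosh((s+1)|\nabla|)=|\nabla|\sinh((s+1)|\nabla|)$ and its reverse, together with the boundary values $\sinh(0)=0$ at $s=-1$ and $\cosh((s+1)|\nabla|)/\cosh|\nabla|\big|_{s=0}=1$. This collapses the three resulting integrals into a sum of boundary contributions. Substituted back into (\ref{quadraticformula}), the spurious $\nabla h\cdot\nabla\psi$ term cancels against the one already present, and one more use of $\cosh^2-\sinh^2=1$ on the Fourier side telescopes the remaining boundary pieces into exactly $-\nabla\cdot(h\nabla\psi) - |\nabla|\tanh|\nabla|\bigl(h\,|\nabla|\tanh|\nabla|\psi\bigr)$. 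I expect the main obstacle to be the bookkeeping of this last step: three different $s$-integrals with products of $\sinh/\cosh$ multipliers must combine through $\cosh^2-\sinh^2=1$ to produce the symmetric answer, and keeping track of signs and of the boundary contributions at $s=0$ and $s=-1$ is where errors are easiest to make.
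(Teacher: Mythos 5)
Your plan is correct and produces the same answer, but via a meaningfully different computation. The paper does not work with the undecomposed $g(z)$; it recycles the decomposition $g = \partial_z g_1 + g_2 + \nabla\cdot g_3$ from (\ref{eqq20}) (built to keep everything linear in $\nabla_{x,z}\varphi$ for the fixed-point argument), evaluates (\ref{fixedpoint}) at $z=0$ to obtain (\ref{eqn40}), plugs in the explicit $\Lambda_2[g_i(s)]$, organizes the result into four bilinear forms $Q_{1,1},Q_{1,2},Q_{2,1},Q_{2,2}$, and exhibits the cancellation at the level of the rational symbols $q_{i,j}$. Your route goes back to the raw $g = (1-\tilde a)\partial_z^2\varphi - \tilde b\cdot\nabla\partial_z\varphi - \tilde c\,\partial_z\varphi$ together with the clean $z=0$ kernel $\cosh((s+1)|\nabla|)/\cosh|\nabla|$, which is legitimate here since no derivative needs to be saved when one only wants the quadratic symbol; it is slightly shorter and skips the $g_1,g_2,g_3$ bookkeeping. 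The step you flag as the "main obstacle" really is where the work lives: the three $s$-integrals do not individually collapse by integration by parts. They collapse only after noticing that, on the Fourier side, the coefficients of the two $(s+1)\sinh((s+1)|\eta|)$-pieces combine via
\[
2(\xi-\eta)\cdot\eta + |\xi-\eta|^2 = |\xi|^2 - |\eta|^2,
\]
at which point one may invoke (with $u=s+1$, $a=|\xi|$, $b=|\eta|$) the exact antiderivatives $\frac{d}{du}\big[a\sinh(au)\sinh(bu)-b\cosh(au)\cosh(bu)\big]=(a^2-b^2)\cosh(au)\sinh(bu)$ and $\frac{d}{du}\big[\cosh(au)\sinh(bu)\big]=a\sinh(au)\sinh(bu)+b\cosh(au)\cosh(bu)$; integrating by parts against the $u$-factor then leaves pure boundary terms, and the symbol of $\Lambda_2[\partial_z\varphi|_{z=0}]$ comes out as $|\eta|\tanh|\eta| - |\xi||\eta|\tanh|\xi|\tanh|\eta| + |\eta|^2$. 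Subtracting the $\Lambda_1[\partial_z\varphi|_{z=0}]\,h$ and $\nabla\psi\cdot\nabla h$ contributions from (\ref{quadraticformula}) then gives $\xi\cdot\eta - |\xi||\eta|\tanh|\xi|\tanh|\eta|$, which is (\ref{eqnq929}). (The $\cosh^2-\sinh^2$ identity you anticipate is not actually needed in the final collapse, only exponential algebra.) So the proposal is sound; its advantage is directness, while the paper's advantage is that once the $g_i$ formulas are in hand the $q_{i,j}$ computation is completely mechanical.
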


\begin{remark}
Before we proceed to   prove above Lemma, we compare the main difference between the flat bottom setting, which is less studied, and the infinite depth setting, which is well studied recently. In the infinite depth setting, the quadratic terms of the Dirichlet-Neumann operator is given as follows,
\be\label{eqq259}
\textup{(Infinite depth setting)}\qquad \qquad  \Lambda_{2}[G(h)\psi] = -\nabla\cdot( h \nabla\psi) - \d  ( h \d   \psi).
\ee

If  the frequency  $ \eta$   of ``$\psi$'' is of size $1$ and the frequency  $ \xi-\eta$ of ``$h$''   is of size $0$, from (\ref{eqq345}) and (\ref{eqq259}),   
it is easy to check the size of the symbol of quadratic terms as follows,
\[
\textup{symbol of quadratic term in the flat bottom:} \quad \xi\cdot \eta - |\xi||\eta| \tanh|\xi| \tanh|\eta| = \frac{4|\xi|^2}{(e^{|\xi|}+e^{-|\xi|})^2}\sim 1,
\]
\[
\textup{symbol of quadratic term in the infinite depth setting:}\qquad  -|\xi||\eta| + \xi \cdot \eta =0.
\]
 That is to say, unlike the infinite depth setting, we don't have null structure at the low frequency part in  the flat bottom setting. As a result,  we expect much stronger nonlinear effect from the quadratic terms, which makes the global regularity problem in the flat bottom setting more delicate and more difficult than the infinite depth setting.

 \end{remark}

\noindent \textit{Proof of Lemma} \ref{quadraticterms}.\qquad Recall (\ref{DN1}) and (\ref{eqn773}). We have
\be\label{eqqn1}
\Lambda_{2}[G(h)\psi]= \Lambda_{2}[\p_z\varphi\big|_{z=0}] - h \d \tanh\d \psi - \nabla h \cdot \nabla \psi.
\ee
Hence,  the problem is reduced to calculate explicitly the quadratic terms of $\p_z\varphi\big|_{z=0}$. Recall    (\ref{fixedpoint}) , we have 
\[
\Lambda_{2}[\p_z \varphi \big|_{z=0}] = -\frac{1}{e^{\d}+e^{-\d}} \int_{-1}^0 [e^{(s-1)\d}-e^{-(s+1)\d}] [\Lambda_{2}[g_2+\nabla\cdot g_3]] ds 
\]
\[
+\frac{1}{e^{\d}+e^{-\d}} \int_{-1}^0 [e^{(s-1)\d}+e^{-(s+1)\d}] \d [\Lambda_{2}[g_1]] ds + \int_{-1}^{0}e^{s\d} \Lambda_{2}[g_2+\nabla\cdot g_3-\d g_1] d s + \Lambda_{2}[g_1(0)]
\]
\begin{equation}\label{eqn40}
= \int_{-1}^0 \frac{e^{(s+1)\d} + e^{-(s+1)\d}}{e^{\d}+ e^{-\d}}\Lambda_{2}[g_2+\nabla\cdot g_3] d s - \int_{-1}^{0} \frac{e^{(s+1)\d} -e^{-(s+1)\d}}{e^{\d}+e^{-\d}} \d\Lambda_{2}[g_1(s)] d s   + \Lambda_{2}[g_1(0)].
\end{equation}
 From  (\ref{eqn12}),  (\ref{eqn14}), and (\ref{eqn773}), it is easy to derive the following equalities, 
\[
\Lambda_{2}[g_1(s)]= 2\,h \frac{e^{(s+1)\d} -e^{-(s+1)\d}}{e^{\d}+e^{-\d}}\d\psi + (s+1)\nabla \,h \cdot \nabla \frac{e^{(s+1)\d} + e^{-(s+1)\d}}{e^{\d}+e^{-\d}}\psi,
\]
\begin{equation}\label{eqn2008}
\Lambda_{2}[g_1(0)] = 2  h \d\tanh\d  \psi + \nabla \,h\cdot \nabla \psi,
\end{equation}
\[
\Lambda_{2}[g_2(s)]=  - \nabla \,h\cdot \nabla \frac{e^{(s+1)\d} + e^{-(s+1)\d}}{e^{\d}+e^{-\d}}\psi,\quad \Lambda_{2}[g_3(s)]=(s+1)\nabla \,h \frac{e^{(s+1)\d} -e^{-(s+1)\d}}{e^{\d}+e^{-\d}}\d\psi.
\]
After plugging above explicit formulas of $\Lambda_2[g_i(z)]$, $i\in\{1,2,3\}$, the goal is reduced to calculate explicitly the symbols of two integrals in (\ref{eqn40}).  Define
\be\label{eqnn2}
Q_1(\,h,\psi):=
\int_{-1}^{0} \frac{e^{(s+1)\d} + e^{-(s+1)\d}}{e^{\d}+ e^{-\d}}\Lambda_{2}[g_2+\nabla\cdot g_3] d s = Q_{1,1}(\,h,\psi)+ Q_{1,2}(\,h, \psi), 
\ee
\be\label{eqnn6}
Q_2(\,h, \psi) = - \int_{-1}^{0} \frac{e^{(s+1)\d} -e^{-(s+1)\d}}{e^{\d}+e^{-\d}} \d\Lambda_{2}[g_1] d s= Q_{2,1}(\,h, \psi) + Q_{2,2}(\,h,\psi),
\ee
where
\[
Q_{1,1}(\,h, \psi) = \int_{-1}^{0} \frac{e^{(s+1)\d} + e^{-(s+1)\d}}{e^{\d}+ e^{-\d}}\Big[
\nabla \cdot[(s+1)\nabla \,h \frac{e^{(s+1)\d} -e^{-(s+1)\d}}{e^{\d}+e^{-\d}}\d\psi \Big] d s,
\]
\[
Q_{1,2}(\,h, \psi) = \int_{-1}^{0} \frac{e^{(s+1)\d} + e^{-(s+1)\d}}{e^{\d}+ e^{-\d}}\Big[
- \nabla \,h\cdot \nabla \frac{e^{(s+1)\d} + e^{-(s+1)\d}}{e^{\d}+e^{-\d}}\psi \Big] d s,
\]
\[
Q_{2,1}(\,h,\psi)= - \int_{-1}^{0} \frac{e^{(s+1)\d} -e^{-(s+1)\d}}{e^{\d}+e^{-\d}} \d\Big[2\,h \frac{e^{(s+1)\d} -e^{-(s+1)\d}}{e^{\d}+e^{-\d}}\d\psi\Big] d  s,
\]
\[
Q_{2,2}(\,h, \psi)= - \int_{-1}^{0} \frac{e^{(s+1)\d} -e^{-(s+1)\d}}{e^{\d}+e^{-\d}} \d\Big[(s+1)\nabla \,h \cdot \nabla \frac{e^{(s+1)\d} + e^{-(s+1)\d}}{e^{\d}+e^{-\d}}\psi\Big] d  s.
\]
The   symbol $q_{i,j}(\xi-\eta, \eta)$ of bilinear operator $Q_{i,j}(h, \psi)$, $i,j\in\{1,2\},$ is given as follows, 
\[
q_{1,1}(\xi-\eta,\eta)= \frac{-\xi\cdot (\xi-\eta)|\eta|}{(e^{|\xi|}+e^{-|\xi|})(e^{|\eta|}+e^{-|\eta|})} \int_{-1}^{0} (s+1)[e^{(s+1)|\xi|} + e^{-(s+1)|\xi|}][e^{(s+1)|\eta|} - e^{-(s+1)|\eta|}]  d s
\] 
\[
= \frac{-\xi\cdot (\xi-\eta)|\eta|}{(e^{|\xi|}+e^{-|\xi|})(e^{|\eta|}+e^{-|\eta|})}\Big[\frac{(|\xi|+|\eta|-1)e^{|\xi|+|\eta|}-(-|\xi|-|\eta|-1)e^{-|\xi|-|\eta|}}{(|\xi|+|\eta|)^2}
\]
\begin{equation}\label{eqn42}
+ \frac{(|\eta|-|\xi|-1)e^{|\eta|-|\xi|}-(|\xi|-|\eta|-1)e^{|\xi|-|\eta|}}{(|\xi|-|\eta|)^2}\Big],
\end{equation}

\[
q_{1,2}(\xi-\eta,\eta) = \frac{(\xi-\eta)\cdot \eta}{(e^{|\xi|}+e^{-|\xi|})(e^{|\eta|}+e^{-|\eta|})} \int_{-1}^{0}[e^{(s+1)|\xi|} + e^{-(s+1)|\xi|}][e^{(s+1)|\eta|} +e^{-(s+1)|\eta|}] d s  
\]
\be\label{eqnn3}
= \frac{(\xi-\eta)\cdot \eta}{(e^{|\xi|}+e^{-|\xi|})(e^{|\eta|}+e^{-|\eta|})} \Big[ \frac{e^{|\xi|+|\eta|}-e^{-|\xi|-|\eta|}}{|\xi|+|\eta|} + \frac{e^{|\xi|-|\eta|} - e^{|\eta|-|\xi|}}{|\xi|-|\eta|} \Big].
\ee
\[
q_{2,1}(\xi-\eta, \eta) = \frac{-2|\xi||\eta|}{(e^{|\xi|}+e^{-|\xi|})(e^{|\eta|}+e^{-|\eta|})}\int_{-1}^0 [e^{(s+1)|\xi|} - e^{-(s+1)|\xi|}][e^{(s+1)|\eta|} -e^{-(s+1)|\eta|}]  d s
\]
\begin{equation}\label{eqn80}
= \frac{-2|\xi||\eta|}{(e^{|\xi|}+e^{-|\xi|})(e^{|\eta|}+e^{-|\eta|})}\Big[  \frac{e^{|\xi|+|\eta|}-e^{-|\xi|-|\eta|}}{|\xi|+|\eta|} -  \frac{e^{|\xi|-|\eta|} - e^{|\eta|-|\xi|}}{|\xi|-|\eta|} 
\Big],
\end{equation}
\[
q_{2,2}(\xi-\eta,\eta)=  \frac{|\xi|(\xi-\eta)\cdot \eta}{(e^{|\xi|}+e^{-|\xi|})(e^{|\eta|}+e^{-|\eta|})} \int_{-1}^0 (s+1)[e^{(s+1)|\xi|}-e^{-(s+1)|\xi|}][e^{(s+1)|\eta|} + e^{-(s+1)|\eta|}]  d s
\]
\[
=  \frac{|\xi|(\xi-\eta)\cdot \eta}{(e^{|\xi|}+e^{-|\xi|})(e^{|\eta|}+e^{-|\eta|})}\Big[\frac{(|\xi|+|\eta|-1)e^{|\xi|+|\eta|}-(-|\xi|-|\eta|-1)e^{-|\xi|-|\eta|}}{(|\xi|+|\eta|)^2}
\]
\begin{equation}\label{eqn81}
- \frac{(|\eta|-|\xi|-1)e^{|\eta|-|\xi|}-(|\xi|-|\eta|-1)e^{|\xi|-|\eta|}}{(|\xi|-|\eta|)^2}\Big].
\end{equation}
 In above computations, we used the  following simple fact,
\[
\int_{-1}^{0} (s+1) e^{(s+1) a} d s = \frac{1+(a-1)e^{a}}{a^2}.
\]
From  (\ref{eqqn1}), (\ref{eqn40}), (\ref{eqn2008}), (\ref{eqnn2}) and (\ref{eqnn6}), we have
\[
\Lambda_2[G(\,h)\psi]:=\widetilde{Q}( h, \psi)= Q_1(h,\psi) + Q_2(h,\psi) + h \d \tanh\d \psi.
\]
Therefore, the symbol $\tilde{q}(\xi-\eta, \eta)$ of $\widetilde{Q}( h, \psi)$ is given as follows, 
\[
\tilde{q}(\xi-\eta, \eta)=\sum_{i,j=1,2} q_{i,j}(\xi-\eta, \eta) +\frac{e^{|\eta|} - e^{-|\eta|}}{e^{|\eta|}+e^{-|\eta|}}|\eta|. 
\]
Although above formulas look complicated, actually there are cancellations inside. Note that,
\[
 q_{1,2}(\xi-\eta, \eta)  +  q_{2,1 }(\xi-\eta, \eta) +\frac{e^{|\eta|} - e^{-|\eta|}}{e^{|\eta|}+e^{-|\eta|}}|\eta| 
\]
\[
= \frac{\xi\cdot \eta}{(e^{|\xi|}+e^{-|\xi|})(e^{|\eta|}+e^{-|\eta|})} \Big[ \frac{e^{|\xi|+|\eta|}-e^{-|\xi|-|\eta|}}{|\xi|+|\eta|}+ \frac{e^{|\xi|-|\eta|} - e^{|\eta|-|\xi|}}{|\xi|-|\eta|} \Big]  
\]
\begin{equation}\label{eqn82}
- \frac{|\xi||\eta|}{(e^{|\xi|}+e^{-|\xi|})(e^{|\eta|}+e^{-|\eta|})}\Big[ \frac{[e^{|\xi|+|\eta|}-e^{-|\xi|-|\eta|}]}{|\xi|+|\eta|}- \frac{e^{|\xi|-|\eta|}-e^{|\eta|-|\xi|}}{|\xi|-|\eta|}\Big],
\ee
\[
 q_{1,1}(\xi-\eta, \eta)  +  q_{2,2 }(\xi-\eta, \eta) 
\] 
\[
=\frac{ (-|\xi||\eta|+\xi\cdot\eta)}{(e^{|\xi|}+e^{-|\xi|})(e^{|\eta|}+e^{-|\eta|})} \frac{(|\xi|+|\eta|-1)e^{|\xi|+|\eta|}-(-|\xi|-|\eta|-1)e^{-|\xi|-|\eta|}}{(|\xi|+|\eta|) }
\]
\be\label{eqnn989}
- \frac{ ( \xi||\eta|+\xi\cdot\eta)}{(e^{|\xi|}+e^{-|\xi|})(e^{|\eta|}+e^{-|\eta|})} \frac{(|\eta|-|\xi|-1)e^{|\eta|-|\xi|}-(|\xi|-|\eta|-1)e^{|\xi|-|\eta|}}{(|\xi|-|\eta|) }\Big].
\ee
From (\ref{eqn82}) and (\ref{eqnn989}),  now  it is easy to verify the following equality,
 \be\label{eqnq929}
\tilde{q}(\xi-\eta, \eta)= \xi\cdot \eta - |\xi||\eta| \tanh|\xi|\tanh|\eta|.
 \ee
 Hence our desired equality (\ref{eqq345}) holds.

\qed

\begin{lemma}\label{symbol}
For $k_1,k_2,k\in \mathbb{Z}$,  the following estimate holds for the symbol of the quadratic terms of the Dirichlet-Neumann operator,
\begin{equation}\label{eqn1202}
\| \widetilde{q}(\xi-\eta, \eta)\|_{\mathcal{S}^\infty_{k,k_1,k_2}} \lesssim 2^{k+k_2}.
\end{equation}
\end{lemma}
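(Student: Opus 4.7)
The plan is to use the explicit formula
\[
\tilde{q}(\xi-\eta,\eta)=\xi\cdot\eta-(|\xi|\tanh|\xi|)(|\eta|\tanh|\eta|)
\]
just derived in \textup{(\ref{eqnq929})} and bound its two summands separately. For each summand I will produce a factorization of the localized symbol into a product of three pieces, each depending on only one of the three natural variables $\xi$, $\xi-\eta$, and $\eta$; the conclusion then follows from sub-multiplicativity \textup{(\ref{productofsymbol})} together with elementary bump-function scaling.

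For the first summand, writing $\xi\cdot\eta=\sum_{j=1}^{2}\xi_j\eta_j$ yields the factorization
\[
\xi_j\eta_j\,\psi_k(\xi)\psi_{k_1}(\xi-\eta)\psi_{k_2}(\eta)=\bigl[\xi_j\psi_k(\xi)\bigr]\cdot\bigl[\psi_{k_1}(\xi-\eta)\bigr]\cdot\bigl[\eta_j\psi_{k_2}(\eta)\bigr].
\]
Each factor is a one-dimensional symbol in $\xi$, in $\xi-\eta$, or in $\eta$; when viewed as a two-variable symbol in $(\xi-\eta,\eta)$, its inverse Fourier transform is a delta measure on the appropriate affine hyperplane weighted by the one-dimensional inverse Fourier transform of the underlying bump, so its $\mathcal{S}^\infty$-norm equals the $L^{1}$-norm of that one-dimensional inverse transform. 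Standard dyadic scaling then gives $\lesssim 2^{k}$ for $\xi_j\psi_k(\xi)$, $\lesssim 2^{k_2}$ for $\eta_j\psi_{k_2}(\eta)$, and $\lesssim 1$ for $\psi_{k_1}(\xi-\eta)$, producing the bound $2^{k+k_2}$ for this summand.

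For the second summand the factorization is already displayed:
\[
(|\xi|\tanh|\xi|)(|\eta|\tanh|\eta|)\,\psi_k(\xi)\psi_{k_1}(\xi-\eta)\psi_{k_2}(\eta)=\bigl[|\xi|\tanh|\xi|\,\psi_k(\xi)\bigr]\cdot\bigl[\psi_{k_1}(\xi-\eta)\bigr]\cdot\bigl[|\eta|\tanh|\eta|\,\psi_{k_2}(\eta)\bigr].
\]
Writing $|\xi|\tanh|\xi|=|\xi|^{2}\,g(|\xi|^{2})$ with $g(s)=\tanh\sqrt{s}/\sqrt{s}$ smooth and bounded on $[0,\infty)$ shows that $|\xi|\tanh|\xi|\,\psi_k(\xi)$ is a smooth, compactly supported bump at dyadic scale $2^k$. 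A direct Bernstein-type estimate then yields
\[
\|\mathcal{F}^{-1}[|\xi|\tanh|\xi|\,\psi_k(\xi)]\|_{L^1}\lesssim 2^{k}\min(1,2^{k})\leq 2^{k},
\]
the factor $\min(1,2^{k})$ reflecting the low-frequency vanishing $|\xi|\tanh|\xi|\sim|\xi|^{2}$ near the origin. The analogous bound $\lesssim 2^{k_2}$ holds for the third factor, and the middle factor is $\lesssim 1$, so sub-multiplicativity again bounds this summand by $2^{k+k_2}$.

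Adding the two contributions yields \textup{(\ref{eqn1202})}. The only subtle point is the identification of the $\mathcal{S}^\infty$-norm of a single-variable symbol (viewed as a two-variable one through $\xi=(\xi-\eta)+\eta$) with the $L^{1}$-norm of its one-dimensional inverse Fourier transform, once the ambient $\mathcal{S}^\infty$-norm is interpreted in the natural total-variation sense; this is standard and is fully compatible with \textup{(\ref{productofsymbol})}. A naive attempt to apply Lemma \ref{Snorm} directly to $\tilde{q}\,\psi_k(\xi_1+\xi_2)$ as a function of $(\xi_1,\xi_2)$ is the main trap to avoid, since it loses the sharp scaling in the "high$\,\times\,$high$\,\to\,$low" frequency regime where $\max(k_1,k_2)>k$; the factorization approach above circumvents this entirely.
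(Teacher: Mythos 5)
Your proof is correct and essentially fills in the details behind the paper's terse one-line proof (which simply invokes Lemma~\ref{Snorm}). You are also right to flag the subtle issue: applying Lemma~\ref{Snorm} directly with $i=2$ and $f(\xi_1,\xi_2)=\tilde q(\xi_1,\xi_2)\psi_k(\xi_1+\xi_2)$ produces the term $2^{k_1}\|\p_{\xi_1}f\|_{L^\infty}\sim 2^{k_1+k_2}$, which in the regime $k_1\sim k_2\gg k$ exceeds the claimed $2^{k+k_2}$, so the lemma cannot be invoked blindly on the full two-variable symbol. The factorization into bumps that each depend on exactly one of $\xi$, $\xi-\eta$, $\eta$ is the correct way to read the paper's intent, and each one-variable factor is then handled by Lemma~\ref{Snorm} with $i=1$ or by direct dyadic scaling.

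One small refinement worth noting: the ``total variation'' detour you discuss is avoidable. For bumps $a,b,c$ supported at dyadic scales, the product symbol $m(\xi_1,\xi_2)=a(\xi_1+\xi_2)\,b(\xi_1)\,c(\xi_2)$ is itself a smooth, compactly supported function on $\mathbb R^4$, and one checks directly that
\[
\mathcal{F}^{-1}[m](x_1,x_2)=c_0\int_{\mathbb R^2}\check a(y)\,\check b(x_1-y)\,\check c(x_2-y)\,dy,
\]
so $\|m\|_{\mathcal{S}^\infty}\le |c_0|\,\|\check a\|_{L^1}\|\check b\|_{L^1}\|\check c\|_{L^1}$ by Minkowski. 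This gives the same conclusion as your sub-multiplicativity argument without ever needing to assign an $\mathcal{S}^\infty$-norm to a single-variable factor viewed as a degenerate two-variable symbol. With that cosmetic change, the argument is clean; the substance and the conclusion are right as you wrote them.
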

\begin{proof}
From  (\ref{eqnq929}) and the estimate in    Lemma \ref{Snorm}, it is straightforward to derive above estimate. 
\end{proof}

\subsection{A fixed point type formulation for $\Lambda_{\geq 3}[\nabla_{x,z}\varphi]$}\label{fixedpointcubic} 
Same as in the previous subsection, the content of this subsection is not related to the proof of the main theorem but related to the future study of the long time behavior of the water waves system in different settings.

Although, intuitively speaking, the quadratic terms are the leading terms for the dispersive equation (\ref{dispersive}) in $2D$ case, one also has to control the cubic and higher order remainder terms to see that their effects are indeed small over time.  In this subsection, our goal is to formulate $\Lambda_{\geq 3}[\nabla_{x,z}\varphi]$ into a fixed point type formulation, which provides a good way to estimate the cubic and higher order remainder terms.

 Recall the fixed point type formulation of $\nabla_{x,z}\varphi$ in (\ref{fixedpoint}), we truncate it at the cubic and higher level and have the following, 
\[
\Lambda_{\geq 3}[\nabla_{x,z}\varphi]=[\mathbf{0}, \Lambda_{\geq 3}[g_1(z)]] +
\]
\[
+\int_{-1}^{0} [K_1(z,s)-K_2(z,s)-K_3(z,s)](\Lambda_{\geq 3}[g_2(s)]+\nabla \cdot \Lambda_{\geq 3}[g_3(s)])  ds \]
\begin{equation}\label{eqn68}
+\int_{-1}^{0} K_3(z,s)\d\textup{sign($z-s$)}\Lambda_{\geq 3}[g_1(s)]  -\d [K_1(z,s) +K_2(z,s)]\Lambda_{\geq 3}[g_1(s)]\, d  s.
\end{equation}
Recall (\ref{eqn782})  and (\ref{eqn783}).  Similar to the decomposition we did in (\ref{eqn2001}), (\ref{eqn2002}), (\ref{eqn2003}) and (\ref{eqn2004}), we can separate  $\Lambda_{\geq 3}[g_i(z)]$, $i\in\{1,2,3\}$, into two parts: (i) one of them contains $\Lambda_{\geq 3}[\nabla_{x,z}\varphi]$, in which lies the fixed point structure; (ii) the other part does not depend on $\Lambda_{\geq 3}[\nabla_{x,z}\varphi]$, which can be estimated directly. 
 
 More precisely, we  decompose $\Lambda_{\geq 3}[g_i(z)], i\in\{1,2,3\},$ as follows,
\begin{equation}\label{eqn50}
\Lambda_{\geq 3}[g_1(z)] = \tilde{\,h}_1 \Lambda_{\geq 3}[\p_z \varphi] -  (z+1)^2  \tilde{\,h}_2 \Lambda_{
\geq 3}[\p_z\varphi] + (z+1) \tilde{\,h}_3\cdot \Lambda_{\geq 3}[\nabla \varphi]
\end{equation}
\begin{equation}\label{eqn2000}
+\sum_{i=1,2} \Lambda_{\geq 3-i}[\tilde{\,h}_1] \Lambda_{  i}[\p_z \varphi]  -  (z+1)^2  \Lambda_{\geq 3-i}[\tilde{\,h}_2 ] \Lambda_{i}[\p_z\varphi]  + (z+1) \Lambda_{\geq 3-i}[\tilde{\,h}_3]\cdot \Lambda_{i}[\nabla \varphi] ,
\end{equation}
\[
\Lambda_{\geq 3}[g_2(z)] = (z+1)  \tilde{\,h}_2  \Lambda_{\geq 3}[\p_z \varphi] - \tilde{\,h}_3\cdot \Lambda_{\geq 3}[\nabla \varphi]
\]
 \begin{equation}\label{eqn790}
+\sum_{i=1,2}(z+1)\Lambda_{\geq 3-i}[ \tilde{\,h}_2] \Lambda_{i}[\p_z \varphi] -   \Lambda_{\geq 3-i}[\tilde{\,h}_3]\cdot \Lambda_{i }[\nabla \varphi] ,
\end{equation}
\begin{equation}\label{eqn53}
\Lambda_{\geq 3}[g_3(z)] =  (z+1)  \tilde{\,h}_3 \Lambda_{\geq 3}[\p_z \varphi]  + \sum_{i=1,2}( z+1)  \Lambda_{\geq 3-i}[\tilde{\,h}_3] \Lambda_{ i}[\p_z \varphi]  .
\end{equation}
From (\ref{tildeeta}), it is easy to verify that the following equalities hold,
\[
\Lambda_{\geq 2}[\tilde{h}_1]= h^2 -(2h+h^2) \tilde{h}_1, \quad \Lambda_{\geq 2}[\tilde{h}_2]= \tilde{h}_2,\quad \Lambda_{\geq 2}[\tilde{h}_3]= -h \tilde{h}_3.
\]

We can summarize above decomposition as the following Lemma, 
\begin{lemma}
We have
\[
\Lambda_{\geq 3}[\nabla_{x,z}\varphi(z)] = \sum_{i=1,2,3} C_z^i (\,h, \psi, \tilde{\,h}_i) + \,h \tilde{C}_z^i(\,h, \psi, \tilde{\,h}_i)+  T^i_z(\tilde{\,h}_i, \Lambda_{\geq 3}[\nabla_{x,z}\varphi]),\]
 where $C^i_z$ and $\tilde{C}_z^i$  are some trilinear operators and $ T^i_z$ is some bilinear operator. Assume that the corresponding symbols are   $c_z^i(\cdot, \cdot, \cdot)$, $\tilde{c}_z^i(\cdot, \cdot, \cdot)$, and $t_z^i(\cdot, \cdot)$ respectively,  then   the following estimate holds,
 \begin{equation}\label{eqn2304}
 \sup_{z\in[-1,0]}\sum_{i=1,2,3} \|c_{z}^i(\xi_1, \xi_2, \xi_3)\|_{\mathcal{S}^\infty_{k,k_1,k_2,k_3}} + \|\tilde{c}_z(\xi_1, \xi_2,\xi_3)\|_{\mathcal{S}^\infty_{k,k_1,k_2,k_3}}  \lesssim  2^{3\max\{k_1,k_2,k_3\}_{+}},
\end{equation}
\begin{equation}\label{eqn2306}
\sup_{z\in[-1,0]} \sum_{i=1,2,3} \|t^{i}_z(\xi_1,\xi_2)\|_{\mathcal{S}^\infty_{k,k_1,k_2}}  \lesssim 2^{3\max\{k_1,k_2\}_{+}}.
\end{equation}

\end{lemma}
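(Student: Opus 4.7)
The plan is to read the claimed decomposition directly from the fixed point formulation (\ref{eqn68}), in which $\Lambda_{\geq 3}[\nabla_{x,z}\varphi]$ is already expressed as the integral operators $K_j(z,s)$ (possibly precomposed with $\d$ or $\nabla$) acting on $\Lambda_{\geq 3}[g_i(s)]$, $i\in\{1,2,3\}$. Substituting the expansions (\ref{eqn50})--(\ref{eqn53}) splits each $\Lambda_{\geq 3}[g_i]$ into a fixed-point part involving $\tilde{h}_j \cdot \Lambda_{\geq 3}[\nabla_{x,z}\varphi]$ and a remainder of schematic form $\Lambda_{\geq 3-i}[\tilde{h}_j]\cdot \Lambda_i[\nabla_{x,z}\varphi]$ with $i\in\{1,2\}$. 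The fixed-point parts, after being fed through the appropriate $K_j(z,s)$ (or $\d K_j(z,s)$), produce exactly the bilinear operators $T^i_z(\tilde{h}_i,\Lambda_{\geq 3}[\nabla_{x,z}\varphi])$ in the statement.

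For the remaining trilinear pieces I will use the explicit identities $\Lambda_1[\tilde{h}_1]=2h$, $\Lambda_1[\tilde{h}_2]=0$, $\Lambda_1[\tilde{h}_3]=\nabla h$, together with $\Lambda_{\geq 2}[\tilde{h}_1]=h^2-(2h+h^2)\tilde{h}_1$, $\Lambda_{\geq 2}[\tilde{h}_2]=\tilde{h}_2$, $\Lambda_{\geq 2}[\tilde{h}_3]=-h\tilde{h}_3$ already recorded in the excerpt, combined with the explicit form (\ref{eqn773}) of $\Lambda_1[\nabla_{x,z}\varphi]$ and the formula for $\Lambda_2[\nabla_{x,z}\varphi]$ obtained by one further iteration of (\ref{fixedpoint}) using the quadratic symbol calculated in Section \ref{secondorderexpansion}. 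After these substitutions every remainder piece is manifestly trilinear in $(h,\psi,\tilde{h}_j)$; summands that carry an explicit isolated factor of $h$ (coming from $\Lambda_{\geq 2}[\tilde{h}_1]$, $\Lambda_{\geq 2}[\tilde{h}_3]$, or $\Lambda_1[\tilde{h}_1]$) are absorbed into $h\tilde{C}^i_z(h,\psi,\tilde{h}_i)$, and the others into $C^i_z(h,\psi,\tilde{h}_i)$.

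The symbol estimates (\ref{eqn2304}) and (\ref{eqn2306}) then follow from Lemma \ref{Snorm} applied to the explicit kernels (\ref{equation300})--(\ref{equation302}), exactly as in the proof of Lemma \ref{integraloperator}. The multipliers in $K_j(z,s)$ are uniformly bounded in $\mathcal{S}^\infty$ for $(z,s)\in[-1,0]^2$; all frequency growth is concentrated in the derivatives visible in $g_1,g_2,g_3$, namely at most one factor of $\nabla$ from $\nabla\cdot g_3$, one factor of $\d$ from $\d g_1$, and one further derivative hidden inside a $\tilde{h}_j$ input, which combine to give the loss $2^{3\max_j\{k_j\}_+}$ asserted. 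The main obstacle is not analytic but combinatorial: one has to sort every term produced by the substitution unambiguously into one of the three families $C^i_z$, $\tilde{C}^i_z$, $T^i_z$ without double counting, in particular tracking that the explicit $h$-factor coming from $\Lambda_1[\tilde{h}_1]=2h$ and the one hidden inside $\Lambda_{\geq 2}[\tilde{h}_3]=-h\tilde{h}_3$ are routed into $h\tilde{C}^i_z$ rather than $C^i_z$; once this bookkeeping is settled, the symbol bounds are a routine application of Lemma \ref{Snorm}.
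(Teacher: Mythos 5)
Your proposal follows the same route as the paper: substitute the decomposition (\ref{eqn50})--(\ref{eqn53}) of $\Lambda_{\geq 3}[g_i]$ into the fixed-point formula (\ref{eqn68}), route the pieces carrying $\tilde{h}_j\Lambda_{\geq 3}[\nabla_{x,z}\varphi]$ into $T^j_z$, and read the symbol bounds off Lemma \ref{Snorm}. The paper's own proof is a single remark ("check the symbol of each term, at most three derivatives in total"), so your extra bookkeeping -- noting the explicit expansions of $\Lambda_1[\tilde{h}_j]$ and $\Lambda_{\geq 2}[\tilde{h}_j]$ and the need for the quadratic expansion of $\nabla_{x,z}\varphi$ from Section \ref{secondorderexpansion} -- fills in what the paper leaves implicit. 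One small point: your accounting of where the three derivatives come from is slightly off. The factor of $\nabla$ acting on $g_3$ and the factor of $\d$ acting on $g_1$ in (\ref{eqn68}) occur in \emph{separate} summands, so they do not stack; what does stack is one outer derivative from $\nabla\cdot$ or $\d$, plus the (up to two) derivatives sitting in the bilinear symbol of $\Lambda_2[\nabla_{x,z}\varphi]$ in the $i=2$ pieces of (\ref{eqn2000})--(\ref{eqn53}). Also, in those $i=2$ pieces the factor $\Lambda_{\geq 1}[\tilde{h}_j]$ equals $\tilde{h}_j$ itself and enters as a single Fourier block, so it contributes no symbolic derivative -- the derivatives "inside" $\tilde{h}_j$ never appear. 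Neither imprecision changes the conclusion, since $2^{3\max\{k_j\}_+}$ is only an upper bound, but the tally as you wrote it would not by itself justify the constant $3$.
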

\begin{proof}
The proof is   straightforward. From   Lemma \ref{Snorm}, our desired estimates (\ref{eqn2304}) and (\ref{eqn2306}) can be derived by checking symbol of  each term inside the equations (\ref{eqn68}), (\ref{eqn2000}), (\ref{eqn790}) and (\ref{eqn53}). Note that there are at most three derivatives in total.
\end{proof}

\section{Paralinearization and symmetrization of the system}\label{parasymm}

Since the gravity waves system (\ref{waterwave}) is quasilinear and lacks symmetric structures inside, we cannot use this system directly to do energy estimate because of the difficulty of losing one derivative.

To identity the hidden symmetries inside the gravity waves system (\ref{waterwave}) and get around this losing derivative issue, we use the method of  paralinearization and symmmetrization, which was   introduced and studied in a series of works by Alazard-M\'etivier \cite{alazard4} and Alazard-Burq-Zuily \cite{alazard1,alazard2,alazard3}. Interested readers may refer to those works for more details. Here, we  only briefly discuss this method here to help readers understand how this method works and get a sense of what they will read in this section.

For a fully nonlinear term, it is very hard to tell which part   actually loses derivatives and which part doesn't, which is clearly very important to get around the issue of losing derivatives. With the help of  the paralinearization process, we can identify the part that  actually loses derivatives, which is the real issue. In subsection \ref{paralinearDN}, we will do the paralinearization process for the nonlinearity of equation satisfied by the height ``$h$'', which is the Dirichlet-Neumann operator. In subsection \ref{paralinearvelocitypotential},   
we will do the paralinearization process for the nonlinearity of equation satisfied by the velocity potential ``$\psi$''.

Know which part loses derivative is certainly very helpful, but it doesn't imply that we can get around the issue of losing derivatives because the original system lacks good symmetric structures. With the help of the symmetrization process, in subsection \ref{symmetrizationsubsection}, we  identify good substitution variables such that the system of equations satisfied by the good substitution variables have requisite symmetries and moreover it has comparable size of energy as the original variables. Therefore, instead of doing energy estimate for the original variables, we do energy estimate for the good substitution variables.

\subsection{Paralinearization of the Dirichlet-Neumann operator}\label{paralinearDN}
In this subsection, our main goal is to identify which part of the Dirichlet-Neumann operator actually loses derivatives by using the paralinearization method. In the mean time, we also pay attentions to the low frequency part for the purpose of proving our new energy estimate (\ref{energyestimate}).

More precisely, the goal of this subsection is to prove the   Proposition as follows,
 
\begin{proposition}\label{prop1}
Let $k\geq 6$, $\alpha\in(0,1]$. Assume that $(\,h, \Lambda \psi)\in H^{k}$ and $\,h$ satisfies the smallness condition \textup{(\ref{smallness})}, then we have
\begin{equation}\label{paralinear}
G(\,h)\psi = T_{\lambda} (\psi - T_{B}\,h) - T_{V}\cdot \nabla \,h + F(\,h) \psi = \Lambda^2(\psi - T_{B} \,h) + T_{\lambda -|\xi|}(\psi-T_{B}\,h) - T_{V}\cdot \nabla \,h + \widetilde{F}(\,h)\psi,
\end{equation}
where
\begin{equation}\label{principalsymbol}
\lambda:= \sqrt{(1+|\nabla \,h|^2)|\xi|^2 - (\nabla \,h \cdot \xi)^2 },
\end{equation}
\begin{equation}\label{hvderivative}
B \overset{\text{abbr}}{=} B(\,h)\psi =\frac{G(\,h)\psi + \nabla \,h \cdot \nabla \psi}{1+|\nabla \,h|^2},\quad V\overset{\text{abbr}}{=} V(\,h)\psi= \nabla \psi - B \nabla \,h .
\end{equation}
The good remainder terms $F(\,h)\psi$ and $\widetilde{F}(h)\psi$ don't lose derivatives and they satisfy the following estimate,
\begin{equation}\label{goodremainder}
\|\Lambda_{\geq 2}[ F(\,h)\psi]\|_{H^{k}}+ \|\widetilde{ F}(\,h)\psi\|_{H^{k}}\lesssim_k \big[\|(\,h, \Lambda\psi)\|_{\widehat{W^{4,\alpha}}} + \| (\,h,\Lambda\psi)\|_{\widehat{W}^{4}}^2\big]
\big[ \|\,h\|_{H^k} + \|\nabla\psi\|_{H^{k-1}} \big].
\end{equation}

\end{proposition}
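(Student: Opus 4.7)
\emph{Plan of proof.} I would work with the Type II formulation (\ref{changco2}) in $\R^2\times[-1/2,0]$, whose coefficients $a'=1+|\n h|^2$, $b'=-2\n h$, $c'=-\Delta h$ are simple polynomials in $\n h$. First I would paralinearize all coefficient products via Bony's decomposition $fg=T_f g+T_g f+\mathcal{R}(f,g)$, and introduce Alinhac's good unknown
\[
u(x,w):=\Phi(x,w)-T_{\p_w\Phi(x,w)}\,h(x).
\]
A direct chain-rule computation at the paradifferential level should cancel the worst terms, showing that $u$ satisfies
\[
\p_w^2 u + T_{a'}\Delta u + T_{b'}\cdot\n\p_w u = R_0,
\]
where $R_0\in L^\infty_w H^k$ with norm controlled as in (\ref{goodremainder}). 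At this step Lemma \ref{Sobolevestimate} plays the key role: the $L^2$- and $L^\infty$-type bounds on $\n_{x,z}\varphi$ transfer to $\n_{x,w}\Phi$ via the change of variables (\ref{eqq4}).

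Next I would factorize the elliptic paradifferential operator as
\[
\p_w^2 + T_{a'}\Delta + T_{b'}\cdot\n\p_w = (\p_w-T_A)(\p_w-T_a) + R_1,
\]
with $R_1$ of order $0$. Matching principal symbols via Lemma \ref{composi} yields the conditions $A+a=2i\,\n h\cdot\xi$ and $Aa=-(1+|\n h|^2)|\xi|^2$; solving this quadratic and selecting the root whose real part is positive gives
\[
a^{(1)}=\lambda+i\,\n h\cdot\xi,\qquad A^{(1)}=-\lambda+i\,\n h\cdot\xi,
\]
with $\lambda$ as in (\ref{principalsymbol}); the sub-principal corrections are of order $0$ and absorb into $R_1$. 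Since $T_a$ corresponds to the mode that decays as $w$ decreases from $0$, inverting the outgoing factor $\p_w-T_A$ on $[-1/2,0]$ with source $R_0+R_1 u$ then produces
\[
\p_w u\bigl|_{w=0} = T_a u\bigl|_{w=0} + F_1,
\]
where $F_1$ satisfies the bound in (\ref{goodremainder}).

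Finally I would read off the formula. Since $\Phi|_{w=0}=\psi$ and $\p_w\Phi|_{w=0}=B$, we have $u|_{w=0}=\psi-T_Bh$, so the previous identity rewrites as $B-T_{\p_w^2\Phi|_{w=0}}h=T_\lambda(\psi-T_Bh)+T_{\n h}\cdot\n(\psi-T_Bh)+F_1$. I would then express $\p_w^2\Phi|_{w=0}$ from the PDE (\ref{changco2}) (it is a combination of $\Delta\psi$, $\n B$, $B$, and $\n h$), combine with the identity $G(h)\psi=(1+|\n h|^2)B-\n h\cdot\n\psi$ from (\ref{DN2}), and regroup paraproducts using $V=\n\psi-B\n h$; what survives is exactly $T_\lambda(\psi-T_Bh)-T_V\cdot\n h$, with the rest absorbed into $F(h)\psi$. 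For the second identity in (\ref{paralinear}) I would split $T_\lambda=T_{|\xi|}+T_{\lambda-|\xi|}$ and use that $T_{|\xi|}-\Lambda^2$ is exponentially smoothing at high frequency, so that its difference only contributes a low-frequency piece absorbed into $\tilde F(h)\psi$. The hard part will be verifying (\ref{goodremainder}) with the sharp $\alpha$-gain at low frequency encoded in the $\widehat{W^{4,\alpha}}$ norm: each error piece is either a Bony remainder (smoothing), a paradifferential commutator gaining one derivative via Lemma \ref{composi}, or a quadratic source in $\n_{x,w}\Phi$ controlled by Lemma \ref{Sobolevestimate}; however, obtaining the $\alpha$-gain additionally requires reinspecting the quadratic parts using the explicit formula (\ref{eqq345}) for $\Lambda_2[G(h)\psi]$ together with the symbol bound in Lemma \ref{symbol}, and tracking how the cancellations propagate through the good unknown substitution and the factorization.
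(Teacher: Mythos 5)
Your overall strategy — Type II formulation, Alinhac's good unknown $\Phi-T_{\p_w\Phi}h$, paradifferential factorization into forward/backward first-order factors, inverting the stable factor, then reading off the boundary identity — is exactly the route the paper takes (together with the elliptic propagation estimate in Lemma~\ref{elliptic}). However, there is a genuine error in the factorization step. The paralinearized version of (\ref{changco2}) is $\Delta+T_{a'}\p_w^2+T_{b'}\cdot\nabla\p_w+T_{c'}\p_w$, with $a'=1+|\nabla h|^2$ multiplying $\p_w^2$, not $\Delta$. You wrote instead $\p_w^2+T_{a'}\Delta+T_{b'}\cdot\nabla\p_w$ and matched symbols accordingly, arriving at $A+a=2i\nabla h\cdot\xi$ and $Aa=-(1+|\nabla h|^2)|\xi|^2$, hence $a^{(1)}=\lambda+i\nabla h\cdot\xi$, $A^{(1)}=-\lambda+i\nabla h\cdot\xi$. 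Since the coefficient of $\p_w^2$ is $a'\neq 1$, a factorization of the form $(\p_w-T_A)(\p_w-T_a)$ cannot match the principal part; you must pull out $T_{a'}$ as in Lemma~\ref{decomp}, which produces the conditions $a'(a^{(1)}+A^{(1)})=-2i\nabla h\cdot\xi$ and $a'\,a^{(1)}A^{(1)}=-|\xi|^2$ and therefore the symbols carry an overall factor $1/(1+|\nabla h|^2)$.

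This is not a cosmetic mismatch: tracking your unnormalized $a^{(1)}$ through the final regrouping, $T_{1+|\nabla h|^2}\p_w W\approx T_{(1+|\nabla h|^2)a^{(1)}}W=T_{(1+|\nabla h|^2)\lambda}W+T_{(1+|\nabla h|^2)\nabla h}\cdot\nabla W$, so after subtracting $T_{\nabla h}\cdot\nabla W$ you are left with $T_{(1+|\nabla h|^2)\lambda}W+T_{|\nabla h|^2\nabla h}\cdot\nabla W-T_V\cdot\nabla h$ rather than $T_\lambda W-T_V\cdot\nabla h$. The spurious cubic pieces such as $T_{|\nabla h|^2\lambda}(\psi-T_Bh)$ are of order $1$ in the high-frequency factor and hence cost one derivative on $\psi$; they cannot be absorbed into a remainder satisfying (\ref{goodremainder}), which only allows $\|\nabla\psi\|_{H^{k-1}}$. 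The correct normalization $A^{(1)}=\frac{1}{1+|\nabla h|^2}(i\nabla h\cdot\xi+\lambda)$ is precisely what makes $T_{1+|\nabla h|^2}T_{A^{(1)}}-T_{\nabla h}\cdot\nabla=T_\lambda+R_2$ with $R_2$ of order $0$, as in (\ref{eqn1520}). The rest of your plan — inverting the decaying factor via an elliptic estimate, reading off the boundary value, splitting $T_\lambda=\Lambda^2+T_{\lambda-|\xi|}+(T_{|\xi|}-\Lambda^2)$, and verifying the $\alpha$-gain at low frequency using the explicit formula for $\Lambda_2[G(h)\psi]$ — is sound and matches the paper.
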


\begin{remark}
We remark that, unlike the infinite depth setting, the good remainder term $F(\,h)\psi$ in (\ref{paralinear}) actually contains a linear term, which is $[\tanh(|\nabla|)-1]|\nabla|\psi\in H^{\infty}$.
\end{remark}

For simplicity, we define the following equivalence relation. For two well defined nonlinearities   $A$ and $B$, which are nonlinear with respect to $\,h$ and $\psi$, we  denote
\[
A\thickapprox B, \quad \textup{if and only if $A-B$ is a good error term in the sense of (\ref{gooderror})},
\]
\begin{equation}\label{gooderror}
\|\textup{good error term}\|_{H^{k}}\lesssim_{k} \big[\|(\,h, \Lambda\psi)\|_{\widehat{W^{4,\alpha}}} + \| (\,h,\Lambda \psi)\|_{\widehat{W}^{4}}^2\big]
\big[ \|\,h\|_{H^k} + \|\nabla\psi\|_{H^{k-1}} \big],\,\, \alpha\in(0,1], k\geq 0.
\end{equation}

Recall (\ref{DN2}). Note that, essentially speaking, the only fully nonlinear term inside the Dirichlet-Neumann operator $G(h)\psi$ is $\p_w \Phi\big|_{w=0}$. So the task is reduced to identify which part of $\p_w \Phi$ is actually losing derivative. 

To this end, we will show that there exists a pseduo differential operator $A(x, \xi)$ such that $\p_w \Phi -T_A(\Phi -T_{\p_w \Phi} h)$ actually doesn't lose derivatives, where `` $\Phi- T_{\p_w \Phi}  h$'' is the so-called good unknown variable. This step is very nontrivial and technical. Unfortunately, to the best of my knowledge, there is no physical intuitive explanation available.   It relies  heavily on the study of good structures of the Laplace equation (\ref{changco2}). We do this step in details in the subsubsection as follows.

\subsubsection{Paralinearization of the Laplace equation \textup{(\ref{changco2})} }

 Recall (\ref{changco2}) and the fact that $g'(\cdot, w)=0$ when $w\in[-1/4,0]$, we have
\begin{equation}\label{eqn710}
[ \Delta_{x}+ a'\p_w^2 +  b'\cdot \nabla \p_w + c'\p_w ]\Phi = 0,
\end{equation}
\[
a'= 1 + |\nabla \,h|^2\ta 1+2 T_{\nabla \,h}\cdot \nabla \,h, \quad b'= -2\nabla \,h, \quad c'= -\Delta \,h.
\]
We remark that  $w$  is  also restricted inside $[-1/4,0]$ in the rest of this paper.

Before proceeding to do the paralinearization process for (\ref{eqn710}), we need some necessary estimates of $``\Phi"$. Essentially speaking, under certain smallness condition, the size of `'$\Phi$'' is comparable to $``\varphi"$. Note that  we already have necessary estimates of $\varphi$, see Lemma \ref{Sobolevestimate}.
 More precisely,
from definition of $\Phi$ (see subsection \ref{type2}) and estimates of $\varphi$ in  Lemma \ref{Sobolevestimate}, the following lemma holds,
\begin{lemma}\label{IntermsofPhi}
Under the smallness estimate \textup{(\ref{smallnessestimate})}, we have the following estimates for $k\geq 1,\gamma \leq 3,$
\begin{equation}\label{eqn3123}
\sup_{w\in[-1/4, 0]} \| \nabla_{x,w}\Phi\|_{H^{k}}\lesssim \|\nabla\psi\|_{H^k} + \| \,h\|_{H^{k+1}}\|\nabla \psi\|_{\widetilde{W^1}},\end{equation}
\begin{equation}\label{eqn3124}
\sup_{w\in[-1/4,0]}\| \nabla_{x}\Phi\|_{\widetilde{W^\gamma}} \lesssim \| \nabla \psi \|_{\widetilde{W^{\gamma}}} , \quad \sup_{w\in[-1/4,0]}\| \p_w \Phi\|_{\widetilde{W^{\gamma}}}\lesssim \| \nabla \psi\|_{\widehat{W}^{\gamma,\alpha}}+  \| \,h\|_{\widetilde{W^{\gamma+1}}} \| \nabla \psi \|_{\widetilde{W^\gamma}},
\end{equation}
\begin{equation}\label{eqn3125}
\sup_{w\in[-1/4,0]} \| \Lambda_{\geq 2}[\nabla_{x,w} \Phi] \|_{L^2}\lesssim\big[ \|(\,h,\Lambda \psi)\|_{\widehat{W^{2,\alpha}}}+\|(\,h, \Lambda \psi)\|_{\widehat{W^2}}^2\big]\big[ \|\,h\|_{H^1} + \|\nabla \psi\|_{L^2} \big].
\end{equation}
\end{lemma}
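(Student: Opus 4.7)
The plan is to leverage Lemma \ref{Sobolevestimate} by translating its bounds on $\nabla_{x,z}\varphi$ into bounds on $\nabla_{x,w}\Phi$ via the pointwise identities (\ref{eqq4}) and the change of variable $z = w/(1+h)$. The smallness assumption (\ref{smallnessestimate}) ensures $1+h$ is uniformly bounded above and below by positive constants, so for $w \in [-1/4, 0]$ the corresponding $z$ lies in a compact subinterval of $[-1, 0]$, and all $L^\infty_z$ bounds of $\varphi$ from Lemma \ref{Sobolevestimate} (taken with $\gamma' = \gamma+1 \leq 4$ and $k' = k+1$) transfer to $L^\infty_w$ bounds on the composition $\varphi(x, w/(1+h(x)))$ up to harmless constants after standard Moser-type composition estimates.

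For (\ref{eqn3123}) and (\ref{eqn3124}), I would start from
\[
\partial_w\Phi = \frac{\partial_z\varphi}{1+h}, \qquad \partial_{x_i}\Phi = \partial_{x_i}\varphi - \frac{w\,\partial_{x_i}h}{(1+h)^2}\partial_z\varphi,
\]
expand $1/(1+h)$ and $1/(1+h)^2$ as power series in $h$ (convergent under $\|h\|_{\widetilde{W^4}} \leq 2\delta$), and estimate each resulting term by the bilinear estimate (\ref{bilinearesetimate}) together with (\ref{eqn2200})--(\ref{eqn2201}). The $\widehat{W^{\gamma,\alpha}}$-norm appearing in the estimate of $\partial_w\Phi$ is inherited directly from the corresponding factor in (\ref{eqn2201}), while the $\widetilde{W^\gamma}$-bound on $\partial_{x_i}\Phi$ follows because the correction term $-w\partial_{x_i}h(1+h)^{-2}\partial_z\varphi$ has an explicit $\nabla h$ factor that is small in $\widetilde{W^{\gamma+1}}$.

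For the quadratic-and-higher estimate (\ref{eqn3125}), I would decompose $\nabla_{x,w}\Phi$ into its linear and higher-order parts. Since $w/(1+h) - w = O(h)$, the linear part $\Lambda_1[\nabla_{x,w}\Phi]$ coincides with $\Lambda_1[\nabla_{x,z}\varphi]|_{z=w}$ from (\ref{eqn773}); all of the discrepancy contributes to $\Lambda_{\geq 2}$. The remainder splits into three groups: the direct contribution $\Lambda_{\geq 2}[\nabla_{x,z}\varphi]|_{z=w/(1+h)}$, bounded by (\ref{eqn2202}); a Taylor-type correction $(w/(1+h)-w)\partial_z\Lambda_1[\nabla_{x,z}\varphi]|_{z=w}$ plus higher Taylor terms, each with an explicit factor of $h$; and the explicit products from (\ref{eqq4}), namely $-h(1+h)^{-1}\partial_z\varphi$ and $-w(1+h)^{-2}\partial_{x_i}h\,\partial_z\varphi$. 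Each piece is at least bilinear in $(h,\Lambda\psi)$, so applying (\ref{bilinearesetimate}) with one factor placed in $\widehat{W^{2,\alpha}}$ (or $\widehat{W^2}$ when two $L^\infty$-type factors appear) and the other in $H^1$ or $L^2$ produces precisely the right-hand side of (\ref{eqn3125}).

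The main technical obstacle is preserving the low-frequency gain of $\alpha$ derivatives from (\ref{eqn2201}) across the composition and multiplication by $(1+h)^{-k}$. This requires checking that multiplication by a smooth function of $h$, with $h$ small in $\widetilde{W^4}$, does not destroy the $2^{\alpha k}$ weight at low frequency beyond paraproduct errors that are themselves absorbed into the quadratic term $\|(h,\Lambda\psi)\|_{\widehat{W^2}}^2$; this is where the precise structure of the $\widehat{W^{\gamma,\alpha}}$ norm (which is an $\ell^1$-type Besov-like norm over dyadic blocks) is used, together with Lemma \ref{Snorm} to control the relevant paraproduct symbols.
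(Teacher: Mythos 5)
Your overall strategy—transfer the $\varphi$-bounds from Lemma \ref{Sobolevestimate} to $\Phi$-bounds via the identity (\ref{eqn3110}) and the change of variable $z=w/(1+h)$—is the same one the paper uses, and your identification of the three error groups for (\ref{eqn3125}) is also correct in outline. However, there is a genuine gap in how you propose to control the composition $\nabla_{x,z}\varphi(x,\,w/(1+h(x)))$, and the paper's proof spends essentially all of its effort on precisely this point.

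The problem is that (\ref{eqn2200})--(\ref{eqn2202}) control norms of the form $\sup_{z}\|\nabla_{x,z}\varphi(\cdot,z)\|_{X}$, i.e.\ $L^\infty_z X_x$; they do \emph{not} control $\|\nabla_{x,z}\varphi(x,z(x))\|_{X_x}$ when $z=z(x)$ depends on $x$. For $X=L^2$ this would require $L^2_x L^\infty_z$ control, which is not the same thing, and interpolating to $L^2_xL^\infty_z$ costs a $\partial_z$, hence an extra $x$-derivative, which would ruin (\ref{eqn3125}). So the step ``the direct contribution $\Lambda_{\geq 2}[\nabla_{x,z}\varphi]|_{z=w/(1+h)}$, bounded by (\ref{eqn2202})'' does not follow, and similarly ``standard Moser-type composition estimates'' does not resolve this: the composition does not commute with Littlewood--Paley projections, so $\widetilde{W^\gamma}$ and $H^k$ norms of the composed function are not controlled simply by the $\sup_z$ of the corresponding norms. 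Your Taylor expansion in $z$ is the right instinct, but it only covers the explicit linear part and leaves exactly the higher-order composition unaddressed.

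The paper avoids this issue by never invoking an abstract composition estimate. For the linear part it writes the composed Fourier multiplier explicitly in terms of the symbol $p_{\pm}(w,x,\xi)$, expands the factor $e^{\pm h w|\xi|/(1+h)}-1$ as a power series in $h/(1+h)$, and treats each term as a bilinear operator $\sum_n \frac{1}{n!}f_n^j(w,\xi)g_n(x)$; these are then estimated via the $\mathcal{S}^\infty$ bilinear bound (\ref{bilinearesetimate}) after splitting into the frequency regimes $\mathrm{I}$ (frequency of $\psi$ dominant, placing $\psi$ in $L^2$) and $\mathrm{II}$ (frequency of $g_n$ dominant, placing $g_n$ in $L^2$). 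For the integral (higher-order) part the paper relies on the fact that the kernel bounds (\ref{equation320}), (\ref{equation332}) from Lemma \ref{integraloperator} are uniform in $z\in[-1,0]$, so the fixed-point formula (\ref{fixedpoint}) can be evaluated at $z=z(x)$ and estimated directly in terms of $\|g_i\|_{L^\infty_z H^k}$ and $\|g_i\|_{L^\infty_z L^2}$ without any composition loss. Finally, for (\ref{eqn3125}) the paper exploits that in each $g_i$ the $\nabla\varphi$-factor always sits next to a $\nabla h$-factor, so $\nabla\varphi$ can be put in $L^2$ and $\partial_z\varphi$ in $L^\infty$—this is how the right-hand side of (\ref{eqn3125}) avoids an extra derivative on $\psi$, a structure your proposal does not highlight. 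To repair your argument, you would need to replace the appeal to (\ref{eqn2202}) and to Moser by this explicit symbol/kernel expansion, or at minimum prove a uniform-in-$z$ version of Lemma \ref{integraloperator} that is stable under the $x$-dependent evaluation $z\mapsto z(x)$.
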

\begin{proof}
Postponed to the end of this subsection for the purpose of improving presentation.
 \end{proof}

After paralinearizing (\ref{eqn710}), we have
\begin{equation}\label{eqn712}
P\Phi + 2T_{\p_{w}^2 \Phi} T_{\nabla\,h}\cdot \nabla \,h -2 T_{\nabla \p_{w}\Phi} \cdot \nabla \,h - T_{\p_w \Phi} \Delta \,h \ta 0,
\end{equation}
where
\begin{equation}\label{operatorP}
P:=[\Delta + T_{a'}\p_{w}^2 + T_{b'\cdot } \nabla\p_w + T_{c'} \p_w].
\end{equation}
To see  why the equivalence relation (\ref{eqn712}) holds, we mention that we can always put $\nabla h$ in $L^\infty$ and   put $\p_w\Phi$ and $\p_w^2  \Phi$ in $L^2$. 

 Define $W:= \Phi - T_{\p_w \Phi}\,h$. Same as in \cite{alazard1}, we claim that $P W \ta 0$ when $w\in[-1/4,0]$.  After using (\ref{equation340}) and the composition Lemma \ref{composi}, the following equivalence relations hold, 
\begin{equation}\label{eqn720}
P W \ta 0  \Longleftrightarrow P [T_{\p_{w}\Phi} \,h] +  2T_{\p_{w}^2 \Phi} T_{\nabla\,h}\cdot \nabla \,h -2 T_{\nabla \p_{w}\Phi} \cdot \nabla \,h - T_{\p_w \Phi} \Delta \,h\ta  0,
\end{equation}
\begin{equation}\label{eqn721}
\Longleftrightarrow \big[T_{a'} T_{\p_{w}^3 \Phi} \,h + T_{b'}\cdot \nabla T_{\p_{w}^2\Phi} \,h + T_{c'}T_{\p_{w}^2\Phi} \,h + 2T_{\nabla\p_w \Phi} \cdot \nabla \,h  \big] + \big[ 2 T_{\p_{w}^2 \Phi} T_{\nabla\,h}\cdot \nabla \,h -2 T_{\nabla \p_{w}\Phi} \cdot \nabla \,h  \big]\ta 0
\end{equation}
\begin{equation}\label{eqn722}
\Longleftrightarrow  \big[ T_{b'}\cdot T_{\p_{w}^2 \Phi}\nabla \,h +  2T_{\nabla\p_w \Phi} \cdot \nabla \,h \big] + \big[ 2T_{\p_{w}^2 \Phi} T_{\nabla\,h}\cdot \nabla \,h -2 T_{\nabla \p_{w}\Phi} \cdot \nabla \,h \big]\ta 0
\end{equation}
\begin{equation}\label{eqn723}
\Longleftrightarrow T_{[b'\p_{w}^2 \Phi + 2\p_{w}^2\Phi \nabla \,h ]}\cdot \nabla \,h \ta 0 \end{equation}
\begin{equation}\label{eqn742}
\Longleftrightarrow 0 \ta 0, \quad \textup{as $b'=-2\nabla \,h$}.
\end{equation}
Obviously, (\ref{eqn742}) holds. Hence, we can reverse the directions of all arrows back to conclude $P W \ta 0$. 

\par Although tedious, it is  not  difficult to verify  all  ``$\ta$ " equivalence relations  hold in above equations. As a typical example, we  give a detail proof of (\ref{eqn721}) here. To prove (\ref{eqn721}), it would be  sufficient to estimate $T_{\Delta\p_{w}\Phi} h$. From estimate (\ref{eqn3124}) in Lemma \ref{IntermsofPhi}, we have,
\[
\sup_{w\in[-1/4,0]} \| T_{\Delta \p_w \Phi} \,h\|_{H^{k}}
\lesssim \sup_{w\in[-1/4,0]} \| \,h\|_{H^k} \|\p_w\Phi\|_{\widetilde{W^2}}\lesssim [ \|\Lambda\psi\|_{\widehat{W}^{4,\alpha}} +\|(\,h,\Lambda\psi)\|_{\widehat{W}^4}^2]\|\,h\|_{H^k}.
\]
Hence, the equivalence relation (\ref{eqn721}) holds. All other equivalence relations can be obtained very similarly.

The next step is to decompose the equation $P W \ta 0$ into a forward evolution equation  and  a backward evolution equation, which shows that $\p_w W-T_A W  $ actually doesn't lose derivatives. Note that  $\p_w W-T_A W\approx \p_w \Phi -T_A(\Phi -T_{\p_w \Phi} h) $. Hence, our desired result is obtained.

More precisely, we have the following Lemma,   
\begin{lemma}\label{decomp}
There exist two symbols $a=a(x,\xi)$ and $A(x,\xi)$ with 
\[
a= a^{(1)}+ a^{(0)}, \quad  A = A^{(1)} + A^{(0)}, 
\]
where
\[
a^{(1)}(x,\xi)= \frac{1}{1+|\nabla \,h|^2}(i \nabla \,h\cdot \xi - \sqrt{(1+|\nabla \,h|^2)|\xi|^2-(\nabla\,h \cdot\xi)^2}),
\]
\be\label{eqqq098}
A^{(1)}(x,\xi)=  \frac{1}{1+|\nabla \,h|^2}(i \nabla \,h\cdot \xi + \sqrt{(1+|\nabla \,h|^2)|\xi|^2-(\nabla\,h \cdot\xi)^2}),
\ee
\[
a^{(0)}(x,\xi)= \frac{1}{A^{(1)}- a^{(1)}}\Big(i \p_{\xi}a^{(1)}\cdot \p_x A^{(1)} - \frac{\Delta \,h \,\,a^{(1)}}{1+|\nabla \,h|^2}\Big),
\]
\[
A^{(0)}(x,\xi)=  \frac{1}{a^{(1)}- A^{(1)}}\Big(i \p_{\xi}a^{(1)}\cdot \p_x A^{(1)} - \frac{\Delta \,h \,\,A^{(1)}}{1+|\nabla \,h|^2}\Big),
\]
such that 
\begin{equation}\label{eqn1450}
P = T_{a'}(\p_w - T_{a})(\p_w -T_{A})  + R_0 + R_1\p_w,\quad a'(a+A)= i b'\cdot \xi + c',
\end{equation}
\begin{equation}\label{eqn1452}
a'[a^{(1)}A^{(1)} + \frac{1}{i} \p_{\xi}a^{(1)} \cdot\p_x A^{(1)} + a^{(1)} A^{(0)} + a^{(0)}A^{(1)}] = a'(a\sharp A)= -|\xi|^2, 
\end{equation}
where
\begin{equation}\label{equation210}
R_0 = T_{a'}T_{a}T_A - \Delta, \quad R_1 = - T_{a'} T_{a+A} + T_{b'}\cdot \nabla + T_{c'}.
\end{equation}
Moreover, the following estimate holds for good error operators $R_0$ and $R_1$,
\begin{equation}\label{errorestimate}
\| R_0 f \|_{H^k} + \| R_1 f\|_{H^{k+1}}\lesssim \| \nabla \,h\|_{\widetilde{W^3}} \| f\|_{H^k}. 
\end{equation}
\end{lemma}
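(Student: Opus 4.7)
The plan is to determine $a$ and $A$ by matching paradifferential symbols order by order in the factorization and then control the discrepancy using the composition calculus of Lemma \ref{composi}. Since $h = h(t,x)$ is independent of $w$, the paraoperators $T_{a'}$, $T_a$, $T_A$ all commute with $\partial_w$, and a direct expansion gives
\[
T_{a'}(\partial_w - T_a)(\partial_w - T_A) = T_{a'}\partial_w^2 - T_{a'}(T_a + T_A)\partial_w + T_{a'}T_a T_A.
\]
Comparing this with $P = \Delta + T_{a'}\partial_w^2 + T_{b'}\cdot\nabla \partial_w + T_{c'}\partial_w$, the $\partial_w^2$ terms cancel automatically, while the remaining symbol-level identities become (\ref{eqn1450}) for the coefficient of $\partial_w$ and (\ref{eqn1452}) for the $\partial_w$-free part, up to commutator errors that are absorbed into $R_1$ and $R_0$ as defined in (\ref{equation210}).

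Next, for the principal symbols $a^{(1)}, A^{(1)} \in \dot{\Gamma}^1_2$, I would match the order-$1$ part of the $\partial_w$-coefficient relation with the order-$2$ part of the $\partial_w$-free relation. This forces
\[
a^{(1)} + A^{(1)} = \tfrac{2 i \nabla h \cdot \xi}{1+|\nabla h|^2}, \qquad a^{(1)} A^{(1)} = -\tfrac{|\xi|^2}{1+|\nabla h|^2},
\]
so $a^{(1)}, A^{(1)}$ are the two roots of a quadratic whose discriminant equals $\tfrac{4[(1+|\nabla h|^2)|\xi|^2 - (\nabla h\cdot \xi)^2]}{(1+|\nabla h|^2)^2}$, strictly positive for $\xi \neq 0$ by Cauchy--Schwarz; the two sign choices of the square root produce exactly the two formulas in the statement. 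For the subprincipal pieces I would expand $a \sharp A = aA + \tfrac{1}{i}\partial_\xi a \cdot \partial_x A$ up to a symbol of order $\le 0$, and then extract the order-$1$ coefficient from (\ref{eqn1452}) together with the order-$0$ coefficient from (\ref{eqn1450}). These two conditions form a $2\times 2$ linear system for $(a^{(0)}, A^{(0)})$ whose determinant $A^{(1)} - a^{(1)}$ is nonvanishing, and Cramer's rule yields the claimed closed-form expressions.

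With $a$ and $A$ in hand, the remaining step is to verify (\ref{errorestimate}). For $R_0$ I would split
\[
T_{a'}T_a T_A = T_{a' \sharp (a \sharp A)} + \bigl[T_{a'}T_{a\sharp A} - T_{a'\sharp(a\sharp A)}\bigr] + T_{a'}\bigl[T_a T_A - T_{a \sharp A}\bigr],
\]
and apply Lemma \ref{composi} with $\rho = 2$ to each bracketed term, which places it in $S^0$ with operator norm bounded by a product of $M^1_2(a), M^1_2(A), M^0_2(a')$. Since by construction $a'\sharp(a\sharp A)$ agrees with $-|\xi|^2$ up to a symbol of order $\le 0$, the operator $T_{a'\sharp(a\sharp A)}$ coincides with $-\Delta$ up to a bounded operator, so $R_0 \in S^0$. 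The argument for $R_1$ is parallel and one composition shallower. The $M^m_\rho$ norms of $a$, $A$, $a'$ are controlled by $\|\nabla h\|_{\widetilde{W^3}}$ through their explicit formulas and the smallness hypothesis, giving (\ref{errorestimate}).

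The main technical obstacle I anticipate is threading the triple composition $T_{a'} T_a T_A$ cleanly through Lemma \ref{composi}: one must work at level $\rho = 2$ throughout in order to retain the subprincipal correction $\tfrac{1}{i}\partial_\xi a^{(1)} \cdot \partial_x A^{(1)}$ generated by $a \sharp A$, track how it interacts with the order-$0$ constraint on $a^{(0)} + A^{(0)}$, and reconcile the paradifferential operator $T_{-|\xi|^2}$ with the genuine Laplacian $-\Delta$ modulo a bounded operator, so that $R_0$ honestly lands in $S^0$ instead of at top order.
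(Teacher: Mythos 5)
Your overall strategy — expand the product, read off matching conditions, solve a quadratic for the principal symbols, solve a $2\times 2$ linear system for the sub-principal ones, then control $R_0$ and $R_1$ via Lemma \ref{composi} — is the same as the paper's. The paper simply cites Alazard--Burq--Zuily for the construction of $a$ and $A$, so your explicit derivation is a more self-contained version of the same thing, and your algebra for $a^{(1)}, A^{(1)}$ checks out against the formulas in the lemma.

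However, there is a real gap in your final estimate. You write that ``the $M^m_\rho$ norms of $a$, $A$, $a'$ are controlled by $\|\nabla h\|_{\widetilde{W^3}}$,'' but this is false: $a'=1+|\nabla h|^2$ has $M^0_2(a')\sim 1$, and $a^{(1)}$, $A^{(1)}$ reduce to $\mp|\xi|$ at $h=0$, so $M^1_2(a)\sim M^1_2(A)\sim 1$. A naive application of Lemma \ref{composi} to the bracket $T_{a'}T_{a\sharp A}-T_{a'\sharp(a\sharp A)}$ in your decomposition therefore only yields a bound of size $M^0_2(a')M^2_2(a\sharp A)\sim 1$, not $\lesssim\|\nabla h\|_{\widetilde{W^3}}$ as required by \eqref{errorestimate}. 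What rescues the estimate is that the composition error between two purely constant-coefficient symbols (Fourier multipliers) vanishes, so one must peel off the $h$-independent leading parts before invoking the composition lemma. The paper does exactly this for the $a'$ factor: since $a'$ is $\xi$-independent, it writes $T_{a'}T_{a\sharp A}-T_{a'(a\sharp A)}=T_{(a'-1)}T_{a\sharp A}-T_{(a'-1)\sharp(a\sharp A)}$, for which $M^0_2(a'-1)\lesssim\|\nabla h\|^2_{\widetilde{W^3}}$ delivers the small factor. The same maneuver is needed for $T_a T_A-T_{a\sharp A}$: subtract the $h=0$ values $\mp|\xi|$ from $a$ and $A$ (and use $a+A$ rather than $a$ and $A$ separately for $R_1$, since $a^{(1)}+A^{(1)}=\tfrac{2i\nabla h\cdot\xi}{1+|\nabla h|^2}$ vanishes at $h=0$), so that only symbol pieces with $M^m_\rho\lesssim\|\nabla h\|_{\widetilde{W^3}}$ survive. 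Without this extraction of the constant-coefficient parts, your argument proves only $R_0, R_1\in S^0$ (resp.\ $S^{-1}$) with $O(1)$ norm, which is strictly weaker than \eqref{errorestimate} and insufficient for the applications in Lemma \ref{doesntlosederivative}.
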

\begin{proof}
Most parts of above Lemma are cited directly from \cite{alazard1}[Lemma 3.18]. Given the  a-priori decomposition (\ref{eqn1450}), from (\ref{operatorP}), we can calculate explicitly the formulas of  $R_0$ and $R_1$, which are given in  (\ref{equation210}).  Note that  $a'$ doesn't depend on $\xi$, from (\ref{eqn1450}), (\ref{eqn1452}) and (\ref{equation210}), we have the following identities, 
\[
R_1= -T_{a'}T_{a+A} +T_{a'(a+A)}= -T_{a'}T_{a+A}+ T_{(a'\sharp (a+A))},\]
\[
R_0= T_{a'}[T_{a}T_{A}-T_{a\sharp A}] + T_{a'}T_{a\sharp A} - T_{a'(a\sharp A)}= T_{a'}[T_{a}T_{A}-T_{a\sharp A}] + T_{(a'-1)}T_{a\sharp A} - T_{(a'-1)\sharp(a\sharp A)} .
\]
From explicit formulations of $a'$, $a$ and $A$, we can see that $a', a'-1\in \Gamma^{0}_2(\R^2), a, A, a+A\in \Gamma^{1}_2(\R^2)$, $a\sharp A \in \Gamma^{2}_2(\R^2)$.  The following estimates  on their symbolic bounds hold, 
\[
M^2_2(a\sharp A)+M^{0}_{2}(a')\lesssim 1,\quad  M^{1}_2(a) +M^1_2(A) + M^1_2(a+A)\lesssim \|\nabla \,h\|_{\widetilde{W^3}}, \quad M_2^0(a'-1) \lesssim \|\nabla\,h\|_{\widetilde{W^3}}^2.
\]
From estimate (\ref{eqn700}) in Lemma \ref{composi}, we have
\[
\| R_1 f\|_{H^{k+1}} \lesssim M^{0}_2(a') M^{1}_2(a+A) \| f\|_{H^k}\lesssim \| \nabla \,h\|_{\widetilde{W^3}}\|f\|_{H^k},
\]
\[
\| R_0 f\|_{H^k}\lesssim \big[M_2^0(a') M^1_2(a)M_2^1(A) + M_2^0(a'-1) M_2^2(a\sharp A)\big] \| f\|_{H^k} \lesssim \| \nabla\,h\|_{\widetilde{W^3}}^2 \| f\|_{H^k}.
\]
Hence finishing the proof of (\ref{errorestimate}).
 \end{proof}
In the following Lemma, we prove that $\p_w W - T_A W$ doesn't lose derivative.
\begin{lemma}\label{doesntlosederivative}
Let $A(x,\xi)$ be defined in Lemma \textup{\ref{decomp}}, we have  the following estimate for $k\geq 1,$
\be\label{eqqq675}
\| \Lambda_{\geq 2}\big[(\p_w W - T_A W)\big]\big|_{w=0}\|_{H^k}\lesssim_k  \big[\|(\,h, \Lambda \psi)\|_{\widehat{W^{4,\alpha}}} +  \|(\,h, \Lambda \psi)\|_{\widehat{W^{4}}}^2 \big]\big[ \|\,h\|_{H^k} +\|\nabla \psi\|_{H^{k-1}} \big].
\ee
\end{lemma}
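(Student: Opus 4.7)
The plan is to realize $U := (\partial_w - T_A) W$ as the solution of a forward parabolic equation in the variable $w$, propagate from $w = -1/2$ (where the data is well-controlled because the cutoff $\chi$ vanishes) to $w = 0$, and use the smoothing of the associated semigroup to absorb the potential loss of derivatives. Starting from the equivalence $PW \approx 0$ established in (\ref{eqn720})--(\ref{eqn742}) and the factorization $P = T_{a'}(\partial_w - T_a)(\partial_w - T_A) + R_0 + R_1 \partial_w$ of Lemma \ref{decomp}, one obtains $T_{a'}(\partial_w - T_a) U + R_0 W + R_1 \partial_w W \approx 0$. Since $a' = 1 + |\nabla h|^2$ is close to $1$, $T_{a'}$ is invertible modulo errors of the good type (\ref{gooderror}), so
\[
(\partial_w - T_a) U = r(w), \qquad w\in[-1/2, 0],
\]
where $r$ collects: (i) the commutator errors generated by the paralinearization $PW \approx 0$, shown bilinear in $(h, \Lambda\psi)$ by the same argument that proved (\ref{eqn721}); and (ii) the terms $-T_{1/a'}(R_0 W + R_1 \partial_w W)$, whose $H^k$-norm is bounded via (\ref{errorestimate}) and the estimates (\ref{eqn3123})--(\ref{eqn3125}) on $\nabla_{x,w}\Phi$ by a constant times $\|\nabla h\|_{\widetilde{W^3}}(\|\nabla \psi\|_{H^{k-1}} + \|h\|_{H^k})$, which fits the good error class.

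For the initial data at $w=-1/2$: because $\chi$ vanishes at $-1/2$ we have $W(-1/2) = 0$, while $\partial_w W(-1/2)$ reduces to $\chi'(-1/2)\phi$ evaluated strictly inside $\Omega_1(t)$, which is smooth and controlled by the interior Sobolev estimate (\ref{eqn2200}) on $\nabla_{x,z}\varphi$; in particular $\|\Lambda_{\geq 2}[U(-1/2)]\|_{H^k}$ is much better than the claimed bound (it essentially lives in $H^\infty$). The main analytic step is the forward parabolic estimate on $[-1/2, 0]$. Because $a^{(1)}$ in (\ref{eqqq098}) has $\operatorname{Re} a^{(1)} = -\sqrt{(1+|\nabla h|^2)|\xi|^2-(\nabla h\cdot\xi)^2}/(1+|\nabla h|^2) \lesssim -|\xi|$ for $|\xi|\geq 1/2$ under the smallness (\ref{smallnessestimate}), the operator $\partial_w - T_a$ is parabolic in the forward $w$ direction. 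Using Lemma \ref{adjoint} to replace $T_a$ by its self-adjoint part up to a zero-order remainder, a standard $H^k$ energy inequality
\[
\frac{d}{dw}\|U\|_{H^k}^2 + 2c \|\Lambda^{1/2} U\|_{H^k}^2 \lesssim \|r\|_{H^k}^2 + \|\nabla h\|_{\widetilde{W^3}}\,\|U\|_{H^k}^2
\]
followed by integration in $w$ and Gr\"onwall yields $\|U(0)\|_{H^k} \lesssim \|U(-1/2)\|_{H^k} + \|r\|_{L^2_w H^k}$, which combined with the two preceding paragraphs gives the desired control of $\|U(0)\|_{H^k}$. Finally, taking $\Lambda_{\geq 2}$ discards the linear part $\Lambda_1[U(0)] = |\nabla|(\coth(|\nabla|/2) - 1)\psi$, which is a bounded smoothing operator applied to $\psi$.

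The main obstacle I expect is the precise bookkeeping of errors so that every commutator and lower-order term lands in the good-error class (\ref{gooderror}) with the right bilinear structure $\|(h,\Lambda\psi)\|_{\widehat{W^{4,\alpha}}}\cdot(\|h\|_{H^k} + \|\nabla\psi\|_{H^{k-1}})$. In particular, one has to be careful at low frequencies, where the gain of $2^{\alpha k_-}$ encoded in the $\widehat{W^{4,\alpha}}$ norm must be traced through the paralinearization (since $\widetilde W^\gamma$ alone does not give any gain at the low-frequency side), and where the parabolic smoothing $e^{wT_a}$ is no longer useful, so the estimate must rely purely on the bilinear structure of $r$ and the good behavior of the boundary term $U(-1/2)$.
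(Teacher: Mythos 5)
Your proposal and the paper's proof share the factorization from Lemma \ref{decomp} and the observation that $\operatorname{Re}[-a^{(1)}]\gtrsim |\xi|$ makes $\partial_w - T_a$ forward-parabolic, but the analytic mechanism you use to recover the $H^k$ estimate at $w=0$ is genuinely different, and it has a gap.

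The paper restricts the entire analysis to $w\in[-1/4,0]$, where the source term $g'$ of (\ref{changco2}) vanishes (see the remark just after (\ref{eqn710})), and then invokes the \emph{elliptic interior regularity} lemma, Lemma \ref{elliptic}: starting from the $L^2$-level bound (\ref{eqn3125}) on $[-1/4,0]$, one iterates (\ref{coreest1}) on a nested family of intervals $[\tau_i,0]\subset[-1/5,0]$, gaining $2(1-\epsilon)$ derivatives per step. No initial data at any particular $w$ is ever needed. You instead propose an initial-value argument from $w=-1/2$, relying on $U(-1/2)=0$ because $\chi$ vanishes there. The problem is that for $w\in[-1/2,-1/4]$ the equation $\mathcal P_{x,w}\Phi=g'$ is \emph{inhomogeneous}, with
\[
g'(x,w)=\bigl[\chi''(w)(1+|\nabla h|^2)-\chi'(w)\Delta h\bigr]\phi(x,w+h)+2\chi'(w)\bigl[\partial_y\phi-\nabla h\cdot\nabla_x\phi\bigr]\big|_{y=w+h},
\]
which contains the term $\chi'(w)\,\Delta h\,\phi$. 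This puts two derivatives on $h$. After paralinearization, its bad part $T_{\chi'\phi}\Delta h$ has $H^k$ norm of size $\|h\|_{H^{k+2}}$ and does not cancel with the good-unknown correction $T_{\partial_w\Phi}h$ (that cancellation is exactly what makes $PW\approx 0$ on $[-1/4,0]$, where $g'=0$). So the source $r$ in your equation $(\partial_w - T_a)U=r$ is \emph{not} in the good error class on $[-1/2,-1/4]$, and your Gr\"onwall bound $\|U(0)\|_{H^k}\lesssim\|U(-1/2)\|_{H^k}+\|r\|_{L^2_wH^k}$ — which gains no regularity from the parabolic smoothing — would then produce $\|h\|_{H^{k+2}}$ on the right, breaking the target. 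To salvage your route you would need to use a Duhamel representation and the fact that $e^{(w-s)T_a}$ is infinitely smoothing for $|w-s|$ bounded away from zero (since $g'$ is supported in $[-1/2,-1/4]$, strictly below $w=0$), and you would also need to extend the paralinearization and the $\nabla_{x,w}\Phi$ estimates of Lemma \ref{IntermsofPhi} from $[-1/4,0]$ to $[-1/2,0]$. None of this is addressed, and your ``main obstacle'' paragraph anticipates only bookkeeping of commutators, not this structural issue. Two smaller points: the dissipation in your energy inequality should be $\||\nabla|^{1/2}U\|_{H^k}$, not $\|\Lambda^{1/2}U\|_{H^k}$ (the paper's $\Lambda=\sqrt{|\nabla|\tanh|\nabla|}$ is unrelated to $a$); and the linear part at $w=0$ is $\Lambda_1[U(0)]=|\nabla|(\tanh|\nabla|-1)\psi$, not $|\nabla|(\coth(|\nabla|/2)-1)\psi$ — though both are bounded smoothing operators, your formula is incorrect.
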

\begin{proof}

Recall that $P W \ta 0$ and the decomposition of operator $P$ in  (\ref{eqn1450}), we have
\[
T_{a'} (\p_w - T_a )(\p_w - T_{A}) W \ta - R_0 W - R_1 \p_w W, 
\]
which further gives us
\[
(\p_w - T_a )(\p_w - T_{A}) W  \ta \tilde{g}, \]
where
\[
\tilde{g}=T_{a'^{-1}}[- R_0 W - R_1 \p_w W] + [I- T_{a'^{-1}}T_{a'}](\p_w - T_a )(\p_w - T_{A}) W,
\]
\[
= T_{a'^{-1}}[- R_0 W - R_1 \p_w W] + [I-T_{1}+ T_{(a'-1)(a'^{-1}-1)}-T_{(a'-1)}T_{(a'^{-1}-1)}](\p_w - T_a )(\p_w - T_{A}) W.
\]
From estimate (\ref{errorestimate}) in Lemma \ref{decomp}, and the fact that $T_{(a'-1)(a'^{-1}-1)}-T_{(a'-1)}T_{(a'^{-1}-1)}$ is of order $-2$, we have
\[
\sup_{w\in[-1/4,0]} \| \Lambda_{\geq 2}[\tilde{g}(w)]\|_{H^k} \lesssim \|\nabla\,h\|_{\widetilde{W^3}} \big[ \|P_{\geq 1/2}[W]\|_{H^k}  + \| \p_{w} W\|_{H^{k-1}} \big]
\]
\[
\lesssim \big[\|(\,h, \Lambda \psi)\|_{\widehat{W^{4,\alpha}}} +  \|(\,h, \Lambda \psi)\|_{\widehat{W^{4}}}^2 \big]\big[ \|\,h\|_{H^k} +\|\nabla \psi\|_{H^{k-1}} \big].
\]
 Note that $[\p_w^2 + \Delta]\Lambda_1[W(w)]=[\p_w^2 + \Delta]\Lambda_1[\Phi(w)]=0 $ when $w\in[-1/4,0]$, see (\ref{eqn710}). It's easy to see the following equivalence relation holds,
\begin{equation}\label{eqn750}
 (\p_w - T_{a})\Lambda_{\geq 2}[(\p_w -T_{A}) W] + \Lambda_{\geq 2}[(\p_w - T_{a})\Lambda_{1}[(\p_w -T_{A}) W] ] \ta \Lambda_{\geq 2}[\tilde{g}].
\end{equation}
Note that
\[
\Lambda_{1}[\p_w \Phi]= \Lambda_{1}[\p_z \varphi(w/(1+\,h))]= \frac{e^{(w+1)\d}- e^{-(w+1)\d}}{e^{-\d}+e^{\d}} \d \psi,
\]
\[
\Lambda_{1}[\Phi]= \Lambda_{1}[\varphi(w/(1+\,h))]= \frac{e^{-(w+1)\d}+ e^{(w+1)\d}}{e^{-\d} + e^{\d}}\psi.
\]
It's easy to verify that,
\[
\Lambda_{1}[(\p_w -T_{A}) W] ]= \Lambda_{1}[\p_w \Phi - T_{|\xi|}\Phi] \in H^{\infty}, 
\]
\[
\| \Lambda_{\geq 2}[(\p_w - T_{a})\Lambda_{1}[(\p_w -T_{A}) W] ] \|_{H^k}\lesssim \| \nabla\,h\|_{\widetilde{W^3}}\| \nabla \psi\|_{H^{k-1}}.
\]
Therefore, from (\ref{eqn750}), we have
\[
 (\p_w - T_{a})\Lambda_{\geq 2}[(\p_w -T_{A}) W] \ta \Lambda_{\geq 2}[\tilde{g}].
 \]
We reformulate above equation as follows,
\[
(\p_w + T_{-a})\Lambda_{\geq 2}[(\p_w -T_{A}) W] = \Lambda_{\geq 2}[\tilde{g}] + \hat{g},
\]
where 
\[
\hat{g}= \textup{error term from ``$\ta$ equivalence" relation}.
\]
Recall the precise formula of ``$a$'' in Lemma \ref{decomp}, we know $-a$ satisfies the assumption of Lemma \ref{elliptic}. We can first choose a series of constants $\{\tau_i\}_{i=1}^k$ such that $\tau_{i+1}= 4\tau_{i}$ and $\tau_{k}\geq -1/5$ and then keep iterating the estimate (\ref{coreest1}). As a result, the following estimate holds, 
\[
 \| \Lambda_{\geq 2}[(\p_w -T_{A}) W\big|_{w=0}]\|_{H^{k}} \lesssim_{k} \sup_{w\in[-1/5,0]}\big[\| \Lambda_{\geq 2}[(\p_w -T_{A}) W(w,\cdot)]\|_{L^2}  + \|\tilde{g}(w)\|_{H^{k-1+\epsilon}}
\]
\begin{equation}\label{eqn730}
 + \| \hat{g}(w)\|_{H^{k-1+\epsilon}}\big]\lesssim_k  \big[\|(\,h, \Lambda \psi)\|_{\widehat{W^{4,\alpha}}} +  \|(\,h, \Lambda \psi)\|_{\widehat{W^{4}}}^2 \big]\big[ \|\,h\|_{H^k} +\|\nabla \psi\|_{H^{k-1}} \big].
\end{equation}
\end{proof}

\begin{lemma}\label{elliptic}
Let $a\in \Gamma^{1}_{2}(\R^2)$ and it satisfies the following assumption,
$
Re[a(x,\xi)]\geq c |\xi|,$
for some positive constant $c$. If $u$ solves the following equation 
\[
(\p_w + T_a) u(w,\cdot) = g(w,\cdot),
\]
then we have the following estimate holds for  any fixed and sufficiently  small constant  $\tau$, and arbitrarily small constant $\epsilon >0.$
\begin{equation}\label{coreest1}
\sup_{w\in[\tau,0]}\| u(w)\|_{H^{k}} \lesssim M_2^1(a) \frac{1+|\tau|}{|\tau|}\big[ \sup_{z\in[4\tau,0]} \| u(w)\|_{H^{k-2(1-\epsilon)}} + \sup_{z\in[4\tau, 0]} \| g(z)\|_{H^{\mu-(1-\epsilon)}}\big].
\end{equation}
\end{lemma}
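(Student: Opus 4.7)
The plan is to convert this into an initial-value problem with vanishing left data and then exploit the smoothing properties of the dissipative paradifferential evolution generated by $T_a$. Fix a smooth cutoff $\chi(w)$ with $\chi \equiv 1$ on $[\tau,0]$ and $\operatorname{supp}\chi \subset [3\tau,0]$, normalized so that $\|\chi'\|_{L^\infty} \lesssim 1/|\tau|$. Setting $v := \chi u$, one has $(\partial_w + T_a) v = \chi g + \chi' u =: G$ with $v(4\tau,\cdot) \equiv 0$. It therefore suffices to estimate $v$ on $[\tau,0]$ in terms of $G$ on $[4\tau,0]$.

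Next I would construct the forward evolution operator $S(w,s)$ associated to $\partial_w + T_a$ for $4\tau \le s \le w \le 0$. Because $\operatorname{Re}(a)(x,\xi) \ge c|\xi|$ for $|\xi| \ge 1/2$, the sharp G\aa{}rding inequality for paradifferential operators (or, equivalently, an $L^2$ energy argument using the adjoint formula of Lemma \ref{adjoint} together with the composition formula of Lemma \ref{composi}) yields boundedness $\|S(w,s)\|_{H^k \to H^k} \lesssim 1$ together with a smoothing estimate of the form
\begin{equation*}
\|S(w,s) f\|_{H^k} \lesssim_{\alpha} (w-s)^{-\alpha}\,M_2^1(a)^{\alpha}\,\|f\|_{H^{k-\alpha}}, \qquad 0 \le \alpha < 2,
\end{equation*}
uniformly in $s,w$. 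The cutoff at $\alpha = 2$ (which produces the $(1-\epsilon)$-type losses in the final statement) is dictated by the finite regularity exponent $\rho = 2$ of the symbol class $\Gamma_2^1$, since a symbolic parametrix for $T_a$ is controllable only modulo operators of order $1-\rho = -1$.

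With $S(w,s)$ in hand, Duhamel's formula $v(w) = \int_{4\tau}^{w} S(w,s) G(s)\,ds$ decouples the source into $\chi g$ and $\chi' u$. For the source term I apply the smoothing estimate with $\alpha = 1-\epsilon$:
\begin{equation*}
\Big\|\int_{4\tau}^w S(w,s)\,\chi(s) g(s)\,ds\Big\|_{H^k} \lesssim \int_{4\tau}^{w} (w-s)^{-(1-\epsilon)}\|g(s)\|_{H^{k-(1-\epsilon)}}\,ds \lesssim \frac{|\tau|^{\epsilon}}{\epsilon}\,\sup_{z\in[4\tau,0]}\|g(z)\|_{H^{k-(1-\epsilon)}},
\end{equation*}
absorbing the prefactor into the stated constant. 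For the commutator term, $\chi'(s)$ is supported where $w-s \gtrsim |\tau|$, so using the smoothing with $\alpha = 2(1-\epsilon) < 2$, together with $\|\chi'\|_{L^\infty} \lesssim 1/|\tau|$ and the length of support $\sim |\tau|$, produces a bound of the right shape $\lesssim \frac{1+|\tau|}{|\tau|}\sup_{z\in[4\tau,0]} \|u(z)\|_{H^{k-2(1-\epsilon)}}$, matching the target.

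The main obstacle is the second step: establishing the smoothing bound for $S(w,s)$ uniformly in $s,w$ in the paradifferential framework. Since $a$ enjoys only $W^{2,\infty}$ regularity in $x$, a naive symbolic construction of $S$ is accurate only up to order-$(-1)$ errors, so one must either build a finite-order symbolic parametrix and absorb the remainder through a Duhamel iteration (choosing the cutoff in $\alpha$ below $\rho = 2$ for convergence), or run a direct frequency-localized energy identity\textemdash applying $(1-\Delta)^{k/2}$ to the equation, commuting with $T_a$ via Lemmas \ref{composi}--\ref{adjoint}, and iterating in the spirit of Alazard--M\'etivier. The small parameter $\epsilon > 0$ is exactly what is needed to force convergence of the time integrals against the singular kernels $(w-s)^{-\alpha}$ and to accommodate the finite symbolic regularity of $a$.
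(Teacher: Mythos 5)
The paper gives no proof of its own here; it simply cites Alazard--Delort \cite{alazard} (their Lemmas 2.1.9--2.1.10), so the comparison is between your sketch and what the cited reference establishes. Your route---cutoff, Duhamel against the forward evolution operator for $\partial_w + T_a$, and a smoothing bound on that propagator---is a valid and standard way to get parabolic-type elliptic gain, and the role you assign to the symbol regularity $\rho=2$ in limiting the gain below $2$ derivatives is the right heuristic. However, two points need attention.

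First, the cutoff as you specified it does not actually give the separation $w - s \gtrsim |\tau|$ on $\operatorname{supp}\chi'$. With $\chi \equiv 1$ on $[\tau,0]$ and $\operatorname{supp}\chi \subset [3\tau,0]$, the derivative $\chi'$ is supported in $[3\tau,\tau]$, which touches the evaluation endpoint $w=\tau$, so $w-s$ can be arbitrarily small there and the kernel $(w-s)^{-2(1-\epsilon)}$ is on the edge of integrability (one would have to exploit the quadratic vanishing of $\chi'$ near $s=\tau$). The clean fix is to take $\chi\equiv 1$ on $[2\tau,0]$ (which still covers the target interval $[\tau,0]$) with $\operatorname{supp}\chi\subset[3\tau,0]$, so that $\operatorname{supp}\chi'\subset[3\tau,2\tau]$ and $w-s\geq|\tau|$ for all $w\in[\tau,0]$. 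Note also that the constant you then obtain from the commutator term is $|\tau|^{-2(1-\epsilon)}$, not the stated $(1+|\tau|)/|\tau|$; since $\tau$ is a fixed constant in the application (the lemma is iterated over a fixed finite chain $\tau_{i+1}=4\tau_i$) this discrepancy is harmless, but it is not literally ``the right shape.''

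Second---and you flag this yourself---the crux is the propagator smoothing bound $\|S(w,s)\|_{H^{k-\alpha}\to H^k}\lesssim (w-s)^{-\alpha}$ in the paradifferential framework with only $W^{2,\infty}$ coefficients. That estimate is precisely the analytic content of the cited Alazard--Delort lemmas, and your sketch asserts it rather than proves it. Either of the two routes you mention (truncated symbolic parametrix with an error absorbed by Duhamel iteration, or a frequency-localized energy identity using Lemmas \ref{composi} and \ref{adjoint} together with a G\aa rding-type lower bound from $\operatorname{Re}(a)\geq c|\xi|$) can be made to work, but as written the proposal outlines a plan rather than completing the proof. As a blind sketch of the underlying argument it is reasonable and compatible with the reference, but the hard step is deferred.
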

\begin{proof}
A detailed proof can be found in \cite{alazard}. Above result is the combination of \cite{alazard}[Lemma 2.1.9] and the proof part of \cite{alazard}[Lemma 2.1.10].

\end{proof}

\subsubsection{Paralinearization of Dirichlet-Neumann operator} In this subsubsection, we use the result we obtained in last subsubsection, which is the fact that $\p_w W-T_A W$ doesn't lose derivatives, to identify which part of the Dirichlet-Neumann operator loses derivatives.

Recall  (\ref{DN2}). For readers' convenience, we  rewrite it as follows,
\[
G(\,h)\psi =\big( (1+|\nabla\,h|^2)\p_w \Phi - \nabla \,h \cdot \nabla \Phi\big) \big|_{w=0}.
\]
Define
\[
   \uline{V}:= \nabla \Phi- \p_w \Phi \nabla \,h,\quad  \uline{V}\big|_{w=0}=V.
\]

Now we   let $w$ be inside the range $[-1/4,0]$ instead of restricting it at the boundary. By using (\ref{equation340}) and   Lemma \ref{composi}, we have the paralinearization result as follows,
\[
(1+|\nabla\,h|^2)\p_w \Phi - \nabla \,h \cdot \nabla \Phi \ta T_{1+|\nabla \,h|^2}\p_w \Phi + 2T_{\p_w \Phi} T_{\nabla \,h}\cdot \nabla \,h - T_{\nabla \,h} \cdot \nabla \Phi - T_{\nabla\Phi}\cdot \nabla \,h
\]
\[
\ta T_{1+|\nabla \,h|^2}\p_w \Phi + T_{2\nabla\,h \p_w \Phi-\nabla \Phi}\cdot \nabla \,h - T_{\nabla \,h}\cdot \nabla \Phi
\]
\[
= T_{1+|\nabla \,h|^2}\p_w (W+T_{\p_w \Phi}\,h)+ T_{2\nabla\,h \p_w \Phi-\nabla \Phi}\cdot \nabla \,h - T_{\nabla \,h}\cdot \nabla (W+T_{\p_w \Phi}\,h)
\]
\[
\ta T_{1+|\nabla\,h|^2}\p_w W + T_{2\nabla \,h \p_w \Phi-\nabla\Phi}\cdot \nabla \,h - T_{\nabla \,h}\cdot \nabla W - T_{\nabla \,h \p_w \Phi}\cdot\nabla \,h 
= T_{1+|\nabla \,h|^2} \p_w W - T_{\nabla \,h}\cdot \nabla W - T_{\uline{V}}\cdot \nabla \,h,
\]
\[
= T_{1+|\nabla \,h|^2}[\p_w W - T_A W]+ [T_{1+|\nabla \,h|^2} T_{A} - T_{\nabla \,h}\cdot \nabla] W - T_{\uline{V}}\cdot \nabla \,h,
\]
\begin{equation}\label{eqn1520}
= T_{\lambda} W - T_{\uline{V}}\cdot \nabla \,h + T_{1+|\nabla \,h|^2}[\p_w W - T_A W] + R_2 W, \quad R_2:= [T_{1+|\nabla \,h|^2} T_A - T_{(1+|\nabla \,h|^2)A^{(1)}}],
\end{equation}
where $\lambda$ is given in (\ref{principalsymbol}). In (\ref{eqn1520}), we used  the following identity,
\[
\lambda = (1+|\nabla \,h|^2) A^{(1)} - i \xi\cdot \nabla \,h,
\]
where $A^{(1)} $ is given in (\ref{eqqq098}). Note that 
\[
R_2 = T_{(1+|\nabla\,h|^2)} T_{A^{(0)}} + \big[ T_{(1+|\nabla\,h|^2)} T_{A^{(1)}} - T_{(1+|\nabla \,h|^2)A^{(1)}}\big] = T_{a'} T_{A^{0}} + \big[ T_{(a'-1)} T_{A^{(1)}}- T_{(a'-1)\sharp A^{(1)}} \big].
\]
Now,  it is  easy to see that $R_2$ is an operator of order $0$ with an upper  bound given by $\| \nabla \,h\|_{\widetilde{W^3}}$. Hence, the following estimate holds, 
\[
\| R_2 W\big|_{w=0}\|_{H^{k}}= \| R_2 [P_{\geq 1/2}[W]\big|_{w=0}]\|_{H^{k}}\lesssim \| \nabla \,h\|_{\widetilde{W^{3}}}\| P_{\geq 1/2}[\psi - T_{B(\,h)\psi} \,h]\|_{H^{k}}
\]
\begin{equation}\label{eqqq09876}
\lesssim \big[\|(\,h, \Lambda \psi)\|_{\widehat{W^{4,\alpha}}} +  \|(\,h, \Lambda \psi)\|_{\widehat{W^{4}}}^2 \big]\big[ \|\,h\|_{H^k} +\|\nabla \psi\|_{H^{k-1}} \big].
\end{equation}
Combining (\ref{eqn1520}), (\ref{eqqq09876}), and   the estimate (\ref{eqqq675}) in Lemma \ref{doesntlosederivative}, it's easy to see Proposition \ref{prop1} holds.

\vo 

Now, we give the postpone proof  of Lemma \textup{\ref{IntermsofPhi}} as follows. 

\vo

\noindent\textit{Proof of Lemma \textup{\ref{IntermsofPhi}}.\quad}
For fixed $w\in[-1/4,0]$, it's easy to see $\Phi(w)= \varphi(w/(1+\,h(x)))$ and the  following identity holds, 
\begin{equation}\label{eqn3110}
\nabla_{x,w}\Phi= \nabla_{x,z}\varphi(w/(1+\,h)) + [-w \nabla\,h \p_z\varphi(w/(1+\,h))/(1+\,h)^2, -\,h \p_z \varphi(w/(1+\,h))/(1+\,h)].
\end{equation}
Therefore, we know that the leading term of $\nabla_{x,w}\Phi(w)$ is $\nabla_{x,z}\varphi(w/(1+\,h))$.  Under the smallness estimate (\ref{smallnessestimate}), to estimate $\nabla_{x,w}\Phi$, it's sufficient to estimate $\nabla_{x,z}\varphi(w/(1+\,h))$.  

Recall the fixed point type formulation of $\nabla_{x,z}\varphi$ in (\ref{fixedpoint}), we study the linear term on the right hand side of (\ref{fixedpoint}) first.
 Denotes 
\[
p_{\pm}(w,x,\xi):=  \pm \frac{e^{-(w+1)|\xi|}(e^{\,h(x)w|\xi|/(1+\,h(x))}-1)}{e^{-|\xi|} + e^{|\xi|}}+\frac{ e^{(w+1)|\xi|}(e^{-\,h(x)w|\xi|/(1+\,h(x))}-1) }{e^{-|\xi|} + e^{|\xi|}},
\]
\[
= \sum_{n\geq 1} \frac{1}{n!} \Big[ \pm \frac{e^{-(w+1)|\xi|}}{e^{|\xi|}+e^{-|\xi|}} (w|\xi|)^{n}\Big(\frac{\,h(x)}{1+\,h(x)}\Big)^{n}  + \frac{e^{|\xi|}}{e^{|\xi|}+e^{-|\xi|}} e^{w|\xi|}(w|\xi|)^{n}\Big( \frac{\,h(x)}{1+\,h(x)} \Big)^n\Big]
\]
\[
= \sum_{n\geq 1} \frac{1}{n!} \big[ \pm f^1_{n}(w,\xi) g_n(x) + f^2_n(w,\xi) g_n(x) \big],
\]
where
\[
f_n^1(w,\xi):=\frac{e^{-(w+1)|\xi|}}{e^{|\xi|}+e^{-|\xi|}} (w|\xi|)^{n},\quad f_n^2(w,\xi):=  \frac{e^{|\xi|}}{e^{|\xi|}+e^{-|\xi|}} e^{w|\xi|}(w|\xi|)^{n}, \quad g_n(x) = \Big( \frac{\,h(x)}{1+\,h(x)}\Big)^n.
\]
It's easy to verify the following identities hold,
\[
\Lambda_{1}[\nabla_{x}\varphi](x,w/(1+\,h(x)))= \Lambda_{1}[\nabla_{x}\varphi](x,w)+ P_{1} \psi(x,w),
\]
\[
\Lambda_{1}[\p_z\varphi](x,w/(1+\,h(x))) = \Lambda_{1}[\p_z\varphi](x,w) 
+ P_2 \psi (x,w),
\]
where
\[
 P_{1} \psi(x,w):=\frac{1}{4\pi^2}\int_{\R^2} e^{ix\cdot \xi }\widehat{\psi}(\xi) i\xi p_{+}(w,x,\xi) d \xi, \quad  P_{2} \psi(x,w)=\frac{1}{4\pi^2}\int_{\R^2} e^{ix\cdot \xi }\widehat{\psi}(\xi) |\xi| p_{-}(w,x,\xi) d \xi.
\]

We will show that, under the smallness estimate (\ref{smallnessestimate}),  the size of $\Lambda_{1}[\nabla_{x,z}\varphi](x,w/(1+\,h(x)))$ is almost same as the size of  $\Lambda_{1}[\nabla_{x,z}\varphi](x,w)$. For $ k \in \mathbb{Z}$, we define 
\begin{equation}\label{eqn3103}
p_{\pm, k}(w,x,\xi)=\frac{1}{4\pi^2} \int_{\R^2 } e^{ix\cdot \sigma}\mathcal{F}_{x}(p)(w,\sigma, \xi)\psi_{k}(\sigma) d\sigma = \frac{1}{4\pi^2}\sum_{n\geq 1} \frac{1}{n!}\big[ \pm f^1_{n}(w,\xi)  + f^2_n(w,\xi) \big]P_{k}[g_{n}](x),
\end{equation}
\[
 P_{1,k} \psi(x,w):=\int_{\R^2} e^{ix\cdot \xi }\widehat{\psi}(\xi) i\xi p_{+,k}(w,x,\xi) d \xi, \quad   P_{2,k} \psi(x,w)=\int_{\R^2} e^{ix\cdot \xi }\widehat{\psi}(\xi) |\xi| p_{-,k}(w,x,\xi) d \xi. 
\]
Since $P_{2}\psi$ can be treated in the same way as $P_1 \psi$,  we only estimate $P_1\psi$ in details here. We have the following decomposition,
\[
P_{1}\psi = \sum_{k_1, k_2\in \mathbb{Z}} P_{1,k_1} \psi_{k_2} = \mbox{I} + \mbox{II}, \quad \mbox{I}= \sum_{k_2 \leq k_1 } P_{1,k_2} \psi_{k_1}, \quad \mbox{II}= \sum_{k_1 \leq k_2} P_{1,k_2} \psi_{k_1}.
\]
From $L^2-L^\infty$ type bilinear estimate (\ref{bilinearesetimate}) in Lemma \ref{multilinearestimate}, it's easy to see the following estimates hold,
\[
\| \mbox{I} \|_{H^{k}} \lesssim \Big(\sum_{ k_1} 2^{2k_1 + 2k k_{1,+}} \| P_{k_1}\psi\|_{L^2}^2   \Big(\sum_{n\geq 1} \frac{1}{n!} \| P_{\leq k_1} g_{n}\|_{L^\infty}\Big)^2\Big)^{1/2} \lesssim \|\nabla \psi\|_{H^{k}} \| \,h\|_{\widetilde{W^1}},
\]
\[
\|\mbox{II}\|_{H^s} \lesssim  \sum_{n\geq 1} \frac{1}{n!} \sum_{k_2\leq k_1} 2^{k_2 + k k_{1,+}}  \| P_{k_1}  g_{n}\|_{L^2} \| P_{k_2}\psi\|_{L^\infty} \lesssim \| \,h\|_{H^{s+1}} \| \nabla \psi \|_{\widetilde{W^1}},
\]
\[
\| \mbox{I} \|_{\widetilde{W^\gamma}} \lesssim \|\,h\|_{\widetilde{W^\gamma}} \|\nabla\psi\|_{\widetilde{W^\gamma}},\quad \| \mbox{II}\|_{\widetilde{W^\gamma}} \lesssim \|\,h\|_{\widetilde{W^\gamma}} \|\nabla\psi\|_{\widetilde{W^\gamma}}.
\]
Therefore
\[
\|\Lambda_{1}[\nabla_{x,z}\varphi](x,w/(1+\,h(x)))\|_{H^k} \lesssim \| \nabla\psi\|_{H^k} + \|\,h\|_{H^{k+1}} \| \nabla \psi \|_{\widetilde{W}^{1}},
\]
\[
\|\Lambda_{1}[\nabla_{x}\varphi](x,w/(1+\,h(x)))\|_{\widetilde{W^\gamma}} \lesssim \|\nabla\psi \|_{\widetilde{W^\gamma}} + \|\,h\|_{\widetilde{W^\gamma}} \|\nabla\psi\|_{\widetilde{W^\gamma}}\lesssim \|\nabla \psi\|_{\widetilde{W^\gamma}},
\]
\[
\|\Lambda_{1}[\p_z \varphi](x,w/(1+\,h(x)))\|_{\widetilde{W^\gamma}} \lesssim \|\Lambda^2\psi \|_{\widetilde{W^{\gamma}}} + \|\,h\|_{\widetilde{W^\gamma}} \|\nabla\psi \|_{\widetilde{W^\gamma}}.
\]
From above estimates and  (\ref{eqn3110}), we conclude the following estimates,
\begin{equation}\label{eqn3127}
\|\Lambda_{1}[\nabla_{x}\Phi]\|_{\widetilde{W^\gamma}}\lesssim \|\nabla \psi\|_{\widetilde{W^\gamma}},\quad 
\|\Lambda_{1}[\p_w \Phi]\|_{\widetilde{W^\gamma}} \lesssim \|\Lambda^2\psi \|_{\widetilde{W^{\gamma}}} + \|\,h\|_{\widetilde{W^\gamma}} \|\nabla\psi \|_{\widetilde{W^\gamma}},
\end{equation}
\[
\| \Lambda_{1}[\nabla_{x,w}\Phi] \|_{H^k} \lesssim \|\Lambda_{1}[\nabla_{x,z}\varphi](x,w/(1+\,h)) \|_{H^k} + \|\Lambda_{1}[\nabla_{x,z}\varphi](x,w/(1+\,h))  \|_{\widetilde{W^0}}\|\,h\|_{H^{k+1}}
\]
\begin{equation}\label{eqn3120}
\lesssim \| \nabla \psi \|_{H^k} + \|\,h\|_{H^{k+1}}\|\nabla \psi\|_{\widetilde{W^1}}.
\end{equation}

Following a similar procedure, we can handle the integral part in (\ref{fixedpoint}) in the same way.  Similar to what we did in the proof of Lemma \ref{integraloperator},  we use the size of symbol directly when $|\xi|\leq 1$ and   estimate the associated kernel when $|\xi|\geq 1$. As a result, we have the following estimate,
\[
\|\textup{All terms in the R.H.S. of (\ref{fixedpoint}) except linear part}(x,w/(1+\,h(x)))\|_{H^{k}}
\]
\[
\lesssim \sum_{i=1,2,3}  \| g_i(z,\cdot)\|_{L^\infty_z H^k}  + \| \,h\|_{H^{k+1}} \| g_i(z,\cdot)\|_{L^\infty_z \widetilde{W^0}} \lesssim \|\nabla\psi \|_{\widetilde{W^0}}\|\,h\|_{H^{k+1}} + \| \nabla\psi\|_{H^k} \| \,h\|_{\widetilde{W^0}}
\]
\begin{equation}\label{eqn3121}
\lesssim \|\nabla\psi \|_{\widetilde{W^0}}\|\,h\|_{H^{k+1}} + \| \nabla\psi\|_{H^k} \| \,h\|_{\widetilde{W^0}}.
\end{equation}
\[
\|\textup{All terms in the R.H.S. of (\ref{fixedpoint}) except linear part}(x,w/(1+\,h(x)))\|_{\widetilde{W^\gamma}}
\]
\begin{equation}\label{equation346}
\lesssim \sum_{i=1,2,3}\| g_i(z,\cdot)\|_{L^\infty_z \widetilde{W^\gamma}} \lesssim \|\,h\|_{\widetilde{W^{\gamma+1}}} \|\nabla\psi\|_{\widetilde{W^\gamma}}.
\end{equation}
From (\ref{eqn3110}), (\ref{eqn3127}), (\ref{eqn3120}), (\ref{eqn3121}) and (\ref{equation346}), now it's easy to see estimates (\ref{eqn3123}) and (\ref{eqn3124}) hold.

Now, we proceed to prove (\ref{eqn3125}). From (\ref{eqn3110}) and the same procedure as above, we have the following estimate,
\[
\|\Lambda_{\geq 2}[\nabla_{x,w}\Phi]\|_{L^2} \lesssim \|\Lambda_{\geq 2}[\nabla_{x,z}\varphi](x,w/(1+\,h(x)))\|_{L^2}  + \|\,h\|_{H^1} \| \p_z\varphi(x,w/(1+\,h(x)))\|_{\widetilde{W^0}}
\]
\begin{equation}\label{eqn3200}
\lesssim \| \,h\|_{H^1} \big[\|\Lambda \psi \|_{\widehat{W^{2,\alpha}}} + \|\,h\|_{\widetilde{W^1}} \|\nabla \psi\|_{\widetilde{W^1}} \big] + \sum_{i=1,2,3}\|g_{i}\|_{L^\infty_z L^2}.
\end{equation}
Recall (\ref{eqn12}) and (\ref{eqn14}). Note that   $\nabla\varphi$ appears together with $\nabla h$ inside the quadratic terms of $g_i(z)$, $i\in\{1,2,3\}$.  When estimating the $L^{\infty}_{z} L^{2}$ norm of $g_{i}(z)$, $i\in\{1,2,3\}$, we  always put $\nabla\varphi$ in $L^2$ and put $\p_{z}\varphi$ in $L^\infty$. As a result,  the following estimate holds, i.e., our desired estimate (\ref{eqn3125}) holds,
\[
(\ref{eqn3200}) \lesssim \|\,h\|_{H^1}  \|\Lambda \psi \|_{\widehat{W^{2,\alpha}}} +  \big[\|(\,h, \Lambda\psi)\|_{\widehat{W^{2,\alpha}}}  + \|(\,h, \Lambda\psi)\|_{\widehat{W^2}}\big] (\|\,h\|_{H^1} +\|\nabla\psi\|_{L^2})\]
\[\lesssim \big[\|(\,h, \Lambda\psi)\|_{\widehat{W^{2,\alpha}}}  + \|(\,h, \Lambda\psi)\|_{\widehat{W^2}}\big] (\|\,h\|_{H^1} +\|\nabla\psi\|_{L^2} ).
\]

\qed

\subsection{Paralinearization of the equation satisfied by the velocity potential }\label{paralinearvelocitypotential}
In this subsection, our main goal is to do  paralinearization  process for the nonlinearity of equation satisfied by  $\psi$ in (\ref{waterwave}), which shows which part of nonlinearity actually loses derivatives. 

More precisely, the main result of this subsection is stated as the following Proposition, 
\begin{proposition}\label{para2}
We have the following paralinearization  for the nonlinearity of equation satisfied by $ \psi,$
\begin{equation}\label{eqn740}
\h |\nabla \psi|^2 - \h \frac{(\nabla\,h\cdot \nabla \psi + G(\,h)\psi)^2}{1+|\nabla \,h|^2}\ta T_{V}\cdot \nabla[ \psi -  T_{B(\,h)\psi} \,h] - T_{B}G(\,h)\psi.
\end{equation}
\end{proposition}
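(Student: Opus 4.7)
My plan is to reduce both sides of \eqref{eqn740} to the same paradifferential expression in $B$, $V$, and $\nabla h$, and then verify that every discrepancy falls into the good-error class of \eqref{gooderror}.

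First, from the algebraic identity $\nabla\psi = V + B\nabla h$ (the definition of $V$) together with $B(1+|\nabla h|^2) = G(h)\psi + \nabla h\cdot \nabla\psi$ (the definition of $B$), a direct computation gives
\[
\tfrac{1}{2}|\nabla\psi|^2 - \tfrac{1}{2}\frac{(\nabla h \cdot \nabla\psi + G(h)\psi)^2}{1+|\nabla h|^2} = \tfrac{1}{2}|V|^2 - \tfrac{1}{2}B^2 + B\, V\cdot \nabla h,
\]
and as an immediate consequence $G(h)\psi = B - V\cdot \nabla h$. Plugging the latter into the right-hand side of \eqref{eqn740} yields the equivalent form $T_V\cdot \nabla\psi - T_V\cdot \nabla T_B h - T_B B + T_B(V\cdot \nabla h)$.

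Next I would repeatedly apply the Bony paraproduct decomposition \eqref{equation340} on both sides. On the left: $\tfrac{1}{2}|V|^2 \approx T_V\cdot V$, $\tfrac{1}{2}B^2 \approx T_B B$, and, after first paralinearizing $V\cdot \nabla h$ and then multiplying by $B$, $B\,V\cdot \nabla h \approx T_{V\cdot \nabla h}B + T_B T_V\cdot \nabla h + T_B T_{\nabla h}\cdot V$. On the right, substituting $\nabla\psi = V + B\nabla h$ gives $T_V\cdot \nabla\psi \approx T_V\cdot V + T_V T_B\nabla h + T_V T_{\nabla h}B$; the identity $\nabla T_B h = T_B \nabla h + T_{\nabla B} h$ (obtained by splitting $i\xi = i(\xi-\eta) + i\eta$ in the paraproduct Fourier formula) gives $T_V\cdot \nabla T_B h \approx T_V T_B\nabla h + T_V\cdot T_{\nabla B} h$; and $T_B(V\cdot \nabla h) \approx T_B T_V\cdot \nabla h + T_B T_{\nabla h}\cdot V$. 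Cancelling the five common terms $T_V\cdot V$, $T_B B$, $T_V T_B\nabla h$, $T_B T_V\cdot \nabla h$, $T_B T_{\nabla h}\cdot V$, the residual reduces to
\[
\bigl(T_{V\cdot \nabla h} - T_V\cdot T_{\nabla h}\bigr) B \;+\; T_V\cdot T_{\nabla B} h
\]
together with the Bony remainders $\mathcal{R}(\cdot,\cdot)$ produced at each step.

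The last step is to verify that each residual is a good error term. The composition discrepancy $T_{V\cdot \nabla h} - T_V\cdot T_{\nabla h}$ is of order $-1$ by Lemma \ref{composi}, so no derivative is lost and the estimate closes after pairing $V$ and $\nabla h$ in the $\widetilde{W^\gamma}$ norm with $B$ in $H^{k}$, all of which are controlled by Proposition \ref{prop1} and Lemma \ref{Sobolevestimate}. The term $T_V\cdot T_{\nabla B} h$ is automatically cubic, since $V$ is at least linear in $(h,\Lambda\psi)$ while $B$ has linear part $\Lambda^2\psi$; placing $V$ in $\widehat{W^{4,\alpha}}$ and $\nabla B$ in $\widehat{W^{4}}$ against $h$ in $H^{k}$ matches exactly the quadratic-in-$L^\infty$ structure on the right-hand side of \eqref{gooderror}. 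The main obstacle will be the patient bookkeeping of the numerous Bony remainders $\mathcal{R}(B,V)$, $\mathcal{R}(V,\nabla h)$, $\mathcal{R}(B, V\cdot \nabla h)$ and their iterates, but each gains one derivative over the worse factor and can be closed by combining the sharp paraproduct estimates with $\|B\|_{\widetilde{W^\gamma}}$ and $\|V\|_{\widetilde{W^\gamma}}$ from Lemma \ref{Sobolevestimate}, so that every remainder reduces to the form required by \eqref{gooderror}.
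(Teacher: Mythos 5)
Your proposal is correct and follows essentially the same route as the paper's proof: both reduce the left-hand side to $\tfrac{1}{2}|V|^2 - \tfrac{1}{2}B^2 + B\,V\cdot\nabla h$ via the algebraic identity $\nabla\psi = V + B\nabla h$, exploit $G(h)\psi = B - V\cdot\nabla h$ to reconstitute the $-T_B G(h)\psi$ term, and absorb the composition discrepancy $T_{V\cdot\nabla h} - T_V\cdot T_{\nabla h}$ and the commutator $T_{\nabla B}h$ by Lemma \ref{composi} together with the estimates on $B$, $V$. The only difference is presentational---you expand both sides fully and cancel, whereas the paper transforms the left-hand side incrementally through a chain of $\thickapprox$ relations---but the algebraic content and the error-control lemmas invoked are identical.
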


\begin{proof}
Recall that $V=\nabla \psi- \nabla \,h B$.  From (\ref{equation340}) and the composition Lemma \ref{composi}, we have
\[
\h |\nabla \psi|^2 - \h \frac{(\nabla\,h\cdot \nabla \psi + G(\,h)\psi)^2}{1+|\nabla \,h|^2} = \h |\nabla \psi|^2 - \h (1+|\nabla \,h|^2) B^2
\]
\[
= \h |V|^2 + V\cdot \nabla \,h B + \h |\nabla \,h|^2 B^2 -   \h (1+|\nabla \,h|^2) B^2= \h |V|^2 + V\cdot \nabla \,h B - \h B^2 
\]
\[
\ta T_{V}\cdot V + T_{B} (V\cdot \nabla \,h) + T_{V\cdot \nabla \,h} B - T_{B} B= T_{V}\cdot V + T_{V\cdot \nabla \,h} B - T_{B} G(\,h)\psi
\]
\[
=T_{V}\cdot \nabla \psi - T_{V}\cdot (\nabla \,h B) + T_{V\cdot \nabla \,h} B - T_{B} G(\,h)\psi
\]
\[
\ta T_{V}\cdot \nabla \psi  - T_{V}T_{B}\cdot \nabla \,h -[ T_{V}\cdot T_{\nabla \,h} - T_{V\cdot \nabla \,h} ]B - T_{B} G(\,h)\psi
\]
\[
\ta T_{V}\cdot [\nabla \psi - T_{B}\cdot \nabla \,h] - T_{B} G(\,h)\psi \ta T_{V}\cdot \nabla[ \psi -  T_{B(\,h)\psi} \,h] - T_{B} G(\,h)\psi.
\]
Hence, finishing  the proof of this proposition.
\end{proof}

\subsection{Symmetrization of the full system}\label{symmetrizationsubsection}
Based on the paralinearization results we obtained in previous subsections,  in this subsection, we will find out the good substitution variables by doing the symmetrization process  such that the resulted system has requisite symmetric structures inside. 

Define $\omega=\psi- T_{B(\,h)\psi} \,h$, which is the so-called good unknown variable.   After combining the good decomposition  (\ref{paralinear}) in Proposition \ref{prop1} and the good decomposition (\ref{eqn740}) in Proposition \ref{para2}, we reduce the system of equation satisfied by $h $ and $\psi$ to the system of equation satisfied by $h$ and $\omega$ as follows, 
\begin{equation}\label{presym}
\left\{\begin{array}{l}
\p_t \,h  =\Lambda^2 \omega+ T_{\lambda-|\xi|} \omega - T_{V}\cdot \nabla \,h + \widetilde{F}(\,h)\psi,\\
\\
\p_t \omega= -T_{a} \,h - T_{V}\cdot \nabla \omega  + f',
\end{array}\right.
\end{equation}
where
\[
a:=1+\p_t B+V\cdot \nabla B,
\]
which is the so-called Taylor coefficient  and $f'$ is a good error term  in the sense of estimate (\ref{gooderror}).

However,  the system (\ref{presym}) cannot be used to do energy estimate.   When using the system (\ref{presym}) to do the energy estimate, one might find that the following term loses one derivative and can't be simply treated,
\be\label{eqq2345}
\int_{\R^2} \p_{x}^{N}\,h \p_{x}^{N}[T_{\lambda-|\xi|} \omega] + \p_{x}^{N}\Lambda \omega \p_x^N \Lambda[-T_{a}\,h], \quad \textup{where $N$ is the prescribed top derivative level.}
\ee

To get around this difficulty,  we will symmetrize the system (\ref{presym}) by following the same procedures in \cite{alazard2}. Define 
\begin{equation}\label{transform}
U_1 = \,h + T_{\alpha} \,h, \quad U_2 = \Lambda [\omega + T_{\beta}\omega]\ta [T_{\sqrt{\lambda}-|\xi|^{1/2}}\omega] + \Lambda \omega, \quad U:= U_1 + i U_2,
\end{equation}
where
\be\label{taylorcoef}
\alpha= \sqrt{a}-1,\quad \quad \omega = \psi - T_{B(\,h)\psi}\,h, \quad
\beta := \sqrt{\lambda/|\xi|}-1 =\sqrt[4]{(1+|\nabla \,h|^2) - (\nabla\,h \cdot \xi/|\xi|)^2}-1.
\ee
Note that,
\[
\Lambda_{1}[ a ]= \Lambda_{1}[\p_t \Lambda^2 \psi] =- \Lambda^2 \,h, \quad \Lambda_1[\p_t  a] = -\Lambda^4 \psi,\quad \Lambda_1[\p_{t} \alpha] = -\h\Lambda^4 \psi,\quad \Lambda_1[\alpha] = -\h \Lambda^2 \,h.
\]

We   take the estimates of the Taylor coefficient in Lemma \ref{taylorcoefficient} as granted first. Then it is easy to see the following estimate and equivalence relations  hold,
\begin{equation}\label{eqn5900}
\| (U_1, U_2)- (\,h, \Lambda\psi)\|_{H^{k}}\lesssim \| (h, \Lambda \psi)\|_{\widetilde{W^{3}}} \|(\,h, \Lambda \psi)\|_{H^{k}}.
\end{equation}
\[
\p_t U_1\ta\Lambda^2 \omega+  [T_{\lambda-|\xi|} \omega] - T_{V}\cdot \nabla \,h  + T_{\p_t \alpha}\,h + T_{\alpha}[\p_t \,h]
\]
\[
\ta [\Lambda^2 -T_{|\xi|}] \omega+  T_{\lambda} \omega - T_{V}\cdot \nabla U_1 + T_{V}\cdot T_{\alpha} \nabla \,h  + T_{\alpha}\big[  T_{\lambda} \omega - T_{V}\cdot \nabla \,h \big]
\]
\[
\ta [\Lambda^2 -T_{|\xi|}] \Lambda^{-1} U_2 + T_{\lambda \sqrt{a}}\omega  - T_{V}\cdot \nabla U_1 
\ta \Lambda U_2 + [T_{\sqrt{ \lambda}\alpha} T_{\sqrt{\lambda}}\omega ]- T_{V}\cdot \nabla U_1
\]
\begin{equation}\label{equation200}
\ta \Lambda U_2 + [T_{\sqrt{\lambda}\alpha} U_2] - T_{V}\cdot \nabla U_1.
\end{equation}
\[
\p_t U_2 \ta \Lambda (1+T_{\beta}) [-T_{a} \,h - T_{V}\cdot \nabla \omega] + \Lambda T_{\p_t \beta} \omega 
\ta -\Lambda U_1 - [T_{\sqrt{\lambda}} T_{\alpha} U_1] - T_{V}\cdot \nabla T_{\sqrt{\lambda}} \omega + T_{\p_t \beta} \Lambda \omega\]
\begin{equation}\label{equation201}
\ta -\Lambda U_1 - [T_{\sqrt{\lambda}\alpha} U_1] - T_{V}\cdot \nabla U_2 + T_{\p_t \beta} U_2.
\end{equation}
Hence, the problematic terms in (\ref{eqq2345}) becomes the following terms (modulo good error terms),
\be\label{eqq2122}
\int_{\R^2} \p_{x}^{N} U_{1} \p_{x}^{N}[T_{\sqrt{\lambda}\alpha} U_2] - \p_{x}^{N}U_{2} \p_x^N   [T_{\sqrt{\lambda}\alpha} U_1], \quad \textup{where $N$ is the prescribed top derivative level.}
\ee
Therefore, we can move derivatives in (\ref{eqq2122}) around such that this cubic term doesn't loss derivative. See (\ref{eqnn9824}) for more details.

\subsection{Estimates of the Taylor coefficient} The main goal of this subsection is to obtain some basic estimates for the Taylor coefficient, which is a necessary part of the energy estimate.

\begin{lemma}\label{taylorcoefficient}
Under the smallness estimate \textup{(\ref{smallnessestimate})}, we have the following estimates for $\gamma\leq 3, \gamma_1\leq 2$,
\[
\|a-1 \|_{H^k}\lesssim \|\nabla\,h\|_{H^k} + \|(\,h, \nabla\psi)\|_{H^k}\|(\,h, \nabla\psi)\|_{\widetilde{W^2}},
\|a-1\|_{\widetilde{W^{\gamma}}}\lesssim \|\nabla\,h\|_{\widetilde{W^\gamma}} +  \big[ \| \nabla\psi\|_{\widetilde{W^{\gamma+1}}} + \|\nabla\,h\|_{\widetilde{W^{\gamma}}}  \big]^2,
\]
\[
\| \p_t a \|_{H^k}\lesssim \|\nabla \psi\|_{H^{k+1}} +\|(\,h, \nabla\psi)\|_{\widetilde{W^{3}}}  \|(\,h,\nabla\psi)\|_{H^{k+1}},
\|\p_t a \|_{\widetilde{W^{\gamma_1}}} \lesssim  \| \Lambda^2\psi \|_{\widetilde{W^{\gamma_1+1}}} +  \| (\nabla\,h, \nabla\psi)\|_{\widetilde{W^{\gamma_1+1}}}^2.
\]
\end{lemma}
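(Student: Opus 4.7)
The plan is to start from the defining identity $a - 1 = \p_t B + V\cdot \n B$, with $B$ and $V$ as in (\ref{hvderivative}), and convert every time derivative into a purely spatial expression by combining the equations (\ref{waterwave}) with the shape--derivative identity for the Dirichlet--Neumann operator,
\[
\p_t\bigl(G(\,h)\psi\bigr) \;=\; G(\,h)\p_t\psi \;-\; G(\,h)(B\,\p_t\,h) \;-\; \n\cdot\bigl(V\,\p_t\,h\bigr).
\]
Applied once, this identity expresses $\p_t B$ as a nonlinear functional of $(\,h,\psi)$ alone. The basic nonlinear bounds on $B$ and $V$ themselves are immediate from Proposition \ref{prop1} together with Lemma \ref{Sobolevestimate}: since $\Lambda_{1}[B]=\Lambda^2\psi$ and $V-\n\psi$ is already quadratic, one gets $\|B\|_{H^k}+\|V\|_{H^k}\lesssim \|\n\psi\|_{H^k}+\|\,h\|_{H^{k+1}}\|\n\psi\|_{\widetilde{W^0}}$ and analogous $\widetilde{W^\gamma}$ bounds.

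Next I would extract the linearized part of $a-1$. Since $\Lambda_{1}[V\cdot\n B]=0$ and $\Lambda_{1}[\p_t B]=\Lambda^2\,\Lambda_{1}[\p_t\psi]=-\Lambda^2\,h$, one has $\Lambda_{1}[a-1]=-\Lambda^2\,h=-|\n|\tanh|\n|\,h$. Because the multiplier $|\xi|\tanh|\xi|$ is dominated by $|\xi|$ uniformly on $\R^2$, this linear contribution gives exactly $\|\n\,h\|_{H^k}$ in the $H^k$ estimate and $\|\n\,h\|_{\widetilde{W^\gamma}}$ in the $\widetilde{W^\gamma}$ estimate. The remainder $a - 1 + \Lambda^2\,h$ is at least quadratic in $(\,h,\n\psi)$, and each such product I would split paradifferentially, placing the higher-regularity factor in $H^k$ (or $\widetilde{W^\gamma}$) and the other in an $L^\infty$-type norm, using Lemma \ref{Sobolevestimate} to translate bounds on $\n_{x,z}\varphi$ into bounds on $B,V$ and their derivatives. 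This produces the bilinear error $\|(\,h,\n\psi)\|_{H^k}\|(\,h,\n\psi)\|_{\widetilde{W^2}}$ in the $H^k$ bound, and the squared $L^\infty$-type norm in the $\widetilde{W^\gamma}$ bound.

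For $\p_t a$, I would differentiate the now-spatial expression for $a-1$ once more in $t$ and substitute the equations (\ref{waterwave}) again, invoking the shape-derivative identity a second time whenever $\p_t G(\,h)\psi$ reappears. The linear leading term then becomes
\[
\Lambda_{1}[\p_t a] \;=\; -\Lambda^2\,\Lambda_{1}[\p_t\,h] \;=\; -\Lambda^2\,\Lambda_{1}[G(\,h)\psi] \;=\; -\Lambda^4\psi,
\]
which accounts for $\|\n\psi\|_{H^{k+1}}$ in the $H^k$ bound and for $\|\Lambda^2\psi\|_{\widetilde{W^{\gamma_1+1}}}$ in the $\widetilde{W^{\gamma_1}}$ bound. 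The remaining cubic and quartic pieces are bilinear or trilinear in $(\,h,\n\psi)$ with one factor that always carries an extra derivative (from $\n B$, from the gradient falling on $G(\,h)$, or from the material derivative $\p_t+V\cdot\n$), and are handled by the same paraproduct splitting as above, giving the advertised quadratic remainders.

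The principal obstacle I anticipate is not analytic but organizational: after two rounds of $\p_t$-elimination the expression for $\p_t a$ contains a large collection of nonlinear terms, and for each of them one must check that the factor sent to $L^\infty$ is compatible with the low-frequency gain built into the norms $\widetilde{W^\gamma}$ and $\widehat{W^{\gamma,\alpha}}$, so that the estimates of Lemma \ref{Sobolevestimate} apply without any loss of derivative or low-frequency weight. Beyond this bookkeeping, no new ingredient is required: the proof is a direct combination of the composition and paraproduct rules recalled in Section \ref{preliminary} with the $L^2$- and $L^\infty$-type bounds on $\n_{x,z}\varphi$ and $\Lambda_{\geq 2}[\n_{x,z}\varphi]$ already established in Lemma \ref{Sobolevestimate}.
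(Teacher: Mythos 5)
Your approach is correct but genuinely different from the paper's.  The paper works one level deeper, at the velocity potential: it differentiates the fixed--point formulation (\ref{fixedpoint}) for $\nabla_{x,z}\varphi$ in $t$ to produce a new fixed--point formulation for $\nabla_{x,z}\partial_t\varphi$ (and then, iterating, for $\nabla_{x,z}\partial_t^2\varphi$), and runs the same contraction argument as in Lemma \ref{Sobolevestimate} to extract the $H^k$ and $\widetilde{W^\gamma}$ bounds; the equations (\ref{waterwave}) are only used at the end to convert $\partial_t h$, $\partial_t\psi$, $\partial_t^2 h$, $\partial_t^2\psi$ appearing in the nonlinearities $g_i(z)$ back to spatial quantities.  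You instead work at the level of the Dirichlet--Neumann operator itself, invoking the classical shape--derivative identity $\partial_t\bigl(G(h)\psi\bigr)=G(h)(\partial_t\psi - B\,\partial_t h)-\nabla\cdot(V\,\partial_t h)$ to eliminate time derivatives, and then substitute (\ref{waterwave}).  Both routes lead to the same estimates, and your identification of the linearized terms $\Lambda_1[a-1]=-\Lambda^2 h$ and $\Lambda_1[\partial_t a]=-\Lambda^4\psi$ matches the paper.  Two caveats worth flagging.  First, the shape--derivative identity is a nontrivial external input: it is standard (Lannes, Alazard--Burq--Zuily) but is neither proved nor cited in this paper, whereas the paper's route is self-contained given the machinery already built around (\ref{fixedpoint}).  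Second, the paper's method gives you, essentially for free, the precise factorization of the nonlinearities as the same $g_i(z)$ that are already controlled in Lemma \ref{Sobolevestimate}; in your route, after two applications of the shape--derivative identity the expression for $\partial_t a$ is a sizeable collection of bilinear/trilinear terms in $G(h)$, $B$, $V$ and their gradients, and verifying that each admits the paraproduct split with the correct low--frequency weight is real work that you defer to ``bookkeeping.''  The paper avoids that proliferation by keeping the time derivatives trapped inside a contraction estimate that has already been set up.  What your approach buys is a formulation that does not refer to the interior velocity potential $\varphi$ at all, which some readers may find conceptually cleaner.
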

\begin{proof}

Recall  (\ref{hvderivative}), (\ref{waterwave}), and
\[
a= 1 + V\cdot\nabla B + \p_t B,\quad B=\p_z \varphi/(1+\,h)\big|_{z=0}.
\]
 To estimate $a$ and $\p_t a$,  it's sufficient to estimate $\p_z\p_t\varphi$ and $\p_z\p_t^2\varphi $. From the fixed point type formulation of $\nabla_{x,z}\varphi$ in (\ref{fixedpoint}), we can derive the following equality, 
\[
\nabla_{x,z}\p_t \varphi = \Bigg[ \Big[ \frac{e^{-(z+1)\d}+ e^{(z+1)\d}}{e^{-\d} + e^{\d}}\Big]\nabla\p_t \psi ,  \frac{e^{(z+1)\d}- e^{-(z+1)\d}}{e^{-\d}+e^{\d}} \d \p_t\psi\Bigg] + [\mathbf{0}, \p_t g_1(z)]+ 
\]
\[
+\int_{-1}^{0} [K_1(z,s)-K_2(z,s)-K_3(z,s)](\p_t g_2(s)+\nabla \cdot \p_t g_3(s))  ds \]
\begin{equation}\label{eqn3300}
+\int_{-1}^{0} K_3(z,s)\d\textup{sign($z-s$)}\p_t g_1(s)  -\d [K_1(z,s) +K_2(z,s)]\p_t g_1(s)\, d  s.
\end{equation}
Following the same fixed point type argument that we used in the proof of  Lemma \ref{Sobolevestimate}, we can derive the following estimates,
\[
\| \nabla_{x,z}\p_t \varphi\|_{L^\infty_z \widetilde{W^\gamma}} \lesssim  \|\nabla\,h\|_{\widetilde{W^\gamma}} +   \| \p_t \,h\|_{\widetilde{W^{\gamma+1}}} \|\nabla_{x,z}\varphi\|_{L^\infty_z \widetilde{W^\gamma}}
\]
\begin{equation}\label{eqn3301}
 \lesssim
\|\nabla\,h\|_{\widetilde{W^\gamma}} +  \big[ \| \nabla\psi\|_{\widetilde{W^{\gamma+1}}} + \|\nabla\,h\|_{\widetilde{W^{\gamma}}}  \big]^2.
\end{equation}
\[
\| \nabla_{x,z}\p_t \varphi\|_{L^\infty_z H^k}\lesssim \| \nabla \p_t \psi \|_{H^{k}} + \|\,h\|_{H^{k+1}} \| \nabla_{x,z}\p_t \varphi\|_{\widetilde{W^1}} +\|\p_t \,h\|_{H^{k+1}} \|\nabla_{x,z}\varphi\|_{\widetilde{W^{1}}}
\]
\[ + \|\p_t\,h\|_{\widetilde{W^1}} \|\nabla_{x,z}\varphi\|_{L^\infty_z H^{k}}\lesssim \|\nabla\,h\|_{H^{k}} + \|(\,h, \nabla\psi)\|_{H^k}\|(\,h, \nabla\psi)\|_{\widetilde{W^2}}.
\]

We can take another time derivative at the both hands side of equation (\ref{eqn3300}) to derive a fixed point type formulation for $\nabla_{x,z}\p_t^2 \varphi$. Following a similar argument, we can derive the following estimate, 
\[
\|\nabla_{x,z}\p_t^2 \varphi\|_{L^\infty_z \widetilde{W^\gamma}} \lesssim \|\p_t^2\psi \|_{\widetilde{W^{\gamma +1}}} + \|\p_t \,h\|_{\widetilde{W^{\gamma +1}}} \| \p_t \nabla_{x,z}\varphi\|_{L^\infty_z \widetilde{W^\gamma} } + \|\p_t^2\,h \|_{\widetilde{W^{\gamma +1}}} \| \nabla_{x,z}\varphi\|_{L^\infty_z \widetilde{W^\gamma}}.
\]
Recall the system of equations satisfied by $\,h$ and $\psi$ in (\ref{waterwave}), we have
\[
\p_t^2 h = \p_t G(h)\psi = \p_t[(1+|\nabla h|^2) B -\nabla h \cdot \nabla \psi],
\]
\[
\p_t^2 \psi = -\p_t \,h + \nabla\psi \cdot\nabla\p_t \psi + (1+|\nabla\,h|^2) B \p_t B + \nabla\,h\cdot \nabla\p_t \,h B^2.
\]
Hence,
\[
\|\p_t^2\psi\|_{\widetilde{W^{\gamma+1}}} \lesssim \| \Lambda^2\psi \|_{\widetilde{W^{\gamma+1}}} + \big[ \| \nabla\,h\|_{\widetilde{W^{\gamma+1}}} + \|\nabla\psi\|_{\widetilde{W^{\gamma+1}}}\big]^2.
\]
Combining above estimate, (\ref{eqn3301}) and (\ref{eqn2201}) in Lemma \ref{Sobolevestimate}, we have
\[
\|\nabla_{x,z}\p_t^2 \varphi\|_{L^\infty_z \widetilde{W^\gamma}} \lesssim   \| \Lambda^2\psi \|_{\widetilde{W^{\gamma+1}}} + \big[ \| \nabla\,h\|_{\widetilde{W^{\gamma+1}}} + \|\nabla\psi\|_{\widetilde{W^{\gamma+1}}}\big]^2.
\]
Follow the same argument, we derive the $L^2$ type estimate of $\p_t^2\varphi$ as follows,
\[
\| \nabla_{x,z}\p_t^2 \varphi\|_{L^\infty_z H^k} \lesssim \| \p_t^2\psi\|_{H^{k+1}} + \|\p_t \,h\|_{H^{k+1}}\|\nabla_{x,z}\p_t\varphi\|_{L^\infty_z \widetilde{W^1}} 
\]
\[
+ \|\p_t \,h\|_{\widetilde{W^1}} \| \nabla_{x,z}\p_t\varphi\|_{L^\infty_z H^{k}}+ \|\p_t^2\,h\|_{H^{k}} \|\nabla_{x,z}\varphi\|_{L^\infty_z \widetilde{W^1}} + \|\p_t^2\,h\|_{\widetilde{W^1}} \|\nabla_{x,z}\varphi\|_{L^\infty_z H^k} 
\]
\[
+\|\,h\|_{H^{k+1}} \|\p_t^2 \nabla_{x,z}\varphi\|_{L^\infty_z \widetilde{W^1}} +\|\,h\|_{\widetilde{W^1}} \| \nabla_{x,z}\p_t^2 \varphi\|_{L^\infty_z H^k},
\]
which further gives us the following estimate, 
\[
\| \nabla_{x,z}\p_t^2 \varphi\|_{L^\infty_z H^k} \lesssim \|\Lambda^2 \psi\|_{H^{k+1}} + \|\nabla\,h\|_{\widetilde{W^2}} \|\Lambda^2\psi\|_{H^{k+1}} + \|\nabla\psi \|_{\widetilde{W^2}} \|\,h\|_{H^{k+1}}
\]
\[
\lesssim \|\nabla \psi\|_{H^{k+1}} + \|(\,h, \nabla\psi)\|_{\widetilde{W^{3}}}  \|(\,h,\nabla\psi)\|_{H^{k+1}}.
\]
Therefore, our desired estimates of the Taylor coefficient hold.

 \end{proof}

\section{Energy estimate}\label{energyestimatesection}
The main goal in this section is to prove our main Theorem \ref{maintheorem}.   Since the energy of $(U_1,U_2)$ is comparable with  the energy of $(\,h, \Lambda \psi)$ (see(\ref{eqn5900})), it is sufficient to do energy estimate for the system of equations satisfied by $U_1$ and $U_2$. Let $N_0$ be the prescribed top regularity level.  From (\ref{equation200}) and (\ref{equation201}), we know that  the system of equations satisfied by $(U_1,U_2)$ reads as follows, 
\begin{equation}\label{system4}
\left\{\begin{array}{l}
\p_t U_1 -\Lambda U_2 = T_{\sqrt{\lambda}\alpha} U_2 - T_{V\cdot}\nabla U_1 +\mathcal{R}_1, \\
\\
\p_t U_2 + \Lambda U_1 = -T_{\sqrt{\lambda}\alpha} U_1 - T_{V}\cdot \nabla U_2 + \mathcal{R}_2. \ \\
\end{array}\right.
\end{equation}
The precise formulations of good remainder terms $\mathcal{R}_1$ and $\mathcal{R}_2$ are not so important in the energy estimate part. From (\ref{equation200}) and (\ref{equation201}), we know that they are good error terms, i.e.,
\begin{equation}\label{remainderenergy}
\| \mathcal{R}_1\|_{H^{N_0}} + \| \mathcal{R}_2\|_{H^{N_0}} \lesssim_{N_0} \big[ \|(\,h, \Lambda\psi)\|_{\widehat{W^{4,\alpha}}}  + \|(\,h, \Lambda\psi)\|_{\widehat{W^4}}^2\big] \|(\eta,\Lambda \psi)\|_{H^{N_0}}.
\end{equation}
Define the energy of $U_1$ and $U_2$ as follows,
\begin{equation}\label{energyfunction}
E_{N_0}(t):=\h\Big[ \|U_1\|_{L^2}+\|U_2\|_{L^2} + \sum_{k+j=N_0, 0\leq k , j\in \mathbb{Z}} \| \p_1^{k}\p_2^{j} U_1\|_{L^2}^2 + \|\p_1^{k}\p_2^{j}  U_2\|_{L^2}^2\Big].
\end{equation}
From 
(\ref{system4}), we have
\[
\Big|\frac{d}{d t} E_{N_0}(t) \Big|\lesssim \| (U_1, U_2)\|_{H^{N_0}}\| (\mathcal{R}_1, \mathcal{R}_2)\|_{H^{N_0}}+ \sum_{k+j=N_0, 0\leq k , j\in \mathbb{Z}}  \Big|\int_{\R^2}\big[\p_1^{k}\p_2^{j}  U_1\p_1^{k}\p_2^{j} [-T_{V}\cdot \nabla U_1]  
\]
\[
+ \p_1^{k}\p_2^{j}  U_2\p_1^{k}\p_2^{j} [-T_{V}\cdot \nabla U_2]\big] d x \Big|+ \Big| \int_{\R^2} \p_1^{k}\p_2^{j}  U_1 \p_1^{k}\p_2^{j} ([ T_{\sqrt{\lambda}\alpha} U_2]) -\p_1^{k}\p_2^{j}  U_2 \p_1^{k}\p_2^{j} ([T_{\sqrt{\lambda}\alpha} U_1])\Big|
\]
\[
\lesssim_{N_0} \big[ \| \nabla V\| _{\widetilde{W}^{1}} + \|(\,h, \Lambda\psi)\|_{\widehat{W^{4,\alpha}}}  + \|(\,h, \Lambda\psi)\|_{\widehat{W^4}}^2 \big]\|(U_1, U_2)\|_{H^{N_0}}^2 + \mathcal{E}_{N_0},
\]
\begin{equation}\label{eqn811}
\lesssim_{N_0}  \big[  \|(\,h, \Lambda\psi)\|_{\widehat{W^{4, \alpha}}} + \| (\,h,\Lambda\psi)\|_{\widehat{W^{4}}}^2\big]\|(U_1, U_2)\|_{H^{N_0}}^2 + \mathcal{E}_{N_0},
\end{equation}
where
\[
\mathcal{E}_{N_0} =  \sum_{k+j=N_0, 0\leq k , j\in \mathbb{Z}}\Big|\int_{\R^2}\Big[\p_1^{k}\p_2^{j} U_1  T_{\sqrt{\lambda}\alpha} \p_1^{k}\p_2^{j} U_2 -\p_1^{k}\p_2^{j} U_2 T_{\sqrt{\lambda}\alpha}  \p_1^{k}\p_2^{j}  U_1 \Big]\Big|\]
\be\label{eqnn9824}
= \sum_{k+j=N_0, 0\leq k , j\in \mathbb{Z}}\Big|\int_{\R^2} \Big[ \p_1^{k}\p_2^{j} U_1 \big(T_{\sqrt{\lambda}\alpha}  - (T_{\sqrt{\lambda}\alpha})^{\ast}\big)\p_1^{k}\p_2^{j} U_2\Big]\Big|.
\ee
Recall that $\sqrt{\lambda}\alpha\in M^{1/2}_1$ is a symbol of order $1/2$. Note that it is real. Hence, from Lemma \ref{adjoint}, we know that the symbol of the adjoint operator is itself, and operator $(T_{\sqrt{\lambda}\alpha})^{\ast} - T_{\sqrt{\lambda} \alpha}$  is of order $-1/2$. As a result,    the following estimate holds, 
\begin{equation}\label{eqn808}
\mathcal{E}_{N_0}\lesssim M_{1}^{1/2}(\sqrt{\lambda}\alpha) \| (U_1, U_2)\|_{H^{N_0}}^2 \lesssim \| \nabla \,h\|_{\widetilde{W^2}}\|(U_1, U_2)\|_{H^{N_0}}^2.
\end{equation}
Combining above estimate with (\ref{eqn811}) and (\ref{eqn5900}), we have
\[
\Big| \frac{d}{d t} E_{N_0}(t) \Big| \lesssim_{N_0}   \big[  \|(\,h, \Lambda\psi)\|_{\widehat{W^{4, \alpha}}} + \| (\,h,\Lambda\psi)\|_{\widehat{W^{4}}}^2\big]\|(h, \Lambda \psi)\|_{H^{N_0}}^2.
\]

\section*{Appendix: Quadratic terms of the good remainders}\label{appendix}

In this section, we calculate explicitly the quadratics terms of the good reminder terms $\mathcal{R}_1$ and $\mathcal{R}_2$ 	to help readers to understand the fact that we can gain  one derivative  in the new energy estimate (\ref{energyestimate}) for the inputs of quadratic terms, which are putted  in the $L^\infty$ type space.     Recall (\ref{transform}) and (\ref{taylorcoef}). We have 
 \[
\Lambda_1[B]= \Lambda^2 \psi, \quad \Lambda_{1}[a]= \Lambda_{1}[\p_t B]= -\Lambda^2 h,\quad \Lambda_{1}[\alpha]= -\h \Lambda^2 h,\quad \Lambda_{1}[\beta]=0.
\]
Recall (\ref{system4}). By using above definitions, we can reduce the equations satisfied by $U_1$ and $U_2$ to the equations satisfied by $h$ and $\psi$ as follows,  
\begin{equation}\label{eqn2}
\left\{\begin{array}{l}
\p_t\,h -\Lambda^2\psi = \widetilde{Q}_1(\,h, \psi) + \Lambda_{2}[\mathcal{R}_1](\,h, \psi) + \textup{cubic and higher}_1, \\
\\
\p_t \Lambda \psi + \Lambda \,h = \widetilde{Q}_{2}(\,h, \psi) + \Lambda_{2}[\mathcal{R}_2](\,h, \psi) + \textup{cubic and higher}_2,\\
\end{array}\right.
\end{equation}
where
\[
\widetilde{Q}_1(\,h, \psi)=-\Lambda^2(T_{\Lambda^2\psi}\,h) + \h T_{\Lambda^2\,h}\Lambda^2\psi + \h T_{\Lambda^4\psi}\,h - \h(T_{\Lambda^2\,h} |\nabla|^{1/2}\Lambda \psi) - T_{\nabla \psi} \cdot \nabla \,h,
\]
\[
\widetilde{Q}_2(\,h, \psi)= \Lambda(T_{\Lambda^2\psi} \Lambda^2\psi-T_{\Lambda^2\,h}\,h) +\Lambda (\h T_{\Lambda^2\,h}\,h) +\h ( T_{\Lambda^2\,h}\d^{1/2}\,h) - T_{\nabla\psi}\cdot \nabla \Lambda \psi.
\]
Recall   (\ref{waterwave}) and (\ref{eqq345}) in Lemma \ref{quadraticterms}. We   have
\[
\Lambda_2[\mathcal{R}_1](h, \psi) = \Lambda_{2}[G(h)\psi]- \widetilde{Q}_{1}(h, \psi)= -\nabla\cdot\big( T_{ h}\nabla \psi \big)- \nabla\cdot\mathcal{R}( h, \nabla \psi) -T_{\Delta^2 \psi} h -\Lambda^{2}(T_{ h } \Lambda^{2}\psi )  
\]
\be\label{eqqqn1234}
- \Lambda^{2}\mathcal{R}(h, \Lambda^2 \psi)+ \frac{1}{2} T_{\Lambda^2 h} \Lambda (\Lambda-\d^{1/2})\psi -  \h T_{\Lambda^4\psi}\,h,
\ee
\[
\Lambda_{2}[\mathcal{R}_2](h, \psi) = \Lambda\big[ -\frac{1}{2} |\nabla\psi |^2 + \h |\Lambda^2\psi|^2 \big]- \widetilde{Q}_2(h, \psi) = \Lambda(-T_{\nabla \psi}\cdot \nabla\psi) + T_{\nabla\psi} \cdot \nabla\Lambda  \psi 
\]
\be\label{eqnnqn1223}
+ \frac{1}{2}\big(  -\Lambda (T_{\Lambda^2 h }  h)+ T_{\Lambda^2 h} \d^{1/2} h    \big)  -\h\Lambda\mathcal{R}(\nabla \psi, \nabla \psi) + \h \Lambda\mathcal{R}(\Lambda^2 \psi, \Lambda^2 \psi).
\ee
Note that 
\be\label{difference}
\Lambda-\d^{1/2}= \d^{1/2}( \sqrt{\tanh\d}-1)= \frac{-2 e^{-\d}\d^{1/2}}{(\sqrt{\tanh\d}+1)(e^{\d}+e^{-\d})}.
\ee
Now, it is easy to see that  $\Lambda_{2}[\mathcal{R}_2](h, \psi)$ and  $\Lambda_{2}[\mathcal{R}_2](h, \psi)$ don't lose derivatives. It remains to check  that we can gain one derivative in the $L^\infty$ type space. From (\ref{eqqqn1234}) and (\ref{eqnnqn1223}), it would be sufficient to check the following  term ,
\be\label{bulkterms}
-\nabla\cdot \big(T_{ h }  \nabla \psi \big)  -\Lambda^{2}(T_{ h } \Lambda^{2}\psi ).
\ee
The corresponding symbol of above quadratic terms is given as follows,
\[
\xi\cdot \eta -|\xi||\eta|\tanh |\xi| \tanh|\eta|,\quad |\xi-\eta|\ll |\xi|\sim |\eta|.
\]
We decompose this symbol  into two parts as follows, 
\[
p_1(\xi-\eta, \eta)=\xi\cdot \eta - |\xi||\eta|=-\h |\xi-\eta|^2  + \h |\xi|^2+ \h|\eta|^2-|\xi||\eta|=-\h |\xi-\eta|^2 + \h(|\xi|-|\eta|)^2,
\]
\[
  p_2(\xi-\eta, \eta) = |\xi||\eta| \big(1-\tanh|\xi|\tanh|\eta| \big).
\]
Now, it is clear that the first part of (\ref{bulkterms}), which is determined by $p_1(\xi-\eta, \eta)$, doesn't lose derivative and gain two derivatives for `` $h$''. For the second part of (\ref{bulkterms}), which is determined by $p_2(\xi-\eta, \eta)$, we can lower the regularity to $L^2$ such that we can put ``$  \psi$''   in $L^\infty$ and`` $h$'' in $L^2$. As a result, we can always gain one derivative for the inputs of quadratic terms that are putted in $L^\infty$ type space.

\end{document}